\newtheorem{thm}{Theorem}[section]
\newtheorem{lem}[thm]{Lemma}
\newtheorem{pro}[thm]{Proposition}
\newtheorem{rmk}[thm]{Remark}
\newtheorem{defi}[thm]{Definition}
\newcommand {\emptycomment}[1]{}
\newcommand{\lon }{\,\rightarrow\,}
\newcommand{\be }{\begin{equation}}
\newcommand{\ee }{\end{equation}}
\newcommand{\g}{\mathfrak g}
\newcommand{\h}{\mathfrak h}
\newcommand{\huaG}{\mathcal{G}}
\newcommand{\huaC}{{\mathcal{C}}}
\newcommand{\huaD}{\mathcal{D}}
\newcommand{\Id}{\rm{Id}}
\newcommand{\br}[1]{   [ \cdot,    \cdot  ]   }
\newcommand{\dM}{\mathrm{d}}
\newcommand{\Hom}{\mathrm{Hom}}
\newcommand{\gl}{\mathfrak {gl}}
\newcommand{\Ker}{\mathrm{Ker}}
\newcommand{\Img}{\mathrm{Im}}
\newcommand{\sgn}{\mathrm{sgn}}
\newcommand{\reg}{\mathrm{reg}}
\begin{document}

\title[]{Cohomologies of pre-LieDer pairs and applications}

\author{Shanshan Liu}
\address{School of Mathematics and Statistics, Northeast Normal University, Changchun 130024, Jilin, China}
\email{shanshanmath@163.com}

\author{Liangyun Chen*}
\address{School of Mathematics and Statistics, Northeast Normal University, Changchun 130024, China}
\email{chenly640@nenu.edu.cn}


\begin{abstract}
In this paper,  we use the higher derived bracket to give the controlling algebra of pre-LieDer pairs. We give the cohomology of pre-LieDer pairs by using the twist $L_\infty$-algebra of this controlling algebra. In particular, we define the cohomology of regular pre-LieDer pairs. We study infinitesimal deformations of pre-LieDer pairs, which are characterized by the second cohomology group of pre-LieDer pairs. We also define the cohomology of regular pre-LieDer pairs with coefficients in arbitrary representation and using the second cohomology group to classify abelian extensions of regular pre-LieDer pairs.
\end{abstract}

\renewcommand{\thefootnote}{\fnsymbol{footnote}}
\footnote[0]{* Corresponding author}
\keywords{pre-LieDer pair, $L_{\infty}$-algebra, $V$-data, cohomology, deformation, extension }
\footnote[0]{{\it{MSC 2020}}: 17A36, 17A40, 17B10, 17B40, 17B60, 17B63, 17D25}

\maketitle
\vspace{-5mm}
\tableofcontents

\allowdisplaybreaks


\section{Introduction}
Derivations play an important role in the study of various algebraic structures. Derivations can be used to construct higher derived bracket and homotopy Lie algebras \cite{TTV}, deformation formulas \cite{VEC} and difffferential Galois theory \cite{ARM}. They are also important in control theory \cite{VA1,VA2}, functional analysis \cite{AFB1,AFB2}. Moreover, they also play a fundamental role in the study of  gauge theories in quantum field theory via the BV-formalism \cite{IAB}. In \cite{MD,LJ}, the authors studied the operad of associative algebras with derivation. Recently, in \cite{TR} the authors study the cohomology, extensions and deformations of Lie algebras with derivations. Using similar ideas, the author study associative algebras with derivations \cite{AD1} and Leibniz algebras with derivations \cite{AD2}.

The notion  of a pre-Lie algebra (also called  left-symmetric algebras, quasi-associative algebras, Vinberg algebras and so on) has been introduced independently  by M. Gerstenhaber in  deformation theory  of rings and algebras \cite{Gerstenhaber63}. Pre-Lie algebra arose from the study of affine manifolds and affine structures on Lie group \cite{JLK}, homogeneous convex  cones \cite{Vinberg}. Its defining identity is weaker than associativity. This algebraic structure describes some properties of cochains space in Hochschild cohomology of an associative algebra, rooted trees and vector fields on affine spaces.
 Moreover, it is playing an increasing role in algebra, geometry and physics due to their applications in nonassociative algebras, combinatorics,  numerical Analysis and quantum field theory, see also in \cite{BD,Bai,Bai2,CK}. There is a close relationship between pre-Lie algebras and Lie algebras: a pre-Lie algebra $(\g,\cdot)$ gives rise to a Lie algebra $(\g,[\cdot,\cdot]_C)$ via the commutator bracket, which is called the subadjacent Lie algebra and denoted by ${\g}^C$. Furthermore, the map $L:\g\longrightarrow\gl(\g)$, defined by $L_xy=x\cdot y$ for all $x,y\in \g$, gives rise to a representation of the subadjacent Lie algebra ${\g}^C$ on $\g$.

The purpose of the paper is to study the cohomology of a pre-LieDer pair and its applications. The motivation for such a study due to the study of relative difference Lie algebras and relative Rota-Baxter Lie algebras \cite{Jiang1,Jiang2,LA}. We use the higher derived bracket to give the controlling algebra of pre-LieDer pairs. Using the Maurer-Cartan element of this controlling algebra, we construct a twist $L_\infty$-algebra. We give the cohomology of pre-LieDer pairs by using this twist $L_\infty$-algebra and study infinitesimal deformations of pre-LieDer pairs. We also define the cohomology of regular pre-LieDer pairs with coefficients in arbitrary representation and study extensions of regular pre-LieDer pairs.

The paper is organized as follows. In Section \ref{sec:controlling-derivation}, first, we give the notion of pre-LieDer pairs. Then, by a given representation $(V;\rho,\mu)$ of a pre-Lie algebra $(\g,\pi)$, we recall the graded Lie algebra whose Maurer-Cartan elements are $\pi+\rho+\mu$. Finally, we use this Maurer-Cartan element to give the cohomology of pre-Lie algebras with representations. In Section \ref{sec:deformation}, first, we recall the notion of $L_{\infty}$-algebras and higher derived brackets. Then, we use higher derived brackets to construct an $L_{\infty}$-algebra, whose  Maurer-Cartan elements are pre-LieDer pairs. Finally, using this Maurer-Cartan element, we construct a twist $L_\infty$-algebra, which controls deformations of pre-LieDer pairs.  In Section \ref{sec:infinitesimal-deformation}, first, using the twist $L_{\infty}$-algebra constructed from last section, we give the cohomology of pre-LieDer pairs. Then, we study infinitesimal deformations of pre-LieDer pairs by using this cohomology. We show that equivalent infinitesimal deformations are in the same second cohomology group. Finally, we use this cohomology to give the  cohomology of regular pre-LieDer pairs. In Section \ref{sec:abelian-extension}, first, we introduce the notion of representations of regular pre-LieDer pairs. Then, we give cohomologies of regular pre-LieDer pairs with coefficients in an arbitrary representation. Finally, we deal with abelian extensions of regular pre-LieDer pairs. We show that the second cohomology group classifies abelian extensions of  regular pre-LieDer pairs.

In this paper, we work over an algebraically closed field $\mathbb{K}$ of characteristic $0$ and all the vector spaces are over $\mathbb{K}$.
\vspace{2mm}
\noindent
\section{Cohomologies of pre-Lie algebras with representations}\label{sec:controlling-derivation}
In this section, we define a pre-LieDer pair, which consists of a pre-Lie algebra $\g$ and a derivation $D$. We give a representation $(V;\rho,\mu)$ of a pre-Lie algebra $(\g,\pi)$ and recall the graded Lie algebra whose Maurer-Cartan elements are $\pi+\rho+\mu$. Then we use this Maurer-Cartan element to give the cohomology of pre-Lie algebras with representations.
\begin{defi}{\rm(\cite{BD})}
A {\bf pre-Lie algebra} $(\g,\cdot)$ is a vector space $\g$ equipped with a bilinear product $\cdot:\g\otimes \g\longrightarrow \g$, such that for all $x,y,z\in \g$, the following equality is satisfied:
\begin{equation*}
(x\cdot y)\cdot z-x\cdot (y\cdot z)=(y\cdot x)\cdot z-y\cdot (x\cdot z).
\end{equation*}
\end{defi}
Let $(\g,\cdot)$ be a pre-Lie algebra. The commutator $[x,y]_C=x\cdot y-y\cdot x$ gives a Lie algebra $(\g,[\cdot,\cdot]_C)$, which is denoted by ${\g}^C$ and called  the {\bf sub-adjacent Lie algebra} of $(\g,\cdot)$.
\begin{defi}{\rm(\cite{Bai})}
 A {\bf representation} of a pre-Lie algebra $(\g,\cdot)$ on a vector space $V$ consists of a pair $(\rho,\mu)$, where $\rho:\g\longrightarrow \gl(V)$ is a representation of the sub-adjacent Lie algebra ${\g}^C$ on $V$, and $\mu:\g\longrightarrow \gl(V)$ is a linear map, such that for all $x,y\in \g$:
\begin{equation*}
\mu(y)\circ\mu(x)-\mu(x\cdot y)=\mu(y)\circ\rho(x)-\rho(x)\circ\mu(y).
\end{equation*}
\end{defi}
We denote a representation of a pre-Lie algebra $(\g,\cdot)$ by $(V;\rho,\mu)$. Furthermore, let $L,R:\g\longrightarrow \gl(\g)$ be linear maps, where $L_xy=x\cdot y, R_xy=y\cdot x$. Then $(\g;L,R)$ is also a representation, which is called the regular representation.
\begin{defi}
Let $(V;\rho,\mu)$ be a representation of a pre-Lie algebra $(\g,\cdot)$ and $D:\g\longrightarrow V$ a linear map such that for all $x,y\in \g$
\begin{equation*}
D(x\cdot y)=\rho(x)D(y)+\mu(y)D(x).
\end{equation*}
Then $D$ is a {\bf derivation} of the pre-Lie algebra $(\g,\cdot)$ with respect to the representation $(V;\rho,\mu)$. A pre-Lie algebra $(\g,\cdot)$ with $D$ is called a {\bf pre-LieDer pair}, which is denoted by $(\g,D,\rho,\mu)$. In particular, let $(\g;L,R)$ be a regular representation, then $(\g,D,L,R)$ is called a regular pre-LieDer pair and simply denoted by $(\g,D)$.
\end{defi}

\begin{defi}
Let $(\g,D,\rho,\mu)$ and $(\g',D',\rho',\mu')$ be two pre-LieDer pairs. A {\bf morphiam} $(f_{\g},f_{V})$ from $(\g,D,\rho,\mu)$ to $(\g',D',\rho',\mu')$ consists of a pre-Lie algebra morphism $f_{\g}:\g\longrightarrow \g'$ and a linear map $f_{V}:V\longrightarrow V'$ such that for all $x\in \g$
\begin{eqnarray}
\label{mor-1}f_{V}\circ \rho(x)&=&\rho'(f_{\g}(x))\circ f_{V},\\
\label{mor-2}f_{V}\circ \mu(x)&=&\mu'(f_{\g}(x))\circ f_{V},\\
\label{mor-3}f_{V}\circ D&=&D'\circ f_{\g}.
\end{eqnarray}
\end{defi}

Let $(V;\rho,\mu)$ be a representation of a pre-Lie algebra $(\g,\cdot)$. The set of $n$-cochains is given by
\begin{equation*}
 C^n(\g;V)=\Hom(\wedge^{n-1} \g\otimes \g,V),\quad \forall n\geq 1.
\end{equation*}
For all $f \in C^n(\g;V),~ x_1,\dots,x_{n+1} \in \g$, define the coboundary operator
$\dM:C^n(\g;V)\longrightarrow C^{n+1}(\g;V)$ by
\begin{eqnarray*}
(\dM f)(x_1,\dots,x_{n+1})&=&\sum_{i=1}^n(-1)^{i+1}\rho(x_i)f(x_1,\dots,\hat{x_i},\dots,x_{n+1})\\
&&+\sum_{i=1}^n(-1)^{i+1}\mu(x_{n+1})f(x_1,\dots,\hat{x_i},\dots,x_n,x_i)\\
&&-\sum_{i=1}^n(-1)^{i+1}f(x_1,\dots,\hat{x_i}\dots,x_n,x_i\cdot x_{n+1})\\
&&+\sum_{1\leq i<j\leq n}(-1)^{i+j} f([x_i,x_j]_C,x_1,\dots,\hat{x_i},\dots,\hat{x_j},\dots,x_{n+1}).
\end{eqnarray*}

\begin{thm}{\rm(\cite{DA})}
With the above notation, $(\oplus_{n=1}^{+\infty}C^n(\g;V),\dM)$ is a chain complex.
\end{thm}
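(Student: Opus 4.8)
The content of the theorem is that $\dM\circ\dM=0$, so that $(\oplus_{n\geq 1}C^n(\g;V),\dM)$ is indeed a chain complex; everything reduces to verifying this single square-zero identity.

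The plan is to avoid a head-on expansion of $\dM^2$ and instead exploit the graded Lie algebra recalled just above, whose Maurer--Cartan elements encode representations of pre-Lie algebras. Write $\Theta=\pi+\rho+\mu$ for the Maurer--Cartan element attached to $(\g,\cdot)$ together with $(V;\rho,\mu)$. The first step is to identify the explicit coboundary operator displayed above with the adjoint action $\dM=[\Theta,-]$, up to the sign and degree-shift conventions of that bracket. Concretely, the four groups of terms in $(\dM f)(x_1,\dots,x_{n+1})$ are to be matched as follows: the $\rho$-term and the $\mu$-term arise from bracketing $f$ against the two representation components $\rho$ and $\mu$ of $\Theta$, while the term involving $x_i\cdot x_{n+1}$ and the term involving $[x_i,x_j]_C$ arise from bracketing $f$ against the product $\pi$. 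Once this identification is in place, the conclusion is immediate: since $\Theta$ satisfies the Maurer--Cartan equation $[\Theta,\Theta]=0$, the graded Jacobi identity gives
\begin{equation*}
\dM^2=[\Theta,[\Theta,-]]=\tfrac12[[\Theta,\Theta],-]=0.
\end{equation*}

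I expect the main obstacle to be the identification $\dM=[\Theta,-]$ itself, not the final cancellation. The subtlety is that the cochain space $\Hom(\wedge^{n-1}\g\otimes\g,V)$ has a distinguished last argument and is skew only in its first $n-1$ slots, so the ambient graded Lie algebra is not the plain Nijenhuis--Richardson algebra on a fully skew space but an adapted version reflecting this asymmetry. Tracking the signs $(-1)^{i+1}$, the placement of $x_i$ into the last slot in the $\mu$- and $\pi$-terms, and the passage from $\rho$ to the commutator bracket $[\cdot,\cdot]_C$ in the final sum all demand care; these are exactly the bookkeeping details that certify that the displayed formula is the component expression of $[\Theta,-]$.

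Should the bracket identification prove cumbersome, the fallback is a direct verification: expand $\dM(\dM f)$, sort the resulting monomials by type (two copies of $\rho$; one $\rho$ and one $\mu$; a product $x_i\cdot x_j$ inserted inside $f$; and so on), and check that each type cancels separately. The inputs to these cancellations are precisely the three structural axioms available, namely the pre-Lie identity for $(\g,\cdot)$, the fact that $\rho$ is a representation of the sub-adjacent Lie algebra $\g^C$, and the compatibility identity $\mu(y)\circ\mu(x)-\mu(x\cdot y)=\mu(y)\circ\rho(x)-\rho(x)\circ\mu(y)$ defining the representation $(\rho,\mu)$; no property of $D$ enters here, since $D$ plays no role in this particular cohomology.
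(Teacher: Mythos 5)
Your proposal is correct, and it is essentially the argument this paper implicitly endorses: the theorem itself is only quoted from the literature with no proof given, but the machinery developed immediately afterwards (Proposition~2.11 identifying $\pi+\rho+\mu$ as a Maurer--Cartan element of $(\oplus_{k}C^{k|0}(\g\oplus V,\g\oplus V),[\cdot,\cdot]^{MN})$, and later, in Section~4, the explicit identification $\dM\theta=(-1)^{n-2}[\pi+\rho+\mu,\theta]^{MN}$ for $\theta\in\Hom(\wedge^{n-2}\g\otimes\g,V)$) is exactly your route. One refinement worth making explicit: you worry that the ambient algebra is ``not the plain Nijenhuis--Richardson algebra'' because of the distinguished last argument, but the paper's lifting formalism already resolves this --- the cochain space $C^n(\g;V)=\Hom(\wedge^{n-1}\g\otimes\g,V)$ is precisely the bidegree component $C^{n|-1}(\g\oplus V,\g\oplus V)$ of the Matsushima--Nijenhuis graded Lie algebra on $\g\oplus V$, and Lemma~2.9 on bidegrees guarantees that $[\pi+\rho+\mu,\cdot]^{MN}$ maps $C^{n|-1}$ to $C^{n+1|-1}$, so the adjoint action genuinely restricts to $\oplus_n C^n(\g;V)$. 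With that in place your term-by-term matching (the $\rho$- and $\mu$-sums from post-composition with the representation components, the $x_i\cdot x_{n+1}$ and $[x_i,x_j]_C$ sums from inserting $\pi$ into $f$) is the standard verification, and $\dM^2=\tfrac12[[\,\pi+\rho+\mu,\pi+\rho+\mu\,]^{MN},\cdot\,]^{MN}=0$ follows from Proposition~2.11 and graded Jacobi exactly as you say; the overall sign $(-1)^{n-1}$ relating $\dM$ to the bracket is harmless for square-zero. Your closing remark that $D$ plays no role here is also correct.
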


\begin{defi}{\rm(\cite{DA})}
The cohomology of the cochain complex $(\oplus_{n=1}^{+\infty}C^n(\g;V),\dM)$ is called the cohomology of the pre-Lie algebra $(\g,\cdot_{\g})$ with coefficients in the representation $(V;\rho,\mu)$. The corresponding cohomology group is denoted by $H^n(\g;V)$.
\end{defi}
\begin{rmk}
We use $\dM_{\reg}$ to denote the coboundary operator of $(\g,\cdot)$ with the coefficients in the regular representation.
\end{rmk}

A permutation $\sigma\in \mathbb{S}_n$ is called an $(i,n-i)$-unshuffle if $\sigma(1)<\dots<\sigma(i)$ and $\sigma(i+1)<\dots<\sigma(n)$. If $i=0$ and $i=n$, we assume $\sigma=\Id$. The set of all $(i,n-i)$-unshuffles will be denoted by $\mathbb{S}_{(i,n-i)}$. The notion of an $(i_1,\dots,i_k)$-unshuffle and the set $\mathbb{S}_{(i_1,\dots,i_k)}$ are defined similarly.

Let $\g$ be a vector space. We consider the graded vector space $C^*(\g;\g)=\oplus_{n=1}^{+\infty}C^n(\g;\g)=\oplus_{n=1}^{+\infty}\Hom(\wedge^{n-1}\g\otimes \g,\g)$. It was shown in \cite{FC,AN,QW} that $C^*(\g;\g)$ equipped with the Matsushima-Nijenhuis bracket
\begin{equation*}
[P,Q]^{MN}=P\circ Q-(-1)^{pq}Q\circ P,\quad \forall P\in C^{p+1}(\g;\g), Q\in C^{q+1}(\g;\g)
\end{equation*}
ia a graded Lie algebra, where $P\circ Q\in C^{p+q+1}(\g;\g)$ is defined by
\begin{eqnarray*}
 &&P\circ Q(x_1,\dots,x_{p+q+1})\\
&=&\sum_{\sigma\in \mathbb{S}(q,1,p-1)}\sgn(\sigma)P(Q(x_{\sigma(1)},\dots,x_{\sigma(q)},x_{\sigma(q+1)}),x_{\sigma(q+2)},\dots,x_{\sigma(p+q)},x_{p+q+1})\\
&&+(-1)^{pq}\sum_{\sigma\in \mathbb{S}(p,q)}\sgn(\sigma)P(x_{\sigma(1)},\dots,x_{\sigma(p)},Q(x_{\sigma(p+1)},\dots,x_{\sigma(p+q)},x_{p+q+1})).
\end{eqnarray*}
In particular, $\pi\in \Hom(\otimes^2\g,\g)$ defines a pre-Lie algebra if and only if $[\pi,\pi]^{MN}=0$. If $\pi$ is a pre-Lie algebra structure, then $d_{\pi}:=[\pi,\cdot]^{MN}$ is a graded derivation of the graded Lie algebra $(C^*(\g;\g),[\cdot,\cdot]^{MN})$ satisfying $d_{\pi}\circ d_{\pi}=0$, so that $(C^*(\g;\g),[\cdot,\cdot]^{MN},d_{\pi})$ becomes a differential graded Lie algebra.

Let $\g_1$ and $\g_2$ be vector spaces and elements in $\g_1$ will be denoted by $x,y,x_i$ and elements in $\g_2$ will be denoted by $u,v,v_i$. Let $c:\wedge^{n-1}\g_1\otimes \g_1\longrightarrow \g_2$ be a linear map. We can construct a linear map $\hat{c}\in C^n(\g_1\oplus \g_2,\g_1\oplus \g_2)$ by
\begin{equation*}
\hat{c}(x_1+v_1,\dots,x_n+v_n):=c(x_1,\dots,x_n).
\end{equation*}
In general, for a given linear map $f:\wedge^{k-1}\g_1\otimes\wedge^l\g_2\otimes\g_1\longrightarrow\g_j$ for $j\in \{1,2\}$, we define a linear map $\hat{f}\in C^{k+l}(\g_1\oplus \g_2,\g_1\oplus \g_2)$ by
\begin{equation*}
\hat{f}(x_1+v_1,\dots,x_{k+l}+v_{k+l})=\sum_{\sigma\in \mathbb{S}(k-1,l)}\sgn(\sigma)f(x_{\sigma(1)},\dots,x_{\sigma(k-l)},v_{\sigma(k)},\dots,v_{\sigma(k+l-1)},x_{k+l}).
\end{equation*}
Similarly, for $f:\wedge^k\g_1\otimes\wedge^{l-1}\g_2\otimes\g_2\longrightarrow\g_j$ for $j\in \{1,2\}$, we define a linear map $\hat{f}\in C^{k+l}(\g_1\oplus \g_2,\g_1\oplus \g_2)$ by
\begin{equation*}
\hat{f}(x_1+v_1,\dots,x_{k+l}+v_{k+l})=\sum_{\sigma\in \mathbb{S}(k,l-1)}\sgn(\sigma)f(x_{\sigma(1)},\dots,x_{\sigma(k)},v_{\sigma(k+1)},\dots,v_{\sigma(k+l-1)},v_{k+l}).
\end{equation*}
We call the linear map $\hat{f}$ a {\bf horizontal lift} of $f$, or simply a lift.
We define $\huaG^{k,l}=\wedge^{k-1}\g_1\otimes\wedge^l\g_2\otimes\g_1+\wedge^k\g_1\otimes\wedge^{l-1}\g_2\otimes\g_2$. The vector space $\wedge^{n-1}(\g_1\oplus\g_2)\otimes(\g_1\oplus\g_2)$ is isomorphic to the direct sum of  $\huaG^{k,l}, k+l=n$.
\begin{defi}{\rm(\cite{Liu})}
A linear map $f\in \Hom(\wedge^{n-1}(\g_1\oplus\g_2)\otimes(\g_1\oplus\g_2),\g_1\oplus\g_2)$ has a {\bf bidegree} $k|l$ if the following four conditions hold:
\begin{itemize}
\item [$\rm(i)$]  $k+l+1=n$;
\item[$\rm(ii)$] If X is an element in $\huaG^{k+1,l}$, then $f(X)\in \g_1$;
\item[$\rm(iii)$] If X is an element in $\huaG^{k,l+1}$, then $f(X)\in \g_2$;
\item[$\rm(iv)$] All the other case, $f(X)=0$.
\end{itemize}
We denote a linear map $f$ with bidegree $k|l$ by $||f||=k|l$.
\end{defi}
We call a linear map $f$ {\bf homogeneous} if $f$ has a bidegree. We denote the set of homogeneous linear maps of bidegree $k|l$ by $C^{k|l}(\g_1\oplus \g_2,\g_1\oplus \g_2)$. We have $k+l\geq 0,k,l\geq -1$  because $n\geq 1$ and $k+1,l+1\geq 0$.

In our later study, the subspaces $C^{k|0}(\g_1\oplus \g_2,\g_1\oplus \g_2)$ and $C^{l|-1}(\g_1\oplus \g_2,\g_1\oplus \g_2)$ will be frequently used. By the above lift, we have the following isomorphisms:
\begin{eqnarray*}
C^{k|0}(\g_1\oplus \g_2,\g_1\oplus \g_2)&\cong& \Hom(\wedge^k\g_1\otimes\g_1,\g_1)\oplus\Hom(\wedge^k\g_1\otimes\g_2,\g_2)\oplus\Hom(\wedge^{k-1}\g_1\otimes\g_2\otimes\g_1,\g_2);\\
C^{l|-1}(\g_1\oplus \g_2,\g_1\oplus \g_2)&\cong& \Hom(\wedge^{l-1}\g_1\otimes\g_1,\g_2).
\end{eqnarray*}
\begin{lem}{\rm(\cite{Liu})}\label{bidegree-1}
If $||f||=k_f|l_f$ and $||g||=k_g|l_g$, then $[f,g]^{MN}$ has the bidegree $k_f+k_g|l_f+l_g$.
\end{lem}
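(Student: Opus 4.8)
The plan is to recast the bidegree as a weight-lowering condition for an auxiliary $\mathbb{Z}^2$-grading on cochains, after which the lemma reduces to conservation of weight under composition. Assign to $\g_1$ the weight $(1,0)$ and to $\g_2$ the weight $(0,1)$, and extend additively to pure tensors, so that a tensor with $a$ factors from $\g_1$ and $b$ factors from $\g_2$ (in any arrangement of slots, counting the distinguished last slot) carries weight $(a,b)$. The first step is to observe that $\huaG^{k,l}$ is exactly the weight-$(k,l)$ part of $\wedge^{k+l-1}(\g_1\oplus\g_2)\otimes(\g_1\oplus\g_2)$: both summands $\wedge^{k-1}\g_1\otimes\wedge^{l}\g_2\otimes\g_1$ and $\wedge^{k}\g_1\otimes\wedge^{l-1}\g_2\otimes\g_2$ contain precisely $k$ factors from $\g_1$ and $l$ from $\g_2$. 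Consequently $||f||=k|l$ holds if and only if $f$ sends each homogeneous input of weight $(k+1,l)$ into $\g_1$ (weight $(1,0)$), each input of weight $(k,l+1)$ into $\g_2$ (weight $(0,1)$), and every other homogeneous input to $0$. In both nonzero cases the output weight equals the input weight minus $(k,l)$, so the content of bidegree $k|l$ is exactly that $f$ is homogeneous of weight $-(k,l)$, the arity condition $k+l+1=n$ merely recording the total number of slots.

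Next I would verify that $P\circ Q$ lowers weight by $(k_P+k_Q,\,l_P+l_Q)$. Writing $p=k_P+l_P$ and $q=k_Q+l_Q$ for the reduced arities, fix a homogeneous input $X$ of weight $(a,b)$. Every summand of $P\circ Q(X)$ feeds $q+1$ of the slots to $Q$ and the remaining $p$ slots to $P$; let $(\alpha,\beta)$ be the weight of the slots sent to $Q$. Such a summand survives only when $Q$ is nonzero there, which forces $(\alpha,\beta)\in\{(k_Q+1,l_Q),(k_Q,l_Q+1)\}$ and produces a single element of weight $c_Q=(1,0)$, respectively $c_Q=(0,1)$. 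The input that $P$ then receives, namely this element together with the $p$ untouched slots, carries total weight $(a,b)-(\alpha,\beta)+c_Q=(a-k_Q,\,b-l_Q)$ in \emph{both} cases. Hence $P$ is evaluated on weight $(a-k_Q,b-l_Q)$, and it is nonzero precisely when this equals $(k_P+1,l_P)$ (output in $\g_1$) or $(k_P,l_P+1)$ (output in $\g_2$); these force $(a,b)=(k_P+k_Q+1,\,l_P+l_Q)$ or $(a,b)=(k_P+k_Q,\,l_P+l_Q+1)$ respectively.

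Because the resulting output space is determined by $(a,b)$ alone and not by the chosen unshuffle, all surviving summands land in the same space, so $P\circ Q(X)$ is unambiguously $\g_1$-valued or $\g_2$-valued exactly at these two weights and vanishes otherwise; together with the arity count $(k_P+l_P)+(k_Q+l_Q)+1$ this reads off as $||P\circ Q||=k_P+k_Q\,|\,l_P+l_Q$. The same computation with $P$ and $Q$ interchanged gives $||Q\circ P||=k_P+k_Q\,|\,l_P+l_Q$ as well. Since the cochains of a fixed bidegree form a linear subspace (conditions (ii)--(iv) in the definition are linear), the bracket $[P,Q]^{MN}=P\circ Q-(-1)^{pq}Q\circ P$, being a difference of two maps of this bidegree, again has bidegree $k_P+k_Q\,|\,l_P+l_Q$, which is the assertion.

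I expect the only real obstacle to be the bookkeeping in the second step, specifically the check that the two possibilities for the intermediate output of $Q$ contribute the \emph{same} total weight $(a-k_Q,b-l_Q)$ to $P$, and hence the same output space, so that summation over unshuffles cannot mix $\g_1$- and $\g_2$-valued terms; once the $\mathbb{Z}^2$-grading separates the two colours, this collapses to the single identity $(a,b)-(\alpha,\beta)+c_Q=(a-k_Q,b-l_Q)$. A minor point to keep in mind is the boundary behaviour when $k$ or $l$ equals $-1$: the associated weight then has a negative entry and simply never occurs as the weight of an actual input, so those cases impose no constraint and the argument is unaffected.
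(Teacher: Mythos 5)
Your argument is correct. Note that the paper itself offers no proof of this lemma --- it is quoted from the reference [Liu] --- so there is nothing internal to compare against; your verification is the standard one, and recasting the bidegree as homogeneity with respect to the $\mathbb{Z}^2$-weight for which $\g_1$ has weight $(1,0)$ and $\g_2$ has weight $(0,1)$ (so that $\huaG^{k,l}$ is the weight-$(k,l)$ component and a bidegree-$k|l$ map is exactly a weight-$(-k,-l)$ homogeneous map) is a clean way to organize it. The one point that genuinely needs checking --- that the two possible intermediate outputs of $Q$ feed $P$ the same total weight $(a-k_Q,b-l_Q)$, so no summand of the unshuffle sum can land in the wrong component --- is exactly the point you isolate and verify.
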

\begin{lem}{\rm(\cite{Liu})}\label{bidegree-2}
If $||f||=-1|k$ (resp. $k|-1$) and $||g||=-1|l$ (resp. $l|-1$), then $[f,g]^{MN}=0$.
\end{lem}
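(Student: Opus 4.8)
The plan is to read the vanishing off the bidegree arithmetic already recorded in Lemma~\ref{bidegree-1}, and then to confirm it by directly inspecting the two unshuffle sums defining $P\circ Q$. For the case $||f||=-1|k$ and $||g||=-1|l$, Lemma~\ref{bidegree-1} tells us that $[f,g]^{MN}$ has bidegree $(-1)+(-1)\,|\,k+l$, i.e. its first component is $-2$. The crucial observation is that the only map of such a bidegree is the zero map: by the definition of bidegree, a homogeneous map $h$ of bidegree $-2|m$ can be nonzero only on $\huaG^{-1,m}$ and on $\huaG^{-2,m+1}$, and both of these spaces are trivial, since every summand of $\huaG^{k,l}$ with $k\leq -1$ carries a factor $\wedge^{k-1}\g_1$ or $\wedge^{k}\g_1$ with negative exponent. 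Thus condition~(iv) of the bidegree definition applies to every argument and forces $h=0$; hence $[f,g]^{MN}=0$. The case $||f||=k|-1$, $||g||=l|-1$ is completely symmetric under exchanging the roles of $\g_1$ and $\g_2$: here $[f,g]^{MN}$ has bidegree $k+l\,|\,-2$, and the vanishing of $\huaG^{\bullet,-1}$ and $\huaG^{\bullet,-2}$ again forces the bracket to be zero.

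To be safe I would also give the direct argument, which does not invoke Lemma~\ref{bidegree-1}. In the case $||f||=k|-1$, $||g||=l|-1$, both $f$ and $g$ are lifts of maps into $\g_2$ that are supported only on all-$\g_1$ inputs, i.e. each is nonzero solely on $\huaG^{\bullet,0}=\wedge^{\bullet}\g_1\otimes\g_1$. In every summand of $f\circ g$ and of $g\circ f$, as displayed in the formula for $P\circ Q$, the inner map contributes its output --- an element of $\g_2$ --- into one argument slot of the outer map; this places the argument tuple outside the all-$\g_1$ support of the outer map, so by multilinearity (equivalently, by condition~(iv)) every such summand is zero. Therefore $f\circ g=g\circ f=0$ and $[f,g]^{MN}=0$. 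The case $||f||=-1|k$, $||g||=-1|l$ is identical with $\g_1$ and $\g_2$ interchanged: both maps output in $\g_1$ and are supported on all-$\g_2$ inputs, so feeding a $\g_1$-valued output into either returns zero.

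I expect the only delicate point --- and the step I would slow down on --- to be matching the combinatorics of the two unshuffle sums in $P\circ Q$ against the support conditions of a homogeneous map, so as to be certain that in \emph{both} sums it is genuinely an output of the inner map (and not an untouched argument) that lands in the slot incompatible with the outer map's bidegree. Once this is verified, the bidegree count and the direct inspection agree, and either one completes the proof.
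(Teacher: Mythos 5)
Your argument is correct. The paper itself gives no proof of this lemma (it is quoted from \cite{Liu}), so there is nothing to diverge from; both of your routes are sound. The direct support argument is the standard one: a map of bidegree $k|-1$ (resp.\ $-1|k$) is supported only on $\huaG^{k,0}=\wedge^{k-1}\g_1\otimes\g_1$ (resp.\ on $\wedge^{k-1}\g_2\otimes\g_2$) and takes values in the opposite summand, so in every term of either unshuffle sum the inner map's output lands in a slot that pushes the outer map's argument off its support, and you correctly checked that in both sums it is indeed the inner output (first slot in one sum, last slot in the other) that does this. Your first route, reading the vanishing off Lemma \ref{bidegree-1}, needs exactly the care you gave it: the formal bidegree $k+l|-2$ lies outside the allowed range $l\geq -1$, and the conclusion only follows because $\huaG^{\bullet,-1}$ and $\huaG^{\bullet,-2}$ are trivial, forcing any such homogeneous map to vanish. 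Either argument alone completes the proof.
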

By Lemma \ref{bidegree-1} and  Lemma \ref{bidegree-2}, we obtain that $(\oplus_{k=0}^{+\infty}C^{k|0}(\g_1\oplus \g_2,\g_1\oplus \g_2), [\cdot,\cdot]^{MN})$ is a graded Lie subalgebra of the graded Lie algebra $(C^*(\g_1\oplus \g_2,\g_1\oplus \g_2), [\cdot,\cdot]^{MN})$ and $(\oplus_{l=0}^{+\infty}C^{l|-1}(\g_1\oplus \g_2,\g_1\oplus \g_2), [\cdot,\cdot]^{MN})$ is an abelian graded Lie subalgebra of the graded Lie algebra $(C^*(\g_1\oplus \g_2,\g_1\oplus \g_2), [\cdot,\cdot]^{MN})$.
\begin{pro}{\rm(\cite{Liu1})}\label{MC-prelierep}
Let $\g$ and $V$ be two vector spaces. Then $\pi+\rho+\mu \in C^{1|0}(\g\oplus V,\g\oplus V)$ is a Maurer-Cartan element of the graded Lie algebra $(\oplus_{k=0}^{+\infty}C^{k|0}(\g\oplus V,\g\oplus V), [\cdot,\cdot]^{MN})$ if and only if $(V;\rho,\mu)$ is a representation of the pre-Lie algebra $(\g,\pi)$.
\end{pro}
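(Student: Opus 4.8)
The plan is to recognize that the Maurer--Cartan equation $[\pi+\rho+\mu,\pi+\rho+\mu]^{MN}=0$ is, after unwinding the lift, exactly the statement that $\pi+\rho+\mu$ endows $\g\oplus V$ with a pre-Lie algebra structure, and then to read off the three components of that identity. First I would use the isomorphism $C^{1|0}(\g\oplus V,\g\oplus V)\cong\Hom(\wedge^1\g\otimes\g,\g)\oplus\Hom(\wedge^1\g\otimes V,V)\oplus\Hom(V\otimes\g,V)$ together with the two horizontal-lift formulas to compute the bilinear product $\bullet$ on $\g\oplus V$ determined by $\pi+\rho+\mu$. When $k=l=1$ each relevant unshuffle set is a singleton, so the hat-constructions evaluate without signs and give
\begin{equation*}
(x+u)\bullet(y+v)=x\cdot y+\rho(x)v+\mu(y)u,
\end{equation*}
the semidirect-type product. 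Since $\pi+\rho+\mu$ sits in odd degree, the characterization recalled after the definition of the Matsushima--Nijenhuis bracket shows that $[\pi+\rho+\mu,\pi+\rho+\mu]^{MN}=0$ holds precisely when $\bullet$ satisfies the pre-Lie identity.

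Next I would use the bidegree bookkeeping to decouple this identity. By Lemma \ref{bidegree-1} the bracket $[\pi+\rho+\mu,\pi+\rho+\mu]^{MN}$ has bidegree $2|0$, hence lies in $\Hom(\wedge^2\g\otimes\g,\g)\oplus\Hom(\wedge^2\g\otimes V,V)\oplus\Hom(\wedge^1\g\otimes V\otimes\g,V)$, and the Maurer--Cartan equation is equivalent to the separate vanishing of all three summands. Concretely I would expand the associator $(a\bullet b)\bullet c-a\bullet(b\bullet c)$ for $a=x+u$, $b=y+v$, $c=z+w$, impose its symmetry in the first two arguments, and collect terms according to whether they are $\g$-valued or $V$-valued and according to which of $u,v,w$ they carry.

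The three resulting identities are exactly the defining axioms. The $\g$-valued part involves only $\pi$ and reproduces $[\pi,\pi]^{MN}=0$, i.e. $(\g,\pi)$ is a pre-Lie algebra. Collecting the $V$-valued terms carrying the last-slot input $w$ yields $\rho(x\cdot y)-\rho(y\cdot x)=\rho(x)\rho(y)-\rho(y)\rho(x)$, i.e. $\rho([x,y]_C)=[\rho(x),\rho(y)]$, so $\rho$ represents the subadjacent Lie algebra $\g^C$. Collecting the $V$-valued terms carrying a middle-slot input $u$ (equivalently $v$) yields $\mu(y)\mu(x)-\mu(x\cdot y)=\mu(y)\rho(x)-\rho(x)\mu(y)$, which is precisely the second representation axiom. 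Since each step is reversible, running the computation backwards establishes the converse and hence the claimed equivalence.

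The main obstacle is the sign-and-slot bookkeeping in the middle step: one must verify that the cross terms mixing $\rho$ and $\mu$ fall into the correct bidegree summand and combine with the correct $\sgn(\sigma)$ coming from the unshuffle definitions of $\circ$ and of the lift, so that exactly the two representation axioms emerge with no spurious constraint. Once the product $\bullet$ has been identified and the splitting of Lemma \ref{bidegree-1} is invoked to guarantee that the three components vanish independently, the remainder is a careful but routine expansion.
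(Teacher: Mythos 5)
Your proposal is correct. The paper itself gives no proof of this proposition — it is quoted from the reference [Liu1] — so there is nothing internal to compare against, but your argument is the standard one: identifying the lift of $\pi+\rho+\mu$ with the semidirect-type product $(x+u)\bullet(y+v)=x\cdot y+\rho(x)v+\mu(y)u$, noting that the Maurer--Cartan equation $[\pi+\rho+\mu,\pi+\rho+\mu]^{MN}=0$ says precisely that $\bullet$ is pre-Lie, and then splitting the pre-Lie identity by bidegree (the bracket lands in bidegree $2|0$, so inputs with two or more $V$-slots contribute nothing) into the pre-Lie axiom for $\pi$, the Lie-algebra representation axiom for $\rho$ on $\g^C$, and the $\mu$-compatibility, exactly recovering the definition of a representation.
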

Let $(V;\rho,\mu)$ be a representation of a pre-Lie algebra $(\g,\pi)$. By Proposition \ref{MC-prelierep}, $\pi+\rho+\mu$ is a Maurer-Cartan element of the the graded Lie algebra $(\oplus_{k=0}^{+\infty}C^{k|0}(\g\oplus V,\g\oplus V), [\cdot,\cdot]^{MN})$. It follows from the graded Jacobi identity that $\dM_{\pi+\rho+\mu}:=[\pi+\rho+\mu,\cdot]^{MN}$ is a graded derivation of the graded Lie algebra $(\oplus_{k=0}^{+\infty}C^{k|0}(\g\oplus V,\g\oplus V), [\cdot,\cdot]^{MN})$ satisfying $\dM_{\pi+\rho+\mu}^2=0$. Thus, we have
\begin{thm}
Let $(V;\rho,\mu)$ be a representation of a pre-Lie algebra $(\g,\pi)$. Then $(\oplus_{k=0}^{+\infty}C^{k|0}(\g\oplus V,\g\oplus V), [\cdot,\cdot]^{MN},\dM_{\pi+\rho+\mu})$ is a differential graded Lie algebra.

 Furthermore, $(V;\rho+\rho',\mu+\mu')$ is representation of a pre-Lie algebra $(g,\pi+\pi')$ for $\pi'\in \Hom(\g\otimes\g,\g)$, $\rho'\in \Hom(\g\otimes V, V)$ and $\mu'\in \Hom(V\otimes \g,\g)$ if and only if $\pi'+\rho'+\mu'$ is a Maurer-Cartan element of the differential graded Lie algebra $(\oplus_{k=0}^{+\infty}C^{k|0}(\g\oplus V,\g\oplus V), [\cdot,\cdot]^{MN},\dM_{\pi+\rho+\mu})$.
\end{thm}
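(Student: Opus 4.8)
The plan is to treat both assertions as essentially formal consequences of the graded Lie algebra structure on $\oplus_{k=0}^{+\infty}C^{k|0}(\g\oplus V,\g\oplus V)$ (established just above via Lemma \ref{bidegree-1} and Lemma \ref{bidegree-2}), combined with Proposition \ref{MC-prelierep}. The single crucial input is that $m:=\pi+\rho+\mu$ is a Maurer-Cartan element, i.e. $[\pi+\rho+\mu,\pi+\rho+\mu]^{MN}=0$, which Proposition \ref{MC-prelierep} supplies from the hypothesis that $(V;\rho,\mu)$ is a representation of $(\g,\pi)$.

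For the first assertion I would argue as follows. Since $m\in C^{1|0}$ lies in the subalgebra, Lemma \ref{bidegree-1} shows that $\dM_{\pi+\rho+\mu}=[m,\cdot]^{MN}$ raises bidegree by $1|0$, hence preserves $\oplus_{k=0}^{+\infty}C^{k|0}$ and is a well-defined operator of degree $1$ on it. That $\dM_{\pi+\rho+\mu}$ is a graded derivation is immediate from the graded Jacobi identity: for homogeneous $f,g$, using that $m$ has odd degree $1$,
\[ [m,[f,g]^{MN}]^{MN}=[[m,f]^{MN},g]^{MN}+(-1)^{|f|}[f,[m,g]^{MN}]^{MN}. \]
For $\dM_{\pi+\rho+\mu}^2=0$ I would apply the graded Jacobi identity to $[m,[m,f]^{MN}]^{MN}$; because $m$ is odd the resulting two copies of $[m,[m,f]^{MN}]^{MN}$ combine to give $[m,[m,f]^{MN}]^{MN}=\half[[m,m]^{MN},f]^{MN}$, which vanishes since $[m,m]^{MN}=0$. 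This establishes that $(\oplus_{k=0}^{+\infty}C^{k|0},[\cdot,\cdot]^{MN},\dM_{\pi+\rho+\mu})$ is a differential graded Lie algebra.

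For the second assertion I would set $\alpha:=\pi'+\rho'+\mu'\in C^{1|0}$ and note that $(\pi+\pi')+(\rho+\rho')+(\mu+\mu')=m+\alpha$. Applying Proposition \ref{MC-prelierep} to the pre-Lie algebra $(\g,\pi+\pi')$ and the pair $(\rho+\rho',\mu+\mu')$, the pair $(V;\rho+\rho',\mu+\mu')$ is a representation of $(\g,\pi+\pi')$ if and only if $m+\alpha$ is a Maurer-Cartan element of the plain graded Lie algebra, that is $[m+\alpha,m+\alpha]^{MN}=0$. Expanding bilinearly, using $[m,m]^{MN}=0$ and the graded antisymmetry of $[\cdot,\cdot]^{MN}$ on the two odd elements $m,\alpha$ (which yields $[\alpha,m]^{MN}=[m,\alpha]^{MN}$), this collapses to $2[m,\alpha]^{MN}+[\alpha,\alpha]^{MN}=0$, i.e. $\dM_{\pi+\rho+\mu}\alpha+\half[\alpha,\alpha]^{MN}=0$. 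This is exactly the Maurer-Cartan equation for $\alpha$ in the twisted differential graded Lie algebra of the first part, giving the desired equivalence.

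The whole argument is formal, so I do not expect a substantive obstacle; the only points demanding care are the sign bookkeeping—keeping track that both $m$ and $\alpha$ carry odd degree $1$, which is precisely what forces the graded Jacobi identity to collapse to $\dM_{\pi+\rho+\mu}^2=0$ and makes $[m,\alpha]^{MN}$ symmetric—and the verification, via Lemma \ref{bidegree-1}, that every bracket remains inside $\oplus_{k=0}^{+\infty}C^{k|0}$ so that the differential graded Lie algebra structure is genuinely internal. The conceptual heavy lifting, namely the identification of Maurer-Cartan elements with representations, is already carried out in Proposition \ref{MC-prelierep}.
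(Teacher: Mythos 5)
Your proposal is correct and follows essentially the same route as the paper: the paper's (unlabelled) proof is exactly the preceding paragraph, which invokes Proposition \ref{MC-prelierep} to get $[\pi+\rho+\mu,\pi+\rho+\mu]^{MN}=0$ and then the graded Jacobi identity to conclude that $\dM_{\pi+\rho+\mu}$ is a square-zero derivation, with the second assertion being the standard expansion of $[m+\alpha,m+\alpha]^{MN}=0$ into the Maurer--Cartan equation for $\alpha$. Your write-up simply makes the sign bookkeeping and the bidegree-preservation check (via Lemma \ref{bidegree-1}) explicit, which the paper leaves implicit.
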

Let $(V;\rho,\mu)$ be a representation of a pre-Lie algebra $(\g,\pi)$. Define the set of $0$-cochains $C^0(\g,\pi,\rho,\mu)$ to be $0$. For $n\geq 1$, we define the set of $n$-cochains $C^n(\g,\pi,\rho,\mu)$ by
\begin{equation*}
C^n(\g,\pi,\rho,\mu):=\Hom(\wedge^{n-1}\g\otimes\g,\g)\oplus\Hom(\wedge^{n-1}\g\otimes V,V)\oplus\Hom(\wedge^{n-2}\g\otimes V\otimes\g,V).
\end{equation*}
Define the coboundary operator $\partial:C^n(\g,\pi,\rho,\mu)\longrightarrow C^{n+1}(\g,\pi,\rho,\mu)$ by
\begin{equation*}
\partial f:=(-1)^{n-1}\dM_{\pi+\rho+\mu}f=(-1)^{n-1}[\pi+\rho+\mu,f]^{MN}.
\end{equation*}
Since $\dM_{\pi+\rho+\mu}^2=0$, we obtain that $\partial^2=0$. Thus, $(\oplus_{n=0}^{+\infty}C^n(\g,\pi,\rho,\mu),\partial)$ is a cochain complex.
\begin{defi}
The cohomology of the cochain complex $(\oplus_{n=1}^{+\infty}C^n(\g,\pi,\rho,\mu),\partial)$ is called the cohomology of the pre-Lie algebra $(\g,\pi)$ with the representation $(V;\rho,\mu)$. The corresponding cohomology group is denoted by $H^n(\g,\pi,\rho,\mu)$.
\end{defi}
For all $f=(f_{\g},f_{\rho},f_{\mu})\in  C^n(\g,\pi,\rho,\mu)$, where $f_{\g}\in \Hom(\wedge^{n-1}\g\otimes\g,\g)$, $f_{\rho}\in \Hom(\wedge^{n-1}\g\otimes V,V)$ and $f_{\mu}\in \Hom(\wedge^{n-2}\g\otimes V\otimes\g,V)$. Then have
\begin{equation*}
\partial f=((\partial f)_{\g},(\partial f)_{\rho},(\partial f)_{\mu}).
\end{equation*}
By straightforward computation, we obtain that
\begin{eqnarray}
\label{cohomology-1}(\partial f)_{\g}&=&(-1)^{n-1}[\pi,f_{\g}]^{MN}=\dM_{\reg} f_{\g},\\
(\partial f)_{\rho}&=&(-1)^{n-1}([\rho,f_{\g}]^{MN}+[\pi+\rho,f_{\rho}]^{MN})\\
(\partial f)_{\mu}&=&(-1)^{n-1}([\mu,f_{\g}]^{MN}+[\mu,f_{\rho}]^{MN}+[\pi+\rho+\mu,f_{\mu}]^{MN}).
\end{eqnarray}
More precisely, for all $x_1,\dots,x_n \in \g, u\in V$, we have
\begin{eqnarray*}
(\partial f)_{\rho}(x_1,\dots,x_n,u)&=&\sum_{i=1}^n(-1)^{i+1}\rho(f_{\g}(x_1,\dots,\hat{x_i},\dots,x_n,x_i))u\\
&&+\sum_{i=1}^n(-1)^{i+1}\rho(x_i)f_{\rho}(x_1,\dots,\hat{x_i},\dots,x_n,u)\\
&&-\sum_{i=1}^n(-1)^{i+1}f_{\rho}(x_1,\dots,\hat{x_i}\dots,x_n,\rho(x_i)u)\\
&&+\sum_{1\leq i<j\leq n}(-1)^{i+j} f_{\rho}([x_i,x_j]_C,x_1,\dots,\hat{x_i},\dots,\hat{x_j},\dots,x_n,u).
\end{eqnarray*}
and
\begin{eqnarray*}
&&(\partial f)_{\mu}(x_1,\dots,x_{n-1},u,x_n)\\
&=&(-1)^{n-1}(\mu(f_{\g}(x_1,\dots,x_n))u+\mu(x_n)f_{\rho}(x_1,\dots,x_{n-1},u)-f_{\rho}(x_1,\dots,x_{n-1},\mu(x_n)u))\\
&&-\sum_{i=1}^{n-1}(-1)^{i+1}f_{\mu}(x_1,\dots,\hat{x_i},\dots,x_{n-1},u,x_i\cdot x_n)\\
&&+\sum_{i=1}^{n-1}(-1)^{i+1}\rho(x_i)f_{\mu}(x_1,\dots,\hat{x_i},\dots,x_{n-1},u,x_n)\\
&&-\sum_{i=1}^{n-1}(-1)^{i+1}f_{\mu}(x_1,\dots,\hat{x_i}\dots,x_{n-1},\rho(x_i)u,x_n)\\
&&+\sum_{i=1}^{n-1}(-1)^{i+1}\mu(x_n)f_{\mu}(x_1,\dots,\hat{x_i},\dots,x_{n-1},u,x_i)\\
&&+\sum_{i=1}^{n-1}(-1)^{i+1}f_{\mu}(x_1,\dots,\hat{x_i},\dots,x_{n-1},\mu(x_i)u,x_n)\\
&&+\sum_{1\leq i<j\leq n-1}(-1)^{i+j} f_{\mu}([x_i,x_j]_C,x_1,\dots,\hat{x_i},\dots,\hat{x_j},\dots,x_{n-1},u,x_n).
\end{eqnarray*}

\section{Maurer-Cartan characterizations and deformations of pre-LieDer pairs}\label{sec:deformation}
In this section, first, we recall $L_{\infty}$-algebras and higher derived brackets. Then, we use higher derived brackets to construct an $L_{\infty}$-algebra, whose  Maurer-Cartan elements are pre-LieDer pairs. Finally, we construct a twist $L_\infty$-algebra by this Maurer-Cartan element, which controls deformations of  pre-LieDer pairs.
\subsection{$L_{\infty}$-algebras and higher derived brackets}
\
\newline
\indent
Let $\g=\oplus_{k\in\mathbb Z}\g^k$ be a $\mathbb Z$-graded vector space. The desuspension operator $s^{-1}$ changes the grading of $\g$ according to the rule $(s^{-1}\g)^i:=\g^{i+1}$. The degree $-1$ map $s^{-1}:\g\longrightarrow s^{-1}\g$ is defined by sending $x\in \g$ to its copy $s^{-1}x\in s^{-1}\g$.

The notion of an $L_{\infty}$-algebra was introduced by Stasheff in \cite{JS}. See \cite{LT,LT1} for more details.
\begin{defi}
An {\bf  $L_\infty$-algebra} is a $\mathbb Z$-graded vector space $\g=\oplus_{k\in\mathbb Z}\g^k$ equipped with a collection $(k\ge 1)$ of linear maps $l_k:\otimes^k\g\lon\g$ of degree $1$ with the property that, for any homogeneous elements $x_1,\cdots, x_n\in \g$, we have
\begin{itemize}\item[\rm(i)]
{\em (graded symmetry)} for every $\sigma\in S_{n}$,
\begin{eqnarray*}
l_n(x_{\sigma(1)},\cdots,x_{\sigma(n)})=\varepsilon(\sigma)l_n(x_1,\cdots,x_n),
\end{eqnarray*}
\item[\rm(ii)] {\em (generalized Jacobi identity)} for all $n\ge 1$,
\begin{eqnarray*}\label{sh-Lie}
\sum_{i=1}^{n}\sum_{\sigma\in  S(i,n-i) }\varepsilon(\sigma)l_{n-i+1}(l_i(x_{\sigma(1)},\cdots,x_{\sigma(i)}),x_{\sigma(i+1)},\cdots,x_{\sigma(n)})=0,
\end{eqnarray*}

\end{itemize}where $\varepsilon(\sigma)=\varepsilon(\sigma;x_1,\cdots, x_n)$ is the   Koszul sign for a permutation $\sigma\in S_n$ and $x_1,\cdots, x_n\in \g$.
\end{defi}
We denote an $L_\infty$-algebra by $(\g,\{l_k\}_{k=1}^{+\infty})$.

\begin{defi}
A {\bf Maurer-Cartan element} of an $L_\infty$-algebra $(\g, \{l_k\}_{k=1}^{+\infty})$ is an element $\alpha\in {\g}^0$ satisfying
\begin{equation}\label{L-MC}
 \sum_{k=1}^{+\infty}\frac{1}{k!}l_k(\alpha, \cdots, \alpha)=0.
\end{equation}
\end{defi}
\begin{rmk}
In general, the Maurer-Cartan equation \eqref{L-MC} makes sense when the $L_\infty$-algebra is a filtered  $L_\infty$-algebra \cite{VAD}. In the following, the $L_\infty$-algebra under consideration satisfies $l_k=0$ for $k$ sufficiently big, so the Maurer-Cartan equation make sense.
\end{rmk}
Let $\alpha$ be a Maurer-Cartan element of a $L_\infty$-algebra $(\g, \{l_k\}_{k=1}^{+\infty})$. Define $l_{k}^{\alpha}:\otimes^{k} \g\lon \g ~~(k\geq1)$ by
\begin{equation*}
l_{k}^{\alpha}(x_1,\cdots,x_k)=\sum_{n=0}^{+\infty}\frac{1}{n!}l_{k+n}(\underbrace{\alpha,\cdots,\alpha}_{n},x_1,\cdots, x_k).
\end{equation*}

\begin{thm}\label{twistLin}{\rm(\cite{DSV,Get})}
With the above notation,  $(\g,\{l_k^{\alpha}\}_{k=1}^{+\infty})$ is an $L_\infty$-algebra which is called the twisted $L_\infty$-algebra by $\alpha$.
\end{thm}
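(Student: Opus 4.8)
The statement to prove is that if $\alpha$ is a Maurer-Cartan element of an $L_\infty$-algebra $(\g,\{l_k\}_{k=1}^{+\infty})$, then the twisted maps $\{l_k^\alpha\}_{k=1}^{+\infty}$ defined by $l_k^\alpha(x_1,\cdots,x_k)=\sum_{n=0}^{+\infty}\frac{1}{n!}l_{k+n}(\alpha,\cdots,\alpha,x_1,\cdots,x_k)$ again satisfy the axioms of an $L_\infty$-algebra. The plan is to verify the two defining conditions of Definition~2.5 for the collection $\{l_k^\alpha\}$: graded symmetry and the generalized Jacobi identity. Throughout I would work inside the filtered setting implicit in the remark after the Maurer-Cartan definition, so that all the infinite sums over $n$ converge (equivalently, $l_k=0$ for $k$ large, as the paper assumes), and I would freely reorder and regroup these sums.

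\emph{Step 1 (graded symmetry).} This is the easy half. Each summand $l_{k+n}(\alpha,\cdots,\alpha,x_1,\cdots,x_k)$ inherits graded symmetry in the arguments $x_1,\cdots,x_k$ directly from the graded symmetry of $l_{k+n}$, since permuting the $x_i$ among themselves fixes the $\alpha$-block and the Koszul sign computed for the enlarged argument list agrees with the Koszul sign for $x_1,\cdots,x_k$ (the $\alpha$'s contribute trivially because $\alpha$ has even total degree in the relevant convention). Summing over $n$ preserves this, so each $l_k^\alpha$ is graded symmetric.

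\emph{Step 2 (generalized Jacobi identity).} This is the heart of the proof and the main obstacle. I would substitute the definition of $l_k^\alpha$ into the generalized Jacobi expression $\sum_{i=1}^{n}\sum_{\sigma\in S(i,n-i)}\varepsilon(\sigma)\,l_{n-i+1}^\alpha\!\big(l_i^\alpha(x_{\sigma(1)},\cdots,x_{\sigma(i)}),x_{\sigma(i+1)},\cdots,x_{\sigma(n)}\big)$ and expand everything as a double sum over the numbers of $\alpha$-insertions in the inner and outer brackets. After relabeling, each term is of the form $l_{*}(\alpha,\cdots,\alpha,l_{*}(\alpha,\cdots,\alpha,x_{\tau(1)},\cdots),\cdots)$ with a fixed total number $m$ of $\alpha$'s distributed between the two brackets. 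The key combinatorial point is to fix this total $m$ and collect all contributions with exactly $m$ copies of $\alpha$: the claim is that for each fixed $m$ the collected sum is precisely $\frac{1}{m!}$ times an instance of the original generalized Jacobi identity for $\{l_k\}$ applied to the $m+n$ arguments $(\underbrace{\alpha,\cdots,\alpha}_{m},x_1,\cdots,x_n)$, which vanishes. The nontrivial bookkeeping is matching the multinomial coefficients: the $\frac{1}{n!}$ factors in the definition of the twist, together with the count of unshuffles that place a given number of $\alpha$'s into the inner versus outer slot, must reassemble into exactly the unshuffle sum $\sum_{\sigma\in S(i,m+n-i)}$ appearing in the untwisted Jacobi identity with the $\alpha$'s now treated as ordinary (indistinguishable, whence the $\frac{1}{m!}$) arguments. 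I expect this reindexing — showing the Koszul signs $\varepsilon(\sigma)$ are compatible and that the unshuffles over $\{x_i\}$ refine correctly into unshuffles over $\{\alpha,\cdots,\alpha,x_i\}$ — to be the genuinely delicate step.

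Since the Maurer-Cartan equation \eqref{L-MC} guarantees that the $\alpha$-only terms (the case where no $x_i$ appears) already vanish, and the generalized Jacobi identity for $\{l_k\}$ kills each fixed-$m$ packet, summing over all $m\geq 0$ yields that $\{l_k^\alpha\}$ satisfies the generalized Jacobi identity. Together with Step~1 this establishes that $(\g,\{l_k^\alpha\}_{k=1}^{+\infty})$ is an $L_\infty$-algebra. I would remark that this is a standard twisting construction and cite \cite{DSV,Get} for the combinatorial identity rather than reproduce every sign computation, since the referee can check the $m$-by-$m$ matching is a formal consequence of the coalgebra description of $L_\infty$-structures.
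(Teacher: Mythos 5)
The paper offers no proof of this theorem: it is imported verbatim from the literature with the citations \cite{DSV,Get}, so there is nothing internal to compare your argument against. Judged on its own, your outline is the standard direct verification of the twisting procedure and the overall strategy (graded symmetry is inherited termwise; the generalized Jacobi identity is checked by expanding in the number of $\alpha$-insertions and reassembling the untwisted identity) is the right one.

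One point in Step 2 is stated too strongly and would not survive a careful write-up as phrased. For a fixed total number $m$ of $\alpha$'s, the collection of terms coming from the twisted Jacobi expression is \emph{not} equal to $\frac{1}{m!}$ times the untwisted generalized Jacobi identity evaluated on $(\alpha,\cdots,\alpha,x_1,\cdots,x_n)$: the untwisted identity at arity $m+n$ also contains the unshuffles in which the inner operation receives \emph{only} $\alpha$'s, i.e.\ terms of the form $l_{m+n-j+1}(l_j(\alpha,\cdots,\alpha),\alpha,\cdots,\alpha,x_1,\cdots,x_n)$, and these have no counterpart in the expansion of $\sum_i l^{\alpha}_{n-i+1}(l^{\alpha}_i(\cdots),\cdots)$ because there the inner slot always contains at least one $x_i$. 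Moreover these leftover terms do not vanish packet-by-packet in $m$, since an individual $l_j(\alpha,\cdots,\alpha)$ need not be zero; only after summing over the inner arity $j$ do they assemble into $\sum_q\frac{1}{q!}\,l_{n+1+q}\bigl(\sum_j\frac{1}{j!}l_j(\alpha,\cdots,\alpha),\alpha,\cdots,\alpha,x_1,\cdots,x_n\bigr)$, which is killed by the Maurer--Cartan equation \eqref{L-MC}. Your final sentence invoking the Maurer--Cartan equation for ``the $\alpha$-only terms'' shows you have the right ingredient in hand, but the bookkeeping should be corrected to: (sum over $m$ of $\frac{1}{m!}\times$ untwisted Jacobi) $=$ (twisted Jacobi) $+$ (inner-all-$\alpha$ terms), with the left side vanishing by the untwisted Jacobi identity and the second summand on the right vanishing by Maurer--Cartan, whence the twisted Jacobi identity holds. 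With that repair, and the convergence caveat you already note (here guaranteed since $l_k=0$ for $k$ large), the argument is complete.
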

\begin{thm}\label{twisting-deformation}{\rm(\cite{Get})}
Let $(\g,\{l_k^{\alpha}\}_{k=1}^{+\infty})$ be a twist $L_\infty$-algebra by $\alpha$. Then $\alpha+\alpha'$ is a Maurer-Cartan element of the $L_\infty$-algebra $(\g,\{l_k\}_{k=1}^{+\infty})$ if and only if $\alpha'$ is a Maurer-Cartan element of the twist $L_\infty$-algebra $(\g,\{l_k^{\alpha}\}_{k=1}^{+\infty})$.
\end{thm}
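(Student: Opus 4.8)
The plan is to prove both implications at once by deriving a single closed-form identity relating the two Maurer–Cartan expressions, so that one sum vanishes precisely when the other does. Concretely, I would compute the untwisted Maurer–Cartan sum $\sum_{m\geq 1}\frac{1}{m!}l_m(\alpha+\alpha',\dots,\alpha+\alpha')$ for $\alpha+\alpha'$ and show it splits as the Maurer–Cartan sum for $\alpha$ (which is zero by hypothesis) plus the twisted Maurer–Cartan sum $\sum_{k\geq 1}\frac{1}{k!}l_k^{\alpha}(\alpha',\dots,\alpha')$ for $\alpha'$. The equivalence in the statement is then immediate.

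First I would expand $l_m(\alpha+\alpha',\dots,\alpha+\alpha')$ using multilinearity of $l_m$. Because $l_m$ is graded symmetric and both $\alpha$ and $\alpha'$ lie in $\g^0$, every permutation of these repeated arguments carries Koszul sign $\varepsilon(\sigma)=+1$; hence the expansion has no obstructing signs and collects into $\sum_{n=0}^{m}\binom{m}{n}\,l_m(\underbrace{\alpha,\dots,\alpha}_{n},\underbrace{\alpha',\dots,\alpha'}_{m-n})$. Dividing by $m!$ turns the binomial coefficient into $\frac{1}{n!\,(m-n)!}$.

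Next I would reindex the resulting double sum by the number $k=m-n$ of $\alpha'$-arguments, obtaining $\sum_{n,k\geq 0,\ n+k\geq 1}\frac{1}{n!\,k!}\,l_{n+k}(\underbrace{\alpha,\dots,\alpha}_{n},\underbrace{\alpha',\dots,\alpha'}_{k})$. Separating the $k=0$ block recovers exactly $\sum_{n\geq 1}\frac{1}{n!}l_n(\alpha,\dots,\alpha)$, which is the Maurer–Cartan sum for $\alpha$ and therefore vanishes; the $k\geq 1$ block regroups, by the very definition of the twisted brackets $l_k^{\alpha}(x_1,\dots,x_k)=\sum_{n\geq 0}\frac{1}{n!}l_{k+n}(\alpha,\dots,\alpha,x_1,\dots,x_k)$, into $\sum_{k\geq 1}\frac{1}{k!}l_k^{\alpha}(\alpha',\dots,\alpha')$. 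This yields the identity $\sum_{m\geq 1}\frac{1}{m!}l_m(\alpha+\alpha',\dots,\alpha+\alpha')=\sum_{k\geq 1}\frac{1}{k!}l_k^{\alpha}(\alpha',\dots,\alpha')$, from which the claimed equivalence follows at once.

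The computation is essentially bookkeeping, so the only delicate points are sign tracking and convergence. The main subtlety is justifying the clean binomial expansion, which rests entirely on $\alpha,\alpha'$ being even (degree $0$), so that the Koszul signs are trivial; were the elements not of degree $0$, the cross terms would carry nontrivial signs and the regrouping would fail. Convergence poses no difficulty here, since by the preceding remark the $L_\infty$-algebra under consideration satisfies $l_k=0$ for $k$ large, so every sum involved is finite and the reindexing step is legitimate.
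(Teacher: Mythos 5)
Your proposal is correct, and the binomial expansion/reindexing identity
$\sum_{m\geq 1}\frac{1}{m!}l_m(\alpha+\alpha',\dots,\alpha+\alpha')=\sum_{n\geq 1}\frac{1}{n!}l_n(\alpha,\dots,\alpha)+\sum_{k\geq 1}\frac{1}{k!}l_k^{\alpha}(\alpha',\dots,\alpha')$
is exactly the standard argument; you also correctly identify the two points that need care (triviality of the Koszul signs because $\alpha,\alpha'\in\g^0$, and finiteness of the sums via $l_k=0$ for large $k$). The paper itself gives no proof of this statement --- it is quoted from \cite{Get} --- so there is nothing to compare against beyond noting that your computation is the one found in the cited literature.
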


\begin{defi}\rm(\cite{Vo})
A {\bf $V$-data} consists of a quadruple $(L,\h,P,\Delta)$, where
\begin{itemize}
\item[$\bullet$] $(L,[\cdot,\cdot])$ is a graded Lie algebra,
\item[$\bullet$] $\h$ is an abelian graded Lie subalgebra of $(L,[\cdot,\cdot])$,
\item[$\bullet$] $P:L\lon L$ is a projection, that is $P\circ P=P$, whose image is $\h$ and its kernel is a  graded Lie subalgebra of $(L,[\cdot,\cdot])$,
\item[$\bullet$] $\Delta$ is an element in $\ker(P)^{1}$ such that $[\Delta,\Delta]=0$.
\end{itemize}
\end{defi}

\begin{thm}\rm(\cite{Vo,YF})\label{direct-sum}
Let $(L,\h,P,\Delta)$ be a $V$-data. Then the graded vector space $s^{-1}L\oplus \h$ is an $L_\infty$-algebra, where
\begin{eqnarray*}
l_1(s^{-1}f,\theta)&=&(-s^{-1}[\Delta,f],P(f+[\Delta,\theta])),\\
l_2(s^{-1}f,s^{-1}g)&=&(-1)^{|f|}s^{-1}[f,g],\\
l_k(s^{-1}f,\theta_1,\dots,\theta_{k-1})&=&P([\cdots[[f, \theta_1], \theta_2], \cdots, \theta_{k-1}]),\quad k\geq 2,\\
l_k(\theta_1,\dots,\theta_{k-1},\theta_k)&=&P([\cdots[[\Delta, \theta_1], \theta_2], \cdots, \theta_k]),\quad k\geq 2.
\end{eqnarray*}
Here $\theta,\theta_1,\dots,\theta_k$ are homogeneous elements of $\h$ and $f,g$ are homogeneous elements of $L$. All the other $L_\infty$-algebra products that are not obtained from the ones written above by permutations of arguments, will vanish.

Moreover, if $L'$ is a graded Lie subalgebra of $L$ that satisfies $[\Delta,L']\subset L'$, then $s^{-1}L'\oplus \h$ is an $L_\infty$-subalgebra of the above $L_\infty$-algebra $(s^{-1}L\oplus \h,\{l_k\}_{k=1}^{+\infty})$.
\end{thm}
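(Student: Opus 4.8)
The plan is to treat this as two tasks: verifying that $(s^{-1}L\oplus\h,\{l_k\}_{k=1}^{+\infty})$ satisfies the $L_\infty$-axioms, which is the substantial part, and checking that $s^{-1}L'\oplus\h$ is closed under the brackets, which is routine. Graded symmetry of each $l_k$ is immediate from the symmetric shape of the defining formulas together with the Koszul signs, so the content lies entirely in the generalized Jacobi identities. The structural inputs are the four $V$-data hypotheses, which I would repackage as follows: the graded Jacobi identity in $(L,[\cdot,\cdot])$; the relation $[\Delta,\Delta]=0$, making $d:=[\Delta,\cdot]$ a degree-$1$ differential with $d^2=\tfrac12[[\Delta,\Delta],\cdot]=0$ and satisfying the graded Leibniz rule over $[\cdot,\cdot]$; that $\h=\Img(P)$ is abelian; and that $\ker(P)$ is a subalgebra. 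The last two combine into the decisive projection identity $P[x,y]=P[Px,y]+P[x,Py]$ for all $x,y\in L$, the term $P[Px,Py]$ dropping out by abelian-ness of $\h$. Together with $\Delta\in\ker(P)$, i.e. $P\Delta=0$, this gives the simplification $P[\Delta,Px]=P[\Delta,x]$, which I expect to recur throughout.

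To organize the verification I would split the generalized Jacobi identity for $n$ arguments according to which summands the inputs lie in. When all inputs lie in $\h$, the active brackets are $l_1(\theta)=P[\Delta,\theta]$ and $l_k(\theta_1,\dots,\theta_k)=P[\cdots[[\Delta,\theta_1],\dots],\theta_k]$, and the resulting identities are exactly the higher-derived-bracket theorem of Voronov for the $V$-data $(L,\h,P,\Delta)$; I would invoke that result (cited here as \cite{Vo}) rather than reprove it. When all inputs lie in $s^{-1}L$, only $l_1$ and $l_2$ are active and the identities reduce to the axioms of the shifted differential graded Lie algebra $(L,d,[\cdot,\cdot])$, which hold because $d^2=0$, $d$ is a graded derivation of $[\cdot,\cdot]$, and $[\cdot,\cdot]$ satisfies graded Jacobi; here one only needs to track the $s^{-1}$ signs.

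The genuine work, and the step I expect to be the main obstacle, is the mixed identities, where both an $s^{-1}L$-input $s^{-1}f$ and one or more $\h$-inputs $\theta_i$ are present. These couple the $L$-part to the derived brackets through $l_k(s^{-1}f,\theta_1,\dots,\theta_{k-1})=P[\cdots[[f,\theta_1],\dots],\theta_{k-1}]$ and through the two-component shape of $l_1$, which sends $s^{-1}f\mapsto(-s^{-1}[\Delta,f],Pf)$. The subtle feature is this $\h$-valued ``leak'' $Pf$: it re-enters subsequent brackets and must cancel against other terms. The mechanism is visible already at $n=2$ on $(s^{-1}f,\theta)$, where the expansion throws up stray terms $P[[\Delta,Pf],\theta]$ and $P[Pf,[\Delta,\theta]]$ that cancel precisely because $[Pf,\theta]\in[\h,\h]=0$, so that applying graded Leibniz to $P[\Delta,[Pf,\theta]]=0$ expands into their sum. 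I would handle the general mixed identity the same way: expand the Jacobi sum, push $P$ inward with the projection identity, rewrite $d$-nested terms by graded Leibniz, and use abelian-ness of $\h$ to kill every leak term, with $d^2=0$ and $[\Delta,\Delta]=0$ closing the residual cancellations. Keeping the Koszul signs coherent through this telescoping is the delicate point; to keep them honest I would encode $\{l_k\}$ as a single degree-$1$ coderivation $D$ on the cofree cocommutative coalgebra on $s^{-1}L\oplus\h$ and verify $D^2=0$ by checking only its corestriction to $s^{-1}L\oplus\h$, which is equivalent to the family of generalized Jacobi identities.

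Finally, the subalgebra statement is a direct closure check requiring no new ideas. The $\h$-component of every output already lies in $\h$, so only the $s^{-1}L$-components matter: $l_1(s^{-1}f,\theta)$ has first component $-s^{-1}[\Delta,f]\in s^{-1}L'$ exactly because $[\Delta,L']\subset L'$, while $l_2(s^{-1}f,s^{-1}g)=(-1)^{|f|}s^{-1}[f,g]\in s^{-1}L'$ because $L'$ is a graded Lie subalgebra, and all higher brackets are $\h$-valued. Hence $s^{-1}L'\oplus\h$ is closed under every $l_k$, the two hypotheses on $L'$ being precisely what closure demands, and it is therefore an $L_\infty$-subalgebra.
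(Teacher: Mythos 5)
The paper offers no proof of this theorem: it is quoted verbatim from Voronov \cite{Vo} and Fr\'egier--Zambon \cite{YF}, so there is no in-paper argument to compare yours against. Judged against those sources, your outline is essentially the standard proof: isolating the projection identity $P[x,y]=P[Px,y]+P[x,Py]$ (which you derive correctly from abelian-ness of $\h$ and the subalgebra property of $\Ker(P)$), invoking Voronov's higher-derived-bracket theorem for the all-$\h$ inputs, and packaging the Jacobi identities as $D^2=0$ for a coderivation on the cofree cocommutative coalgebra is exactly how \cite{Vo,YF} organize the verification. Your closure check for $s^{-1}L'\oplus\h$ is also correct and complete: only the $s^{-1}L$-components of $l_1$ and $l_2$ can leave $s^{-1}L'$, and the two hypotheses on $L'$ are precisely what rules that out.

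One imprecision worth flagging: you assert that when all inputs lie in $s^{-1}L$ the identities ``reduce to the axioms of the shifted differential graded Lie algebra'' and only require tracking $s^{-1}$-signs. That is true only for the $s^{-1}L$-component of the Jacobiator. Because $l_1(s^{-1}f)$ already has the $\h$-valued leak $P(f)$, the $n=2$ identity on $(s^{-1}f,s^{-1}g)$ has a nontrivial $\h$-component, namely $(-1)^{|f|}P[f,g]$ from $l_1\circ l_2$ against $P[g,Pf]$ and $P[f,Pg]$ from $l_2\circ l_1$, and its vanishing is exactly your projection identity. So the identity you call ``decisive'' for the mixed case is already needed in the all-$s^{-1}L$ case; since you have it in hand, this is a misallocation of where the work sits rather than a missing idea, but a full write-up would have to treat that case alongside the mixed one rather than dismissing it as a pure DGLA check.
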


\subsection{The $L_{\infty}$-algebra that controls deformations of pre-LieDer pairs}
\begin{pro}
We have a $V$-data $(L,\h,P,\Delta)$ as follows:
\begin{itemize}
\item[$\bullet$] the graded Lie algebra $(L,[\cdot,\cdot])$ is given by $(\oplus_{n=1}^{+\infty}\Hom(\wedge^{n-1}(\g\oplus V)\otimes(\g\oplus V),\g\oplus V),[\cdot,\cdot]^{MN})$;
\item[$\bullet$] The abelian graded Lie subalgebra $\h$ is given by $$\h=\oplus_{n=0}^{+\infty}C^{n|-1}(\g\oplus V,\g\oplus V)=\oplus_{n=0}^{+\infty}\Hom(\wedge^{n-1}\g\otimes \g, V);$$
\item[$\bullet$] $P:L\longrightarrow L$ is a projection onto the subspace $\h$;
\item[$\bullet$] $\Delta=0$.
\end{itemize}
Consequently, we obtain an $L_{\infty}$-algebra $(s^{-1}L\oplus \h,\{l_k\}_{k=1}^{+\infty})$, where $l_i$ are given by
\begin{eqnarray*}
l_1(s^{-1}f,\theta)&=&P(f),\\
l_2(s^{-1}f,s^{-1}g)&=&(-1)^{|f|}s^{-1}[f,g]^{MN},\\
l_k(s^{-1}f,\theta_1,\dots,\theta_{k-1})&=&P([\cdots[[f, \theta_1]^{MN}, \theta_2]^{MN}, \cdots, \theta_{k-1}]^{MN}),\quad k\geq 2,
\end{eqnarray*}
for homogeneous elements $\theta,\theta_1,\dots,\theta_{k-1}\in\h$, $f,g\in L$ and all the other possible combinations vanish.
\end{pro}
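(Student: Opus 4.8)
The plan is to verify the four defining conditions of a $V$-data for the quadruple $(L,\h,P,\Delta)$ exhibited in the statement, and then to obtain the displayed $L_\infty$-brackets by specializing Theorem \ref{direct-sum} to the case $\Delta=0$. Three of the conditions are essentially bookkeeping with the bidegree decomposition already set up above; the substantive input is Lemmas \ref{bidegree-1} and \ref{bidegree-2}.

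First I would recall that $(L,[\cdot,\cdot]^{MN})$ is a graded Lie algebra, which is exactly the structure established earlier on $\oplus_{n}\Hom(\wedge^{n-1}(\g\oplus V)\otimes(\g\oplus V),\g\oplus V)$. For the second condition I identify $\h=\oplus_{n=0}^{+\infty}C^{n|-1}(\g\oplus V,\g\oplus V)$ under the isomorphism $C^{n|-1}\cong\Hom(\wedge^{n-1}\g\otimes\g,V)$, and I invoke Lemma \ref{bidegree-2}: any two elements of $\h$ have bidegree $\ast|-1$, so their Matsushima--Nijenhuis bracket vanishes. Hence $\h$ is closed under the bracket and the induced bracket is zero, i.e. $\h$ is an abelian graded Lie subalgebra. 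This is precisely the abelian subalgebra noted right before Proposition \ref{MC-prelierep}.

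Next I would treat the projection $P$. Using the direct-sum decomposition $L=\oplus_{k,l}C^{k|l}(\g\oplus V,\g\oplus V)$ coming from $\wedge^{n-1}(\g\oplus V)\otimes(\g\oplus V)\cong\oplus_{k+l=n}\huaG^{k,l}$, I define $P$ to be the projection onto the summands with second bidegree $-1$, i.e. onto $\h$. Then $P\circ P=P$ and $\Img P=\h$ are immediate, and $\ker P=\oplus_{k,\,l\geq 0}C^{k|l}$. To see that $\ker P$ is a graded Lie subalgebra, I apply Lemma \ref{bidegree-1}: if $\|f\|=k_f|l_f$ and $\|g\|=k_g|l_g$ with $l_f,l_g\geq 0$, then $[f,g]^{MN}$ has bidegree $k_f+k_g|l_f+l_g$ with $l_f+l_g\geq 0$, so it again lies in $\ker P$. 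Finally, the condition on $\Delta$ holds trivially with $\Delta=0$, since $[\Delta,\Delta]=0$ and $0\in\ker(P)^1$. This establishes that $(L,\h,P,\Delta)$ is a $V$-data.

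With the $V$-data in hand, Theorem \ref{direct-sum} immediately produces the $L_\infty$-algebra on $s^{-1}L\oplus\h$, and setting $\Delta=0$ in its formulas yields exactly the displayed brackets: the first component $-s^{-1}[\Delta,f]$ of $l_1$ and the whole family $l_k(\theta_1,\dots,\theta_k)=P([\cdots[[\Delta,\theta_1]^{MN},\theta_2]^{MN},\cdots,\theta_k]^{MN})$ vanish, while $l_1(s^{-1}f,\theta)=P(f)$, $l_2(s^{-1}f,s^{-1}g)=(-1)^{|f|}s^{-1}[f,g]^{MN}$, and $l_k(s^{-1}f,\theta_1,\dots,\theta_{k-1})=P([\cdots[[f,\theta_1]^{MN},\theta_2]^{MN},\cdots,\theta_{k-1}]^{MN})$ survive. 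The only real subtlety, hence the step I would write out most carefully, is the well-definedness of $P$ as a genuine projection, which rests on the direct-sum decomposition of the cochain space into homogeneous bidegree components; once that is granted, both the abelian property of $\h$ and the subalgebra property of $\ker P$ follow directly from the two bidegree lemmas, and the remainder is a formal substitution into Theorem \ref{direct-sum}.
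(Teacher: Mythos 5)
Your proposal is correct and follows essentially the same route as the paper, which simply asserts that $\h$ is abelian and $\Ker(P)$ is a graded Lie subalgebra (facts the paper treats as ``obvious'' consequences of Lemmas \ref{bidegree-1} and \ref{bidegree-2}) and then invokes Theorem \ref{direct-sum}. Your write-up just makes explicit the bidegree bookkeeping and the specialization $\Delta=0$ that the paper leaves implicit.
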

\begin{proof}
Obviously, $\h$ is an abelian graded Lie subalgebra of $(L,[\cdot,\cdot])$ and $\Ker(P)$ is a graded Lie subalgebra of $(L,[\cdot,\cdot])$. Thus, $(L,\h,P,\Delta)$ is a $V$-data. By Theorem \ref{direct-sum}, we obtain an $L_{\infty}$-algebra $(s^{-1}L\oplus \h,\{l_k\}_{k=1}^{+\infty})$.
\end{proof}
We set $L'$ by
\begin{equation*}
L'=\oplus_{n=0}^{+\infty}C^{n|0}(\g\oplus V,\g\oplus V)= \oplus_{n=0}^{+\infty}(\Hom(\wedge^n\g\otimes\g,\g)\oplus\Hom(\wedge^n\g\otimes V,V)\oplus\Hom(\wedge^{n-1}\g\otimes V\otimes\g,V)).
\end{equation*}
Now, we give the main result in this subsection.
\begin{thm}\label{L-algebra}
With the above notation, $(s^{-1}L'\oplus \h,\{l_k\}_{k=1}^{+\infty})$ is an $L_{\infty}$-algebra, where $l_k$ are given by
\begin{eqnarray*}
l_2(s^{-1}f,s^{-1}g)&=&(-1)^{|f|}s^{-1}[f,g]^{MN},\\
l_2(s^{-1}f,\theta)&=&P([f, \theta]^{MN}),\\
l_k(s^{-1}f,\theta_1,\dots,\theta_{k-1})&=&0,\quad k\geq 3,
\end{eqnarray*}
for homogeneous elements $\theta,\theta_1,\dots,\theta_{k-1}\in\h$, $f,g\in L'$ and all the other possible combinations vanish.

Moreover, for all $\pi\in\Hom(\g\otimes \g,V)$, $\rho\in\Hom(\g\otimes V,V)$, $\mu\in\Hom(V\otimes \g,V)$ and $D\in\Hom(\g,V)$, $(s^{-1}(\pi+\rho+\mu),D)$ is a Maurer-Cartan element of the $L_\infty$-algebra $(s^{-1}L'\oplus \h,\{l_k\}_{k=1}^{+\infty})$ if and only if $(\g,D,\rho,\mu)$ is a pre-LieDer pair.
\end{thm}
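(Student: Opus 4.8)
The plan is to obtain both assertions from the $V$-data structure of the preceding proposition, so that essentially everything reduces to bidegree bookkeeping together with one explicit bracket evaluation.

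For the first assertion I would apply the ``Moreover'' part of Theorem~\ref{direct-sum}. The excerpt already records that $L'=\oplus_{k\geq 0}C^{k|0}(\g\oplus V,\g\oplus V)$ is a graded Lie subalgebra of $(L,[\cdot,\cdot]^{MN})$, and since $\Delta=0$ we have $[\Delta,L']=0\subseteq L'$; hence $s^{-1}L'\oplus\h$ is an $L_\infty$-subalgebra. It then remains only to simplify the inherited brackets, which I would do with the bidegree lemmas. First, $l_1$ vanishes on the subalgebra: its $s^{-1}L'$-component is $-s^{-1}[\Delta,f]=0$, while its $\h$-component $P(f)$ vanishes because every $f\in L'$ is of bidegree $\cdot|0$ whereas $\h$ consists of elements of bidegree $\cdot|-1$, so $f$ has no $\h$-component. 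Second, for $k\geq 3$ and $\theta_1,\dots,\theta_{k-1}\in\h$, iterating Lemma~\ref{bidegree-1} shows that $[\cdots[[f,\theta_1]^{MN},\theta_2]^{MN},\cdots,\theta_{k-1}]^{MN}$ has bidegree of the form $\cdot\,|\,-(k-1)$ with $-(k-1)\leq -2$; since no nonzero cochain has second bidegree index below $-1$, this iterated bracket is $0$, giving $l_k=0$ for $k\geq 3$. The two $l_2$-formulas are exactly the restrictions written in Theorem~\ref{direct-sum}, where $P([f,\theta]^{MN})$ indeed lands in $\h$ because $[f,\theta]^{MN}$ has bidegree $\cdot|-1$ by Lemma~\ref{bidegree-1}.

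For the Maurer--Cartan characterization I would set $\alpha=(s^{-1}(\pi+\rho+\mu),D)$, a degree-$0$ element since $\pi+\rho+\mu\in C^{1|0}$ and $D\in C^{1|-1}$. Because $l_1=0$ and $l_k=0$ for $k\geq 3$, the Maurer--Cartan equation collapses to $\tfrac{1}{2}l_2(\alpha,\alpha)=0$. Writing $\alpha$ as the sum of its $s^{-1}L'$-part $X=s^{-1}(\pi+\rho+\mu)$ and its $\h$-part $D$, and using that $l_2$ vanishes on two $\h$-arguments, $l_2(\alpha,\alpha)$ splits into an $s^{-1}L'$-component equal to $l_2(X,X)=(-1)^{|\pi+\rho+\mu|}s^{-1}[\pi+\rho+\mu,\pi+\rho+\mu]^{MN}$ and an $\h$-component equal to a nonzero scalar multiple of $l_2(X,D)=P([\pi+\rho+\mu,D]^{MN})$. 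By Proposition~\ref{MC-prelierep} the first component vanishes exactly when $(V;\rho,\mu)$ is a representation of the pre-Lie algebra $(\g,\pi)$.

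The remaining and computationally central step is to show that the $\h$-component vanishes precisely when $D$ is a derivation. Here $\pi+\rho+\mu$ has bidegree $1|0$ and $D$ has bidegree $1|-1$, so by Lemma~\ref{bidegree-1} the bracket $[\pi+\rho+\mu,D]^{MN}$ already has bidegree $2|-1$, hence lies in $\h$ and equals its own projection; as an element of $\Hom(\g\otimes\g,V)$ it is evaluated on a pair $(x,y)$. Using the explicit $\circ$-operation on the horizontal lifts, the term $\widehat{\pi+\rho+\mu}\circ\hat{D}$ contributes $\rho(x)D(y)+\mu(y)D(x)$ (the $\rho$- and $\mu$-parts selecting the argument lying in $V$) while $\hat{D}\circ\widehat{\pi+\rho+\mu}$ contributes $D(x\cdot y)$, whence
\[
[\pi+\rho+\mu,D]^{MN}(x,y)=\rho(x)D(y)+\mu(y)D(x)-D(x\cdot y).
\]
This vanishes for all $x,y\in\g$ if and only if $D(x\cdot y)=\rho(x)D(y)+\mu(y)D(x)$, i.e.\ if and only if $D$ is a derivation. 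Combining the two components shows that $\alpha$ is a Maurer--Cartan element if and only if $(\g,D,\rho,\mu)$ is a pre-LieDer pair. I expect the main obstacle to be the sign and normalization bookkeeping in this last evaluation---tracking the Koszul signs in the $\circ$-operation and the desuspension---rather than any structural difficulty; the bidegree reductions are routine.
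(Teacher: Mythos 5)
Your proposal is correct and follows essentially the same route as the paper: both invoke the ``Moreover'' part of Theorem~\ref{direct-sum} with $\Delta=0$ and $[\Delta,L']=0$ to get the $L_\infty$-structure, reduce the Maurer--Cartan equation to $\tfrac12 l_2(\alpha,\alpha)=0$, apply Proposition~\ref{MC-prelierep} to the $s^{-1}L'$-component, and evaluate $[\pi+\rho+\mu,D]^{MN}(x,y)=\rho(x)D(y)+\mu(y)D(x)-D(x\cdot y)$ to identify the $\h$-component with the derivation condition. Your bidegree justifications for $l_1=0$ and $l_k=0$ ($k\geq 3$) are slightly more explicit than the paper's, but the argument is the same.
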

\begin{proof}
Obviously, $L'$ is a graded Lie subalgebra. Since $\Delta=0$, we have $[\Delta,L']$=0. Thus, $(s^{-1}L'\oplus \h,\{l_k\}_{k=1}^{+\infty})$ is an $L_{\infty}$-algebra. It is straightforward to deduce that
\begin{equation*}
[\pi+\rho+\mu,D]^{MN}\in \Hom(\g\otimes \g,V), \quad [[\pi+\rho+\mu,D]^{MN},D]^{MN}=0.
\end{equation*}
Then, we have
\begin{eqnarray*}
&&\frac{1}{2}l_2((s^{-1}(\pi+\rho+\mu),D), (s^{-1}(\pi+\rho+\mu),D))\\
&=&\frac{1}{2}l_2(s^{-1}(\pi+\rho+\mu),s^{-1}(\pi+\rho+\mu))+l_2(s^{-1}(\pi+\rho+\mu),D)\\
&=&(-\frac{1}{2}s^{-1}[\pi+\rho+\mu,\pi+\rho+\mu]^{MN},P[\pi+\rho+\mu,D]^{MN})\\
&=&(-\frac{1}{2}s^{-1}[\pi+\rho+\mu,\pi+\rho+\mu]^{MN},[\pi+\rho+\mu,D]^{MN}).
\end{eqnarray*}
Thus, $(s^{-1}(\pi+\rho+\mu),D)$ is a Maurer-Cartan element of the $L_\infty$-algebra $(s^{-1}L'\oplus \h,\{l_k\}_{k=1}^{+\infty})$ if and only if
\begin{equation*}
[\pi+\rho+\mu,\pi+\rho+\mu]^{MN}=0, \quad [\pi+\rho+\mu,D]^{MN}=0.
\end{equation*}
By Proposition \ref{MC-prelierep}, $[\pi+\rho+\mu,\pi+\rho+\mu]^{MN}=0$ if and only if $(V;\rho,\mu)$ is a representation of the pre-Lie algebra $(\g,\pi)$. For all $x,y\in \g$, we have
\begin{eqnarray*}
0&=&[\pi+\rho+\mu,D]^{MN}(x,y)\\
&=&\rho(x,D(y))+\mu(D(x),y)-D(\pi(x,y))\\
&=&\rho(x)D(y)+\mu(y)D(x)-D(x\cdot y),
\end{eqnarray*}
which implies that $[\pi+\rho+\mu,D]^{MN}=0$ if and only if $D$ is a derivation. Thus, $(s^{-1}(\pi+\rho+\mu),D)$ is a Maurer-Cartan element of the $L_\infty$-algebra $(s^{-1}L'\oplus \h,\{l_k\}_{k=1}^{+\infty})$ if and only if $(\g,D,\rho,\mu)$ is a pre-LieDer pair. This finishes the proof.
\end{proof}
\begin{thm}
Let $(\g,D,\rho,\mu)$ be a pre-LieDer pair. Then $(s^{-1}L'\oplus \h,\{l_k^{(s^{-1}(\pi+\rho+\mu),D)}\}_{k=1}^{+\infty})$ is an $L_\infty$-algebra, where $l_k^{(s^{-1}(\pi+\rho+\mu),D)}$ are given by
\begin{eqnarray*}
l_1^{(s^{-1}(\pi+\rho+\mu),D)}(s^{-1}f,\theta)&=&l_2((s^{-1}(\pi+\rho+\mu),D),(s^{-1}f,\theta)),\\
l_2^{(s^{-1}(\pi+\rho+\mu),D)}((s^{-1}f_1,\theta_1),(s^{-1}f_2,\theta_2))&=&l_2((s^{-1}f_1,\theta_1),(s^{-1}f_2,\theta_2)),\\
l_k^{(s^{-1}(\pi+\rho+\mu),D)}((s^{-1}f_1,\theta_1),\dots,(s^{-1}f_k\theta_k))&=&0,\quad k\geq 3,
\end{eqnarray*}

Furthermore, for all $\pi'\in\Hom(\g\otimes \g,V)$, $\rho'\in\Hom(\g\otimes V,V)$, $\mu'\in\Hom(V\otimes \g,V)$ and $D'\in\Hom(\g,V)$, $(\g,\pi+\pi',D+D',\rho+\rho',\mu+\mu')$ is a pre-LieDer pair if and only if $(s^{-1}(\pi'+\rho'+\mu'),D')$ is a Maurer-Cartan element of the twist $L_\infty$-algebra $(s^{-1}L'\oplus \h,\{l_k^{(s^{-1}(\pi+\rho+\mu),D)}\}_{k=1}^{+\infty})$.
\end{thm}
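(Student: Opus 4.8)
The plan is to obtain both assertions from the general machinery already established, so that essentially no fresh computation with the bracket $[\cdot,\cdot]^{MN}$ is needed: the first assertion is a direct instance of Theorem \ref{twistLin}, and the second combines Theorem \ref{twisting-deformation} with the Maurer--Cartan characterization of Theorem \ref{L-algebra}.

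First I would recall from Theorem \ref{L-algebra} that $(s^{-1}L'\oplus\h,\{l_k\}_{k=1}^{+\infty})$ is an $L_\infty$-algebra whose only nonzero structure map is $l_2$. Indeed $l_1=0$ here, since with $\Delta=0$ the $V$-data formula gives $l_1(s^{-1}f,\theta)=(0,P(f))$, and $P(f)=0$ for $f\in L'$ because $L'$ is concentrated in bidegree $\ast|0$ while the image $\h$ of $P$ is concentrated in bidegree $\ast|-1$; moreover $l_k=0$ for $k\geq 3$. Because $(\g,D,\rho,\mu)$ is a pre-LieDer pair, Theorem \ref{L-algebra} also supplies that $\alpha:=(s^{-1}(\pi+\rho+\mu),D)$ is a Maurer--Cartan element of this $L_\infty$-algebra. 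Hence Theorem \ref{twistLin} applies verbatim and produces the twisted $L_\infty$-algebra $(s^{-1}L'\oplus\h,\{l_k^{\alpha}\}_{k=1}^{+\infty})$.

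The next step is to make the twisted maps explicit and match them to the stated formulas. By definition $l_k^{\alpha}(y_1,\dots,y_k)=\sum_{n\ge 0}\frac{1}{n!}l_{k+n}(\alpha,\dots,\alpha,y_1,\dots,y_k)$, and since $l_m=0$ for $m=1$ and for $m\geq 3$, only terms with $k+n=2$ can survive. For $k=1$ this leaves the single contribution $n=1$, giving $l_1^{\alpha}(s^{-1}f,\theta)=l_2(\alpha,(s^{-1}f,\theta))$; for $k=2$ only $n=0$ survives, giving $l_2^{\alpha}=l_2$; and for $k\geq 3$ no term survives, so $l_k^{\alpha}=0$. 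These are exactly the formulas in the statement. I expect this truncation argument --- establishing that the infinite twist sum collapses precisely because the underlying $L_\infty$-structure is concentrated in arity two --- to be the one point deserving explicit care, though it is elementary once the sparseness of $\{l_k\}$ is noted.

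For the ``furthermore'' part, write $\alpha'=(s^{-1}(\pi'+\rho'+\mu'),D')$, so that $\alpha+\alpha'=(s^{-1}((\pi+\pi')+(\rho+\rho')+(\mu+\mu')),D+D')$. Applying the Maurer--Cartan characterization of Theorem \ref{L-algebra} to the data $(\pi+\pi',\rho+\rho',\mu+\mu',D+D')$ shows that $\alpha+\alpha'$ is a Maurer--Cartan element of the untwisted algebra $(s^{-1}L'\oplus\h,\{l_k\})$ if and only if $(\g,\pi+\pi',D+D',\rho+\rho',\mu+\mu')$ is a pre-LieDer pair. On the other hand, Theorem \ref{twisting-deformation} gives that $\alpha+\alpha'$ is a Maurer--Cartan element of $(s^{-1}L'\oplus\h,\{l_k\})$ if and only if $\alpha'$ is a Maurer--Cartan element of the twisted algebra $(s^{-1}L'\oplus\h,\{l_k^{\alpha}\})$. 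Chaining these two equivalences yields precisely the claimed correspondence, which finishes the proof.
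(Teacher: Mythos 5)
Your proposal is correct and follows essentially the same route as the paper: the existence of the twist is Theorem \ref{twistLin} applied to the Maurer--Cartan element supplied by Theorem \ref{L-algebra}, and the ``furthermore'' part is exactly the paper's chain of equivalences via Theorem \ref{L-algebra} and Theorem \ref{twisting-deformation}. Your explicit truncation of the twist sum (using that $l_1=0$ on $s^{-1}L'\oplus\h$ and $l_k=0$ for $k\geq 3$) is a detail the paper leaves implicit, but it is the right justification for the stated formulas.
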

\begin{proof}
By Theorem \ref{L-algebra}, $(\g,\pi+\pi',D+D',\rho+\rho',\mu+\mu')$ is a pre-LieDer pair if and only if $(s^{-1}(\pi+\pi'+\rho+\rho'+\mu+\mu'),D+D')$ is a Maurer-Cartan element of the $L_\infty$-algebra $(s^{-1}L'\oplus \h,\{l_k\}_{k=1}^{+\infty})$. By Theorem \ref{twisting-deformation}, $(s^{-1}(\pi+\pi'+\rho+\rho'+\mu+\mu'),D+D')$ is a Maurer-Cartan element of the $L_\infty$-algebra $(s^{-1}L'\oplus \h,\{l_k\}_{k=1}^{+\infty})$ if and only if $(s^{-1}(\pi'+\rho'+\mu'),D')$ is a Maurer-Cartan element of the twist $L_\infty$-algebra $(s^{-1}L'\oplus \h,\{l_k^{(s^{-1}(\pi+\rho+\mu),D)}\}_{k=1}^{+\infty})$.
\end{proof}
\section{Cohomologies and infinitesimal deformations of pre-LieDer pairs}\label{sec:infinitesimal-deformation}
In this section, first, we give the cohomology of pre-LieDer pairs. Then, we study infinitesimal deformations of pre-LieDer pairs by using this cohomology . We show that equivalent infinitesimal deformations are in the same second cohomology group. Finally, we use this cohomology to give the cohomology of  regular pre-LieDer pairs.

In this subsection, we will also denote the pre-Lie multiplication $\cdot$ by $\pi$.
\subsection{Cohomologies of pre-LieDer pairs}
\
\newline
\indent
Let $(\g,\pi,D,\rho,\mu)$ be a pre-LieDer pair. Define the set of $0$-cochains $C^0(\g,\pi,\rho,\mu,D)$ to be $0$ and define the set of $1$-cochains $C^1(\g,\pi,\rho,\mu,D)$ to be $\Hom(\g,\g)\oplus \Hom(V,V)$. For $n\geq 2$, define the set of $n$-cochains $C^n(\g,\pi,\rho,\mu,D)$ by
\begin{eqnarray*}
&&C^n(\g,\pi,\rho,\mu,D):=C^n(\g,\pi,\rho,\mu)\oplus C^{n-1}(\g;V)\\
&=&\Hom(\wedge^{n-1}\g\otimes \g,\g)\oplus \Hom(\wedge^{n-1}\g\otimes V,V)\oplus \Hom(\wedge^{n-2}\g\otimes V\otimes \g,V)\oplus \Hom(\wedge^{n-2}\g\otimes \g,V).
\end{eqnarray*}
Define the coboundary operator $\huaD:C^n(\g,\pi,\rho,\mu,D)\longrightarrow C^{n+1}(\g,\pi,\rho,\mu,D)$ by
\begin{equation}\label{cohomology-operator}
\huaD (f,\theta)=(-1)^{n-2}l_1^{(s^{-1}(\pi+\rho+\mu),D)}(s^{-1}f,\theta),
\end{equation}
where $f\in \Hom(\wedge^{n-1}\g\otimes \g,\g)\oplus \Hom(\wedge^{n-1}\g\otimes V,V)\oplus \Hom(\wedge^{n-2}\g\otimes V\otimes \g,V)$ and $\theta\in  \Hom(\wedge^{n-2}\g\otimes \g,V)$.
\begin{thm}
$(\oplus_{n=1}^{+\infty}C^n(\g,\pi,\rho,\mu,D),\huaD)$ is a cochain complex, i.e. $\huaD\circ \huaD=0$.
\end{thm}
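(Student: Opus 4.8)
The plan is to reduce the claim $\huaD\circ\huaD=0$ to the already established fact that $\bigl(s^{-1}L'\oplus\h,\{l_k^{(s^{-1}(\pi+\rho+\mu),D)}\}_{k=1}^{+\infty}\bigr)$ is an $L_\infty$-algebra, via Theorem \ref{twistLin}. Indeed, by the defining formula \eqref{cohomology-operator}, the operator $\huaD$ is, up to the sign $(-1)^{n-2}$, nothing but the unary bracket $l_1^{(s^{-1}(\pi+\rho+\mu),D)}$ of the twist $L_\infty$-algebra. The generalized Jacobi identity of an $L_\infty$-algebra, specialized to $n=1$, asserts precisely that $l_1^{(s^{-1}(\pi+\rho+\mu),D)}\circ l_1^{(s^{-1}(\pi+\rho+\mu),D)}=0$. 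So the essential input is that $(s^{-1}(\pi+\rho+\mu),D)$ really is a Maurer-Cartan element, which is exactly the content of Theorem \ref{L-algebra} (since $(\g,\pi,D,\rho,\mu)$ is assumed to be a pre-LieDer pair), and then Theorem \ref{twistLin} guarantees that the twisted structure is an $L_\infty$-algebra with a differential $l_1^{(s^{-1}(\pi+\rho+\mu),D)}$ squaring to zero.

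First I would record that $(s^{-1}(\pi+\rho+\mu),D)$ is a Maurer-Cartan element, citing Theorem \ref{L-algebra}. Next I would invoke Theorem \ref{twistLin} to conclude that the twisted collection $\{l_k^{(s^{-1}(\pi+\rho+\mu),D)}\}$ satisfies the $L_\infty$ axioms; in particular the $n=1$ instance of the generalized Jacobi identity gives $l_1^{(s^{-1}(\pi+\rho+\mu),D)}\bigl(l_1^{(s^{-1}(\pi+\rho+\mu),D)}(\cdot)\bigr)=0$. The final step is a sign bookkeeping check: applying \eqref{cohomology-operator} twice, on an $n$-cochain one gets
\begin{equation*}
\huaD\circ\huaD=(-1)^{(n-1)-2}(-1)^{n-2}\,l_1^{(s^{-1}(\pi+\rho+\mu),D)}\circ l_1^{(s^{-1}(\pi+\rho+\mu),D)},
\end{equation*}
and since the scalar factor is merely $\pm1$, the vanishing of the composite of $l_1$-brackets forces $\huaD\circ\huaD=0$. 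I would also remark that $l_1^{(s^{-1}(\pi+\rho+\mu),D)}$ preserves the cochain grading on $\oplus_{n=1}^{+\infty}C^n(\g,\pi,\rho,\mu,D)$, raising $n$ by one, so that $\huaD$ is genuinely a coboundary operator on the stated complex.

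The only genuine subtlety, and the step I would treat most carefully, is verifying that the abstract differential $l_1^{(s^{-1}(\pi+\rho+\mu),D)}$ actually lands in the claimed cochain spaces $C^{n+1}(\g,\pi,\rho,\mu,D)$ under the identification $C^n(\g,\pi,\rho,\mu,D)=C^n(\g,\pi,\rho,\mu)\oplus C^{n-1}(\g;V)$. This amounts to checking, using the bidegree calculus of Lemma \ref{bidegree-1} and Lemma \ref{bidegree-2}, that the brackets $[\pi+\rho+\mu,f]^{MN}$ and $P([\pi+\rho+\mu,\theta]^{MN})$ appearing in $l_1^{(s^{-1}(\pi+\rho+\mu),D)}$ produce components of the correct bidegrees $k|0$ and $k|-1$, so that the decomposition into the four summands $\Hom(\wedge^{n-1}\g\otimes\g,\g)$, $\Hom(\wedge^{n-1}\g\otimes V,V)$, $\Hom(\wedge^{n-2}\g\otimes V\otimes\g,V)$ and $\Hom(\wedge^{n-2}\g\otimes\g,V)$ is respected. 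Once this bookkeeping is in place, the nilpotency $\huaD\circ\huaD=0$ is a formal consequence of the $L_\infty$ structure and requires no further computation.
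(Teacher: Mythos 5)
Your proposal is correct and follows exactly the paper's argument: the paper also deduces $\huaD\circ\huaD=0$ directly from $l_1^{(s^{-1}(\pi+\rho+\mu),D)}\circ l_1^{(s^{-1}(\pi+\rho+\mu),D)}=0$, which holds because the twisted structure is an $L_\infty$-algebra. Your additional remarks on sign bookkeeping and on checking that $l_1^{(s^{-1}(\pi+\rho+\mu),D)}$ respects the cochain grading are sensible elaborations of details the paper leaves implicit.
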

\begin{proof}
Since $(s^{-1}L'\oplus \h,\{l_k^{(s^{-1}(\pi+\rho+\mu),D)}\}_{k=1}^{+\infty})$ is an $L_\infty$-algebra, we have $l_1^{(s^{-1}(\pi+\rho+\mu),D)}\circ l_1^{(s^{-1}(\pi+\rho+\mu),D)}=0$, which implies that $\huaD\circ \huaD=0$.
\end{proof}
\begin{defi}
The cohomology of the cochain complex $(\oplus_{n=1}^{+\infty}C^n(\g,\pi,\rho,\mu,D),\huaD)$ is called the cohomology of the pre-LieDer pair $(\g,D,\rho,\mu)$. The corresponding cohomology group is denoted by $H^n(\g,\pi,\rho,\mu,D)$.
\end{defi}
In the sequel, we give the explicit formula of the operator $\huaD$. For all $f\in \Hom(\wedge^{n-1}\g\otimes \g,\g)\oplus \Hom(\wedge^{n-1}\g\otimes V,V)\oplus \Hom(\wedge^{n-2}\g\otimes V\otimes \g,V)$ and $\theta\in  \Hom(\wedge^{n-2}\g\otimes \g,V)$, we have
\begin{equation*}
||[\pi+\rho+\mu,f]^{MN}||=n|0, \quad ||[\pi+\rho+\mu,\theta]^{MN}||=n|-1,\quad ||[f, D]^{MN}||=n|-1.
\end{equation*}

Thus, by \eqref{cohomology-operator},we have
\begin{eqnarray*}
\huaD(f,\theta)&=&(-1)^{n-2}l_1^{(s^{-1}(\pi+\rho+\mu),D)}(s^{-1}f,\theta)\\
&=&(-1)^{n-2}l_2((s^{-1}(\pi+\rho+\mu),D),(s^{-1}f,\theta))\\
&=&(-1)^{n-2}(l_2(s^{-1}(\pi+\rho+\mu),s^{-1}f)+l_2(s^{-1}(\pi+\rho+\mu),\theta)+l_2(D,s^{-1}f))\\
&=&(-1)^{n-2}(-s^{-1}[\pi+\rho+\mu,f]^{MN},P[\pi+\rho+\mu,\theta]^{MN}+P[f,D]^{MN})\\
&=&(-1)^{n-2}(-s^{-1}[\pi+\rho+\mu,f]^{MN},[\pi+\rho+\mu,\theta]^{MN}+[f,D]^{MN})\\
&=&(\partial f,\dM \theta+\delta f).
\end{eqnarray*}
\begin{lem}
With the above notation, for all $f=(f_{\g},f_{\rho},f_{\mu})\in \Hom(\wedge^{n-1}\g\otimes \g,\g)\oplus \Hom(\wedge^{n-1}\g\otimes V,V)\oplus \Hom(\wedge^{n-2}\g\otimes V\otimes \g,V)$, $\delta:\Hom(\wedge^{n-1}\g\otimes \g,\g)\oplus \Hom(\wedge^{n-1}\g\otimes V,V)\oplus \Hom(\wedge^{n-2}\g\otimes V\otimes \g,V)\longrightarrow \Hom(\wedge^{n-1}\g\otimes \g,V)$ is given by
\begin{eqnarray*}
(\delta f)(x_1,\dots,x_{n-1}x_n)&=&\sum_{i=1}^{n-1}(-1)^{i+1}f_{\mu}(x_1,\dots,\hat{x_i},\dots,x_{n-1},D(x_i),x_n)\\
&&+(-1)^{n-2}(f_{\rho}(x_1,\dots,x_{n-1},D(x_n))-D(f_{\g}(x_1,\dots,x_n))).
\end{eqnarray*}
\end{lem}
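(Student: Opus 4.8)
The plan is to unwind the definition $\huaD(f,\theta)=(-1)^{n-2}l_1^{(s^{-1}(\pi+\rho+\mu),D)}(s^{-1}f,\theta)$ down to its $\h$-component. From the computation preceding the lemma we already have $\huaD(f,\theta)=(\partial f,\dM\theta+\delta f)$ with $\delta f$ arising as $(-1)^{n-2}P[f,D]^{MN}$. First I would observe that $\|f\|=(n-1)|0$ and $\|D\|=1|-1$, so by Lemma \ref{bidegree-1} the bracket $[f,D]^{MN}$ already has bidegree $n|-1$ and hence lives in $\h$. Consequently $P$ acts as the identity on it and, under the isomorphism $C^{n|-1}(\g\oplus V,\g\oplus V)\cong\Hom(\wedge^{n-1}\g\otimes\g,V)$, we may identify
\begin{equation*}
\delta f=(-1)^{n-2}[f,D]^{MN}\in\Hom(\wedge^{n-1}\g\otimes\g,V).
\end{equation*}
Since the Matsushima--Nijenhuis degrees of $f$ and $D$ are $n-1$ and $0$, the defining sign $(-1)^{(n-1)\cdot 0}=1$ gives the clean expansion $[f,D]^{MN}=f\circ D-D\circ f$, which I would then evaluate term by term.

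The key observation driving the computation is that the targets and sources force which component of $f=(f_\g,f_\rho,f_\mu)$ can contribute. For $D\circ f$, only the first, degenerate sum in the definition of $\circ$ survives (the other requires $p-1=-1$ slots), and since $D:\g\to V$ only accepts a $\g$-valued argument, it sees exactly the $\g$-valued output $f_\g$; this yields $D\circ f(x_1,\dots,x_n)=D(f_\g(x_1,\dots,x_n))$. For $f\circ D$ I would split the two sums: the second sum inserts $D(x_n)$ into the distinguished last slot, so the input $(x_1,\dots,x_{n-1},D(x_n))$ lands in $\huaG^{n-1,1}$ with the $V$-entry in the tensor position, selecting $f_\rho$ and giving $f_\rho(x_1,\dots,x_{n-1},D(x_n))$ with circle-sign $(-1)^{(n-1)\cdot0}=1$. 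The first sum runs over $\mathbb{S}(0,1,n-2)$, inserting $D(x_i)$ into an interior wedge slot with sign $(-1)^{i-1}$, producing $\hat f_\mu(D(x_i),x_1,\dots,\hat{x_i},\dots,x_{n-1},x_n)$; here the $V$-entry sits in the wedge, so the horizontal-lift formula for $f_\mu$ forces the unique unshuffle carrying $D(x_i)$ into the $V$-position, contributing a Koszul factor $(-1)^{n-2}$ and giving $(-1)^{n-2}f_\mu(x_1,\dots,\hat{x_i},\dots,x_{n-1},D(x_i),x_n)$.

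Finally I would assemble the pieces and multiply by the overall $(-1)^{n-2}$: the interior terms acquire $(-1)^{n-2}\cdot(-1)^{i-1}\cdot(-1)^{n-2}=(-1)^{i+1}$, while the $f_\rho$ and $D(f_\g)$ terms each pick up a single factor $(-1)^{n-2}$ (the latter with the minus sign from $-D\circ f$), reproducing exactly the claimed formula. The main obstacle is purely the sign bookkeeping: the three independent sources of signs — the signs in the circle product, the unshuffle (Koszul) sign in the lift of $f_\mu$ that relocates $D(x_i)$ to its canonical $V$-slot, and the global $(-1)^{n-2}$ — must be shown to combine to $(-1)^{i+1}$ and $(-1)^{n-2}$ respectively. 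I expect the cleanest way to keep this under control is to verify the component-selection claims (that only $f_\mu,f_\rho,f_\g$ can appear, in the stated positions) first, and only afterward commit to evaluating each unshuffle sign, so that the combinatorics and the sign arithmetic are separated.
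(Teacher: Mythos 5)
Your proposal is correct and follows essentially the same route as the paper: both start from $\delta f=(-1)^{n-2}[f,D]^{MN}$, expand the Matsushima--Nijenhuis bracket to see that only $f_\g$, $f_\rho$, $f_\mu$ contribute in the stated positions, and then move $D(x_i)$ to its canonical $V$-slot, the resulting Koszul sign $(-1)^{n-2}$ cancelling the global prefactor to leave $(-1)^{i+1}$. The paper simply records this as a two-line computation, whereas you make the bidegree and lift bookkeeping explicit; the content is the same.
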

\begin{proof}
For all $x_1,\dots,x_n\in \g$, we have
\begin{eqnarray*}
&&(\delta f)(x_1,\dots,x_n)\\
&=&(-1)^{n-2}[f,D]^{MN}(x_1,\dots,x_n)\\
&=&(-1)^{n-2}(\sum_{i=1}^{n-1}(-1)^{i+1}f_{\mu}(D(x_i),x_1,\dots,\hat{x_i},\dots,x_n)+f_{\rho}(x_1,\dots,x_{n-1},D(x_n))-D(f_{\g}(x_1,\dots,x_n)))\\
&=&\sum_{i=1}^{n-1}(-1)^{i+1}f_{\mu}(x_1,\dots,\hat{x_i},\dots,x_{n-1},D(x_i),x_n)+(-1)^{n-2}(f_{\rho}(x_1,\dots,x_{n-1},D(x_n))-D(f_{\g}(x_1,\dots,x_n))).
\end{eqnarray*}
The proof is finished.
\end{proof}
The following diagram well explain the above operators:
\begin{center}
   \begin{tikzpicture}
      $$\matrix (m) [matrix of math nodes,row sep=3em,column sep=5em,minimum width=2em] {
			 C^n(\g,\pi,\rho,\mu) &  C^{n+1}(\g,\pi,\rho,\mu) & C^{n+2}(\g,\pi,\rho,\mu)  \\
		\huaC^{n-1}(\g;V) &  \huaC^n(\g;V) & \huaC^{n+1}(\g;V)\\
		};
		\path[->,auto] (m-2-1) edge node[swap]{$\dM$}                  (m-2-2);
        \path[->,auto] (m-2-2) edge node[swap]{$\dM$}                  (m-2-3);
		\path[->,auto] (m-1-1) edge node {$\partial$}                  (m-1-2);
        \path[->,auto] (m-1-2) edge node {$\partial$}                  (m-1-3);
        \path[->,below] (m-1-1) edge node[above] {$\delta$}                  (m-2-2);
        \path[->,below] (m-1-2) edge node[above] {$\delta$}                  (m-2-3);
		\path   (m-2-1) edge[draw=none]   node [sloped] {$\oplus$} (m-1-1);
		\path   (m-2-2) edge[draw=none]   node [sloped] {$\oplus$} (m-1-2);
        \path   (m-2-3) edge[draw=none]   node [sloped] {$\oplus$} (m-1-3);$$
	\end{tikzpicture}
\end{center}
At the end of this subsection, we give the relation between various cohomologies groups.
\begin{thm}
There is a short exact sequence of the cochain complexes:
$$
0\longrightarrow(\oplus_{n=0}^{+\infty}C^n(\g;V),\dM)\stackrel{\iota}{\longrightarrow}(\oplus_{n=0}^{+\infty}C^n(\g,\pi,\rho,\mu,D),\huaD)\stackrel{p}{\longrightarrow} (\oplus_{n=0}^{+\infty}C^n(\g,\pi,\rho,\mu),\partial)\longrightarrow 0,
$$
where $\iota(\theta)=(0,\theta)$ and $p(f,\theta)=f$ for all $f\in C^n(\g,\pi,\rho,\mu)$ and $\theta\in C^{n-1}(\g;V)$.

Consequently, there is a long exact sequence of the cohomology groups:
$$
\cdots\longrightarrow H^n(\g;V)\stackrel{H^n(\iota)}{\longrightarrow} H^n(\g,\pi,\rho,\mu,D)\stackrel{H^n(p)}{\longrightarrow} H^n(\g,\pi,\rho,\mu)\stackrel{c^n}{\longrightarrow}H^{n+1}(\g;V)\longrightarrow \cdots,
$$
where the connecting map $c^n$ is defined by $c^n([\alpha])=[\delta(\alpha)]$, for all $[\alpha]\in H^n(\g,\pi,\rho,\mu).$
\end{thm}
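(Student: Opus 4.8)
The plan is to read the middle term through its direct-sum decomposition $C^n(\g,\pi,\rho,\mu,D)=C^n(\g,\pi,\rho,\mu)\oplus C^{n-1}(\g;V)$, on which $\huaD$ acts by the triangular formula $\huaD(f,\theta)=(\partial f,\dM\theta+\delta f)$ computed above, and to treat this complex as (a degree shift of) the mapping cone of $\delta$. First I would verify that $\iota$ and $p$ are morphisms of cochain complexes. For the inclusion, $\huaD\iota(\theta)=\huaD(0,\theta)=(0,\dM\theta)=\iota(\dM\theta)$, so $\iota$ intertwines $\dM$ with $\huaD$; for the projection, $p\,\huaD(f,\theta)=\partial f=\partial\, p(f,\theta)$, so $p$ is a chain map as well. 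Degreewise exactness is then immediate from the direct sum: $\iota$ is the injection onto the second summand, $p$ is the projection onto the first, and $\ker p=\{(0,\theta)\}=\Img\iota$. This gives the asserted short exact sequence of cochain complexes.

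The substantive identity behind everything is the compatibility $\dM\circ\delta+\delta\circ\partial=0$, and I would extract it directly from $\huaD\circ\huaD=0$ rather than reprove it by hand. Expanding, $\huaD^2(f,\theta)=(\partial^2 f,\ \dM^2\theta+\dM\delta f+\delta\partial f)$, and since $\partial^2=0$ and $\dM^2=0$ this forces $\dM\delta f+\delta\partial f=0$. Consequently $\delta$ behaves as an (anti)chain map from $(\oplus_n C^n(\g,\pi,\rho,\mu),\partial)$ to $(\oplus_n C^n(\g;V),\dM)$: it carries $\partial$-cocycles to $\dM$-cocycles and $\partial$-coboundaries to $\dM$-coboundaries, hence descends to cohomology. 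With the short exact sequence established, I would then invoke the standard zig-zag (snake) lemma for short exact sequences of cochain complexes to obtain the long exact sequence in cohomology.

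It remains to identify the connecting homomorphism, which I would do by the usual diagram chase. Given $[\alpha]\in H^n(\g,\pi,\rho,\mu)$ with $\partial\alpha=0$, lift along $p$ to $(\alpha,0)$, apply the differential to get $\huaD(\alpha,0)=(0,\delta\alpha)\in\Img\iota$, and pull back along $\iota$ to obtain $\delta\alpha$; thus $c^n([\alpha])=[\delta(\alpha)]$, and the relation $\dM\delta\alpha=-\delta\partial\alpha=0$ from the previous step guarantees both that $\delta\alpha$ is a cocycle and that its class is independent of the chosen representative of $[\alpha]$. I expect the only genuine care to be degree bookkeeping: tracking the shift by which $C^{n-1}(\g;V)$ occupies degree $n$ of the middle complex, together with the signs inherited from the $L_\infty$-structure and from the normalization $\huaD=(-1)^{n-2}l_1^{(s^{-1}(\pi+\rho+\mu),D)}$, so that the indices in the long exact sequence and the formula $c^n([\alpha])=[\delta(\alpha)]$ line up consistently. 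Once the short exact sequence and the identity $\dM\delta+\delta\partial=0$ are in place, the remaining homological algebra is routine.
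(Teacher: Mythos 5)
Your proposal is correct and takes the same route as the paper, which simply notes that the explicit triangular formula $\huaD(f,\theta)=(\partial f,\dM\theta+\delta f)$ yields the short exact sequence and hence the long exact sequence; you have filled in the routine verifications (chain-map property of $\iota$ and $p$, the identity $\dM\delta+\delta\partial=0$ extracted from $\huaD^2=0$, and the zig-zag identification of $c^n$) that the paper leaves implicit. Your closing remark about degree bookkeeping is well placed, since $C^{n-1}(\g;V)$ occupies degree $n$ of the middle complex and the indexing of the long exact sequence must be read with that shift in mind.
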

\begin{proof}
By the explicit formula of the coboundary operator $\huaD$, we have the short exact sequence of chain complexs which induces a long exact sequence of cohomology groups.
\end{proof}

\subsection{Infinitesimal deformations of pre-LieDer pairs}
\begin{defi}
Let $(\g,\pi,D,\rho,\mu)$ be a pre-LieDer pair. If for all $t\in \mathbb{K}$, $(\g,\pi_t=\pi+t\omega,D_t=D+t\hat{D},\rho_t=\rho+t\sigma,\mu_t=\mu+t\tau)$ is a pre-LieDer pair, where $\omega:\g\otimes \g\longrightarrow \g,\sigma:\g\otimes V\longrightarrow\g, \tau:V\otimes \g\longrightarrow \g$ and $\hat{D}:\g\longrightarrow V$ are linear maps. We say that $(\omega,\sigma,\tau,\hat{D})$ generates an {\bf infinitesimal deformation} of a pre-LieDer pair $(\g,\pi,D,\rho,\mu)$.
\end{defi}
It is direct to check that $(\omega,\sigma,\tau,\hat{D})$ generates an infinitesimal deformation of a pre-LieDer pair $(\g,\pi,D,\rho,\mu)$ if and only if for all $x,y\in \g$, the following equalities are satisfied:
\begin{eqnarray}
\label{deformation-1}[\pi+\rho+\mu,\omega+\sigma+\tau]^{MN}&=&0,\\
\label{deformation-2}[\omega+\sigma+\tau,\omega+\sigma+\tau]^{MN}&=&0,\\
\label{deformation-3}[\pi+\rho+\mu,\hat{D}]^{MN}+[\omega+\sigma+\tau,D]^{MN}&=&0,\\
\label{deformation-4}[\omega+\sigma+\tau,\hat{D}]^{MN}&=&0.
\end{eqnarray}
Obviously, $(\omega,\sigma,\tau) \in C^2(\g,\pi,\rho,\mu)$, \eqref{deformation-1} means that $(\omega,\sigma,\tau)$ is a $2$-cocycle of the pre-Lie algebra with representations, that is $\partial(\omega,\sigma,\tau)=0$. \eqref{deformation-2} means that $(V;\sigma,\tau)$ is a representation of the pre-Lie algebra $(\g,\omega)$. \eqref{deformation-3} means that $\dM \hat{D}+\delta(\omega,\sigma,\tau)=0$. \eqref{deformation-4} means that $(\g,\omega,\hat{D},\sigma,\tau)$ is a pre-LieDer pair. By \eqref{deformation-1} and \eqref{deformation-3}, we have $$\huaD(\omega,\sigma,\tau,\hat{D})=(\partial(\omega,\sigma,\tau),\dM \hat{D}+\delta(\omega,\sigma,\tau))=0.$$
\begin{thm}\label{2-cocycle}
With the above notation, $(\omega,\sigma,\tau,\hat{D})$ is a $2$-cocycle of the pre-LieDer pair $(\g,D,\rho,\mu)$.
\end{thm}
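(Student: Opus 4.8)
The plan is to verify directly that $\huaD(\omega,\sigma,\tau,\hat{D})=0$, exploiting the explicit two-component formula for the coboundary operator $\huaD$ obtained above and matching each component against a defining equation of an infinitesimal deformation.

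First I would confirm that $(\omega,\sigma,\tau,\hat{D})$ is indeed a $2$-cochain of the pre-LieDer pair. By hypothesis $(\omega,\sigma,\tau)\in C^2(\g,\pi,\rho,\mu)$ and $\hat{D}\in\Hom(\g,V)=C^1(\g;V)$, so the pair lies in $C^2(\g,\pi,\rho,\mu,D)=C^2(\g,\pi,\rho,\mu)\oplus C^1(\g;V)$. Consequently the assertion that $(\omega,\sigma,\tau,\hat{D})$ is a $2$-cocycle means exactly $\huaD(\omega,\sigma,\tau,\hat{D})=0$.

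Next I would substitute $f=(\omega,\sigma,\tau)$ and $\theta=\hat{D}$ into the identity $\huaD(f,\theta)=(\partial f,\,\dM\theta+\delta f)$ established just above the statement. This gives
\begin{equation*}
\huaD(\omega,\sigma,\tau,\hat{D})=\bigl(\partial(\omega,\sigma,\tau),\ \dM\hat{D}+\delta(\omega,\sigma,\tau)\bigr),
\end{equation*}
so it remains only to show that both components vanish. Up to sign the first component equals $[\pi+\rho+\mu,\omega+\sigma+\tau]^{MN}$, which is precisely the left-hand side of \eqref{deformation-1}, while the second component equals $[\pi+\rho+\mu,\hat{D}]^{MN}+[\omega+\sigma+\tau,D]^{MN}$, the left-hand side of \eqref{deformation-3}. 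Both are therefore zero, and the theorem follows.

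I do not anticipate a real obstacle: the substance of the statement is the observation, already recorded just before it, that the infinitesimal deformation conditions \eqref{deformation-1} and \eqref{deformation-3} are nothing but the vanishing of the two graded pieces of $\huaD(\omega,\sigma,\tau,\hat{D})$. The only care needed is the bookkeeping of signs and bidegrees when identifying $\partial(\omega,\sigma,\tau)$ with $\pm[\pi+\rho+\mu,\omega+\sigma+\tau]^{MN}$ and $\delta(\omega,\sigma,\tau)$ with $\pm[\omega+\sigma+\tau,D]^{MN}$; these are pinned down by the bidegrees $||[\pi+\rho+\mu,\omega+\sigma+\tau]^{MN}||=2|0$ and $||[\omega+\sigma+\tau,D]^{MN}||=2|-1$ together with the sign conventions in the definition of $\huaD$. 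Once these are fixed the conclusion is immediate.
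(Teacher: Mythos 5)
Your proposal is correct and follows essentially the same route as the paper: the paper's argument is precisely the observation, made in the paragraph preceding the theorem, that $\huaD(\omega,\sigma,\tau,\hat{D})=(\partial(\omega,\sigma,\tau),\dM\hat{D}+\delta(\omega,\sigma,\tau))$ and that these two components vanish by \eqref{deformation-1} and \eqref{deformation-3} respectively. Your sign and bidegree bookkeeping is consistent with the paper's conventions (for $n=2$ the prefactor $(-1)^{n-2}$ is $1$), so nothing is missing.
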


\begin{defi}
Let $(\pi'_t=\pi+t\omega',\rho'_t=\rho+t\sigma',\mu_t'=\mu+t\tau',D_t'=D+t\hat{D}')$ and $(\pi_t=\pi+t\omega,\rho_t=\rho+t\sigma,\mu_t=\mu+t\tau,D_t=D+t\hat{D})$ be two infinitesimal deformations of the pre-LieDer pair $(\g,\pi,D,\rho,\mu)$. We call them {\bf equivalent} if there exists $N\in\gl(\g)$ and $S\in \gl(V)$ such that $({\Id}_{\g}+tN,{\Id}_V+tS)$ is a homomorphism from the pre-LieDer pair $(\g,\pi'_t,D_t',\rho'_t,\mu_t')$ to the pre-LieDer pair $(\g,\pi_t,D_t,\rho_t,\mu_t)$, i.e. for all $x,y\in \g$, the following equalities hold:
\begin{eqnarray*}
({\Id}_{\g}+tN)(x\cdot_t' y)&=&({\Id}_{\g}+tN)(x)\cdot_t({\Id}_{\g}+tN)(y),\\
({\Id}_V+tS)\circ\rho'_t(x)&=&\rho_t(({\Id}_{\g}+tN)(x))\circ ({\Id}_V+tS),\\
({\Id}_V+tS)\circ\mu'_t(x)&=&\mu_t(({\Id}_{\g}+tN)(x))\circ ({\Id}_V+tS),\\
({\Id}_V+tS)\circ D_t'&=&D_t\circ ({\Id}_{\g}+tN).
\end{eqnarray*}
An infinitesimal deformation is said to be {\bf trivial} if it equivalent to $(\pi,\rho,\mu,D)$.
\end{defi}
By direct calculations, $(\pi_t',\rho_t',\mu_t',D_t')$ and $(\pi_t,\rho_t,\mu_t,D_t)$ are equivalent deformations if and only if for all $x,y\in \g, u\in V$, the following equalities hold:
\begin{eqnarray}
\label{equi-deformation-1}\omega'(x,y)-\omega(x,y)&=&N(x)\cdot y+x\cdot N(y)-N(x\cdot y),\\
\label{equi-deformation-2}N(\omega'(x,y))&=&N(x)\cdot N(y)+\omega(x,N(y))+\omega(N(x),y),\\
\label{equi-deformation-3}\omega(N(x),N(y))&=&0,\\
\label{equi-deformation-4}\sigma'(x)u-\sigma(x)u&=&\rho(x)S(u)+\rho(N(x))u-S(\rho(x)u),\\
\label{equi-deformation-5}S(\sigma'(x)u)&=&\rho(N(x))S(u)+\sigma(x)S(u)+\sigma(N(x))u,\\
\label{equi-deformation-6}\sigma(N(x))S(u)&=&0\\
\label{equi-deformation-7}\tau'(x)u-\tau(x)u&=&\mu(x)S(u)+\mu(N(x))u-S(\mu(x)u),\\
\label{equi-deformation-8}S(\tau'(x)u)&=&\mu(N(x))S(u)+\tau(x)S(u)+\tau(N(x))u,\\
\label{equi-deformation-9}\tau(N(x))S(u)&=&0,\\
\label{equi-deformation-10}\hat{D}'(x)-\hat{D}(x)&=&D(N(x))-S(D(x)),\\
\label{equi-deformation-11}S(\hat{D}'(x))&=&\hat{D}(N(x)).
\end{eqnarray}
We summarize the above discussion into the following conclusion:
\begin{thm}
Let $(\g,D,\rho,\mu)$ be a pre-LieDer pair. If two infinitesimal deformations $(\pi'_t=\pi+t\omega',\rho'_t=\rho+t\sigma',\mu_t'=\mu+t\tau',D_t'=D+t\hat{D}')$ and $(\pi_t=\pi+t\omega,\rho_t=\rho+t\sigma,\mu_t=\mu+t\tau,D_t=D+t\hat{D})$ are equivalent, then $(\omega',\sigma',\tau',\hat{D}')$ and $(\omega,\sigma,\tau,\hat{D})$ are in the same cohomology class of $H^2(\g,\pi,\rho,\mu,D)$.
\end{thm}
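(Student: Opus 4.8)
The plan is to exhibit the difference of the two 2-cocycles as the coboundary of a degree-one cochain built from the equivalence data, so that the two cocycles represent the same class in $H^2(\g,\pi,\rho,\mu,D)$. By Theorem \ref{2-cocycle} both $(\omega,\sigma,\tau,\hat{D})$ and $(\omega',\sigma',\tau',\hat{D}')$ are already 2-cocycles; what remains is to produce a 1-cochain whose coboundary connects them.

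First I would take the pair $(N,S)\in \gl(\g)\oplus\gl(V)=C^1(\g,\pi,\rho,\mu,D)$ that witnesses the equivalence. Since the $\theta$-component is trivial in degree one, formula \eqref{cohomology-operator} specializes to $\huaD(N,S)=(\partial(N,S),\delta(N,S))$, and I would expand each of its four components from the explicit formulas for $\partial$ and $\delta$ at $n=1$. Concretely, the $\g$-component is $\dM_{\reg}N$, given by $(x,y)\mapsto x\cdot N(y)+N(x)\cdot y-N(x\cdot y)$; the $\rho$-component sends $(x,u)$ to $\rho(x)S(u)+\rho(N(x))u-S(\rho(x)u)$; the $\mu$-component sends $(u,x)$ to $\mu(x)S(u)+\mu(N(x))u-S(\mu(x)u)$; and $\delta(N,S)$ sends $x$ to $D(N(x))-S(D(x))$.

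Next I would match these four expressions termwise against the linear-in-$t$ equivalence relations \eqref{equi-deformation-1}, \eqref{equi-deformation-4}, \eqref{equi-deformation-7} and \eqref{equi-deformation-10}, which read precisely $\omega'-\omega$, $\sigma'-\sigma$, $\tau'-\tau$ and $\hat{D}'-\hat{D}$ respectively. This identifies $\huaD(N,S)=(\omega'-\omega,\sigma'-\sigma,\tau'-\tau,\hat{D}'-\hat{D})$, so that the two 2-cocycles differ by the coboundary $\huaD(N,S)$ and hence lie in the same class of $H^2(\g,\pi,\rho,\mu,D)$.

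The conceptual heart of the argument is that these four relations are exactly the coefficients of $t$ in the four morphism conditions defining equivalence; the remaining relations \eqref{equi-deformation-2}, \eqref{equi-deformation-3}, \eqref{equi-deformation-5}, \eqref{equi-deformation-6}, \eqref{equi-deformation-8}, \eqref{equi-deformation-9} and \eqref{equi-deformation-11} are the coefficients of $t^2$ and $t^3$ and are irrelevant to the cohomology class. The main obstacle is purely a matter of bookkeeping: one must verify that the degree-one specialization of $\partial$ and $\delta$, together with their Koszul signs and unshuffle/argument conventions, reproduces the signs in \eqref{equi-deformation-1}, \eqref{equi-deformation-4}, \eqref{equi-deformation-7} and \eqref{equi-deformation-10} exactly, with no residual sign discrepancy.
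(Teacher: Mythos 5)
Your proposal is correct and follows essentially the same route as the paper: both identify $(N,S)$ as a $1$-cochain, compute $\huaD(N,S)=(\partial(N,S),\delta(N,S))$ explicitly, and match its components against the linear-in-$t$ equivalence relations \eqref{equi-deformation-1}, \eqref{equi-deformation-4}, \eqref{equi-deformation-7} and \eqref{equi-deformation-10} to conclude $(\omega',\sigma',\tau',\hat{D}')-(\omega,\sigma,\tau,\hat{D})=\huaD(N,S)$. Your added observation that the remaining relations are the $t^2$ and $t^3$ coefficients and play no role here is accurate and consistent with the paper's argument.
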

\begin{proof}
By Theorem \ref{2-cocycle}, we obtain that $(\omega',\sigma',\tau',\hat{D}'), (\omega,\sigma,\tau,\hat{D})\in Z^2(\g,\pi,\rho,\mu,D)$. Obviously, $(N,S)\in C^1(\g,\pi,\rho,\mu,D)$. For all $x,y\in \g, u,v\in V$, by \eqref{equi-deformation-1}, \eqref{equi-deformation-4} and \eqref{equi-deformation-7}, we have
\begin{eqnarray*}
&&((\omega',\sigma',\tau')-(\omega,\sigma,\tau))((x,u),(y,v))\\
&=&(N(x)\cdot y+x\cdot N(y)-N(x\cdot y),\rho(x)S(v)+\rho(N(x))v-S(\rho(x)v),\\
&&\mu(y)S(u)+\mu(N(y))u-S(\mu(y)u))\\
&=&\partial(N,S)((x,u),(y,v)),
\end{eqnarray*}
which implies that $(\omega',\sigma',\tau')-(\omega,\sigma,\tau)=\partial(N,S)$.
For all $x\in \g$, by \eqref{equi-deformation-10}, we have
\begin{equation*}
(\hat{D}'-\hat{D})(x)=D(N(x))-S(D(x))=\delta(N,S)(x),
\end{equation*}
which implies that $\hat{D}'-\hat{D}=\delta(N,S)$.
Thus, we have
\begin{equation*}
(\omega',\sigma',\tau',\hat{D}')-(\omega,\sigma,\tau,\hat{D})=(\partial(N,S),\delta(N,S))=\huaD(N,S).
\end{equation*}
This finishes the proof.
\end{proof}
\subsection{Cohomologies of regular pre-LieDer pairs}
\
\newline
\indent
Let $(\g,D)$ be a regular pre-LieDer pair. Define the set of $0$-cochains $C^0(\g,D)$ to be $0$ and define the set of $1$-cochains $C^1(\g,D)$ to be $\Hom(\g,\g)$. For $n\geq 2$, define the set of $n$-cochains $C^n(\g,D)$ by
\begin{equation*}
 C^n(\g,D):=\Hom(\wedge^{n-1}\g\otimes \g,\g)\oplus \Hom(\wedge^{n-2}\g\otimes\g,\g).
\end{equation*}
For all $f\in \Hom(\wedge^{n-1}\g\otimes \g,\g)$ and $\theta\in \Hom(\wedge^{n-2}\g\otimes \g,\g)$. Define the embedding $i:C^n(\g,D)\longrightarrow C^n(\g,\pi,L,R,D)$ by
\begin{equation*}
i(f,\theta)=(f,f,f,\theta).
\end{equation*}
Denote by $\Img^n(i)=i(C^n(\g,D))$. Then we have
\begin{pro}
With the above notation, $(\oplus_{n=0}^{+\infty}\Img^n(i),\huaD)$ is a subcomplex of the cochain complex $(\oplus_{n=0}^{+\infty}C^n(\g,\pi,L,R,D),\huaD)$.
\end{pro}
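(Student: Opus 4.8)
The goal is to verify $\huaD(\Img^n(i))\subseteq\Img^{n+1}(i)$, which amounts to showing that for every $(f,\theta)\in C^n(\g,D)$ the cochain $\huaD\big(i(f,\theta)\big)=\huaD(f,f,f,\theta)$ again lies in the image of $i$. Using the explicit description of the coboundary established above, namely $\huaD(F,\Theta)=(\partial F,\dM\Theta+\delta F)$, I would first record that $\huaD(f,f,f,\theta)=\big(\partial(f,f,f),\ \dM\theta+\delta(f,f,f)\big)$. The fourth slot $\dM\theta+\delta(f,f,f)$ lands in $\Hom(\wedge^{n-1}\g\otimes\g,\g)$, which is exactly the free fourth summand of $C^{n+1}(\g,D)$, so it imposes no constraint. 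Thus the entire content reduces to the single claim that $\partial$ preserves the diagonal: that $\partial(f,f,f)$ is again of the form $(g,g,g)$.

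For this key step I would show $\partial(f,f,f)=(\dM_{\reg}f,\dM_{\reg}f,\dM_{\reg}f)$. The first component equals $\dM_{\reg}f$ by definition \eqref{cohomology-1}. For the remaining two I would feed $\rho=L$, $\mu=R$ (so that $\rho(x)$ acts by $x\cdot(-)$ and $\mu(x)$ acts by $(-)\cdot x$) into the explicit formulas for $(\partial f)_\rho$ and $(\partial f)_\mu$ with $f_\g=f_\rho=f_\mu=f$, and compare term by term with $\dM_{\reg}f$. In the $\rho$-slot the match is immediate once the representation argument $u\in V=\g$ is read as the last argument of $f$: the four summands of $(\partial f)_\rho$ reproduce, in permuted order, the four summands of $\dM_{\reg}f$. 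The $\mu$-slot is the genuinely laborious case, since there $f_\mu$ lives in $\Hom(\wedge^{n-2}\g\otimes\g\otimes\g,\g)$ and the identification places the $V$-argument among the antisymmetric inputs of $f$; the sign and index bookkeeping there is the main obstacle in a direct verification.

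Rather than grind through the $\mu$-slot by hand, I would prefer the conceptual route. The refined complex $(\oplus_k C^{k|0}(\g\oplus\g,\g\oplus\g),\partial)$ for the regular representation is precisely the bidegree-$k|0$ part of the Matsushima--Nijenhuis deformation complex of the semidirect-product pre-Lie algebra $\g\ltimes_{L,R}\g$, i.e. of the dual-numbers pre-Lie algebra $\g[\epsilon]=\g\otimes\K[\epsilon]/(\epsilon^2)$, whose product is exactly $\pi+L+R$. A $\K[\epsilon]$-multilinear map on $\g[\epsilon]$ is determined by its restriction to $\g$, automatically annihilates any input carrying two or more factors of $\epsilon$ (as $\epsilon^2=0$), and so corresponds exactly to a bidegree-$k|0$ cochain; under this dictionary the diagonal embedding $f\mapsto(f,f,f)$ on the first three slots is nothing but the $\K[\epsilon]$-linear extension $f\mapsto f_\epsilon$, and $\pi+L+R=\pi_\epsilon$. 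Since the bracket $[\cdot,\cdot]^{MN}$ of two $\K[\epsilon]$-linear maps is again $\K[\epsilon]$-linear, extension of scalars is a morphism of graded Lie algebras, whence $[\pi+L+R,f_\epsilon]^{MN}=\big([\pi,f]^{MN}\big)_\epsilon$, which is precisely the diagonal preservation $\partial(f,f,f)=(\dM_{\reg}f,\dM_{\reg}f,\dM_{\reg}f)$. Combining with the first paragraph yields $\huaD(i(f,\theta))=i\big(\dM_{\reg}f,\ \dM\theta+\delta(f,f,f)\big)\in\Img^{n+1}(i)$, as desired. The only point requiring care in this conceptual argument is checking that the $\K[\epsilon]$-linear extension respects the graded antisymmetry in the first $n-1$ arguments, which holds because $\K[\epsilon]$ is concentrated in even degree.
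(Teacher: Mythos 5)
Your overall reduction coincides with the paper's: both arguments note that the fourth component of $\huaD(f,f,f,\theta)$ is unconstrained in $\Img^{n+1}(i)$, so everything hinges on the diagonal identity $\partial(f,f,f)=(\dM_{\reg}f,\dM_{\reg}f,\dM_{\reg}f)$, and both get the $\g$-component for free from \eqref{cohomology-1}. Where you genuinely diverge is in the $\rho$- and $\mu$-components. The paper substitutes $\rho=L$, $\mu=R$ and $f_{\g}=f_{\rho}=f_{\mu}=f$ into the displayed formulas for $(\partial f)_{\rho}$ and $(\partial f)_{\mu}$ and matches the resulting sums against $\dM_{\reg}f$ term by term --- precisely the sign-and-index computation you flag as laborious. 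Your alternative packages the triple $(f,f,f)$ as the $\K[\epsilon]$-multilinear extension $f_\epsilon$ on the dual-numbers pre-Lie algebra $\g[\epsilon]=\g\otimes\K[\epsilon]/(\epsilon^2)$, observes that $\pi+L+R$ is exactly the lifted multiplication of $\g\ltimes_{L,R}\g=\g[\epsilon]$, and invokes naturality of the Matsushima--Nijenhuis bracket under extension of scalars along $\K\to\K[\epsilon]$. This is correct: the horizontal lift of $(f,f,f)$ agrees with $f_\epsilon$ (the antisymmetry of $f$ in its first $n-1$ arguments is exactly what identifies the $f_\mu$-slot of the lift with the sum of single-$\epsilon$ insertions, unshuffle signs included), the bracket of $\K[\epsilon]$-linear maps is again $\K[\epsilon]$-linear and restricts to the bracket over $\K$, and no Koszul signs intervene since $\K[\epsilon]$ is concentrated in degree zero. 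What your route buys is that all the bookkeeping of the $\mu$-slot collapses into one structural statement, at the cost of setting up the dictionary between bidegree-$k|0$ diagonal cochains and $\K[\epsilon]$-linear extensions with some care; the paper's route is longer but stays entirely within formulas already displayed. Both arrive at $\huaD(i(f,\theta))=i(\dM_{\reg}f,\dM_{\reg}\theta+\delta(f,f,f))\in\Img^{n+1}(i)$.
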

\begin{proof}
For all $f\in \Hom(\wedge^{n-1}\g\otimes \g,\g)$, $\theta\in \Hom(\wedge^{n-2}\g\otimes \g,\g)$, $(f,f,f,\theta)\in \Img^n(i)$, we have
\begin{eqnarray*}
\huaD(f,f,f,\theta)&=&(\partial(f,f,f),\dM \theta+\delta(f,f,f))\\
&=&(\partial(f,f,f)_{\g},\partial(f,f,f)_{\rho},\partial(f,f,f)_{\mu},\dM_{\reg} \theta+\delta(f,f,f)).
\end{eqnarray*}
By \eqref{cohomology-1}, we have $\partial(f,f,f)_{\g}=\dM_{\reg} f$. For all $x_1,\dots,x_{n+1}\in \g$, we have
\begin{eqnarray*}
\partial (f,f,f)_{\rho}(x_1,\dots,x_{n+1})&=&\sum_{i=1}^n(-1)^{i+1}f(x_1,\dots,\hat{x_i},\dots,x_n,x_i)\cdot x_{n+1}\\
&&+\sum_{i=1}^n(-1)^{i+1}x_i\cdot f(x_1,\dots,\hat{x_i},\dots,x_{n+1})\\
&&-\sum_{i=1}^n(-1)^{i+1}f(x_1,\dots,\hat{x_i}\dots,x_n,x_i\cdot x_{n+1})\\
&&+\sum_{1\leq i<j\leq n}(-1)^{i+j} f([x_i,x_j]_C,x_1,\dots,\hat{x_i},\dots,\hat{x_j},\dots,x_{n+1})\\
&=&\dM_{\reg} f(x_1,\dots,x_{n+1}).
\end{eqnarray*}
and
\begin{eqnarray*}
&&\partial (f,f,f)_{\mu}(x_1,\dots,x_{n+1})\\
&=&(-1)^{n-1}(x_n\cdot f(x_1,\dots,x_{n-1},x_{n+1})+ f(x_1,\dots,x_n)\cdot x_{n+1}-f(x_1,\dots,x_{n-1},x_n\cdot x_{n+1})\\
&&-\sum_{i=1}^{n-1}(-1)^{i+1}f(x_1,\dots,\hat{x_i},\dots,x_n,x_i\cdot x_{n+1})+\sum_{i=1}^{n-1}(-1)^{i+1}x_i\cdot f(x_1,\dots,\hat{x_i},\dots,x_{n+1})\\
&&-\sum_{i=1}^{n-1}(-1)^{i+1}f(x_1,\dots,\hat{x_i}\dots,x_{n-1},x_i\cdot x_n,x_{n+1})+\sum_{i=1}^{n-1}(-1)^{i+1}f(x_1,\dots,\hat{x_i},\dots,x_n,x_i)\cdot x_{n+1}\\
&&+\sum_{i=1}^{n-1}(-1)^{i+1}f(x_1,\dots,\hat{x_i},\dots,x_{n-1},x_n\cdot x_i,x_{n+1})\\
&&+\sum_{1\leq i<j\leq n-1}(-1)^{i+j} f([x_i,x_j]_C,x_1,\dots,\hat{x_i},\dots,\hat{x_j},\dots,x_{n+1})\\
&=&\dM_{\reg} f(x_1,\dots,x_{n+1}).
\end{eqnarray*}
Thus, we obtain that $$\huaD(f,f,f,\theta)=(\dM_{\reg} f,\dM_{\reg} f,\dM_{\reg} f,\dM_{\reg} \theta+\delta(f,f,f))=i(\dM_{\reg} f,\dM_{\reg} \theta+\delta(f,f,f)),$$
which implies that $(\oplus_{n=0}^{+\infty}\Img^n(i),\huaD)$ is a subcomplex.
\end{proof}
Define the projection $p:\Img^n(i)\longrightarrow C^n(\g,D)$ by
\begin{equation*}
p(f,f,f,\theta)=(f,\theta), \quad \forall f\in \Hom(\wedge^{n-1}\g\otimes \g,\g), \theta\in \Hom(\wedge^{n-2}\g\otimes \g,\g).
\end{equation*}
Then for $n\geq 1$, we define $\bar{\huaD}:C^n(\g,D)\longrightarrow C^{n+1}(\g,D)$ by $\bar{\huaD}=p\circ \huaD \circ i$.
More precisely, we have
\begin{equation*}
\bar{\huaD}(f,\theta)=(\dM_{\reg} f,\dM_{\reg} \theta+\Omega f), \quad \forall f\in \Hom(\wedge^{n-1}\g\otimes \g,\g), \theta\in \Hom(\wedge^{n-2}\g\otimes \g,\g),
\end{equation*}
where $\Omega:\Hom(\wedge^{n-1}\g\otimes \g,\g)\longrightarrow \Hom(\wedge^{n-1}\g\otimes \g,\g)$ is given by
\begin{equation*}
\Omega(f)(x_1,\dots,x_n)=(-1)^{n-2}(\Sigma_{i=1}^nf(x_1,\dots,x_{i-1},D(x_i),x_{i+1},\dots,x_n)-D(f(x_1,\dots,x_n))).
\end{equation*}
\begin{thm}\label{cochain complex}
With the above notation, $(\oplus_{n=0}^{+\infty}C^n(\g,D),\bar{\huaD})$ is a cochain complex, i.e. $\bar{D}\circ \bar{D}=0$.
\end{thm}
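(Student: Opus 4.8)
The plan is to deduce $\bar{\huaD}^2=0$ from the already-established identity $\huaD^2=0$ by transporting it along the embedding $i$, rather than by computing $\bar{\huaD}^2$ directly from the explicit formula. The point is that $i$ identifies $C^*(\g,D)$ with the subcomplex $\oplus_{n}\Img^n(i)$ of $(C^*(\g,\pi,L,R,D),\huaD)$ established in the preceding Proposition, and $\bar{\huaD}=p\circ\huaD\circ i$ is nothing but $\huaD$ read through this identification; squaring it should therefore inherit the vanishing of $\huaD^2$.

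First I would record the two elementary facts that $p\circ i=\Id$ on $C^n(\g,D)$ and $i\circ p=\Id$ on $\Img^n(i)$; both are immediate from $p(f,f,f,\theta)=(f,\theta)$ and $i(f,\theta)=(f,f,f,\theta)$. The key input is the preceding Proposition, which shows $\huaD(f,f,f,\theta)=i(\dM_{\reg}f,\dM_{\reg}\theta+\delta(f,f,f))$; in particular $\huaD\circ i$ takes values in $\Img(i)$. Consequently $(i\circ p)\circ(\huaD\circ i)=\huaD\circ i$, since $i\circ p$ fixes $\Img(i)$ pointwise. Putting these together,
\begin{align*}
\bar{\huaD}\circ\bar{\huaD}
&=(p\circ\huaD\circ i)\circ(p\circ\huaD\circ i)
=p\circ\huaD\circ\big((i\circ p)\circ\huaD\circ i\big)\\
&=p\circ\huaD\circ\huaD\circ i
=p\circ 0\circ i=0,
\end{align*}
where the last line uses $\huaD^2=0$. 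This gives the claim.

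As a sanity check one may also verify $\bar{\huaD}^2=0$ directly from $\bar{\huaD}(f,\theta)=(\dM_{\reg}f,\dM_{\reg}\theta+\Omega f)$: the first slot gives $\dM_{\reg}^2 f=0$, while the second gives $\dM_{\reg}^2\theta+\dM_{\reg}\Omega f+\Omega\dM_{\reg}f$, so one is reduced to the anticommutation relation $\dM_{\reg}\circ\Omega+\Omega\circ\dM_{\reg}=0$. This is the only nontrivial point, and it is precisely where the hypothesis that $D$ is a derivation of the regular representation enters; establishing it by hand means unwinding the sums defining $\dM_{\reg}$ and $\Omega$ and matching the terms produced when $D$ is moved past the pre-Lie products. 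The transport argument above avoids this bookkeeping, since the derivation condition has already been absorbed into $\huaD^2=0$ through the $L_\infty$-machinery of Section \ref{sec:deformation}; the only thing one must check carefully is that $\huaD\circ i$ genuinely lands in $\Img(i)$, which is exactly the content of the preceding Proposition, so I expect no real obstacle along this route.
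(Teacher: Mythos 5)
Your proposal is correct and is essentially the paper's own proof: the paper likewise writes $\bar{\huaD}\circ\bar{\huaD}=p\circ\huaD\circ i\circ p\circ\huaD\circ i=p\circ\huaD\circ\huaD\circ i=0$, invoking $i\circ p=\Id$. Your version is in fact slightly more careful, since you note explicitly that $i\circ p$ is the identity only on $\Img(i)$ and that $\huaD\circ i$ lands there by the preceding Proposition, which is the precise justification the paper's one-line computation implicitly relies on.
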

\begin{proof}
By $\bar{\huaD}=p\circ \huaD \circ i$ and $i\circ p=\Id$, we have
\begin{equation*}
\bar{D}\circ \bar{D}=p\circ D\circ i\circ p\circ D\circ i=p\circ D\circ D\circ i=0.
\end{equation*}
This finishes the proof.
\end{proof}
\begin{defi}
The cohomology of the cochain complex $(\oplus_{n=1}^{+\infty}C^n(\g,D),\bar{\huaD})$ is taken to be the cohomology of the regular pre-LieDer pair $(\g,D)$. The corresponding cohomology group is denoted by $H^n(\g,D)$.
\end{defi}
\section{Abelian extensions of regular pre-LieDer pairs}\label{sec:abelian-extension}
In this section, first, we introduce the notion of representations of regular pre-LieDer pairs. Then, we give cohomologies of regular pre-LieDer pairs with coefficients in an arbitrary representation. Finally, we study abelian extensions of  regular pre-LieDer pairs using this cohomological approach. We show that abelian extensions are classified by the second  cohomology group.
\begin{defi}
A {\bf representation} of a regular pre-LieDer pair $(\g,D)$ on a vector space $V$ with respect to  linear map $K:V\longrightarrow V$ consists of a pair $(\tilde{\rho},\tilde{\mu})$, where $(V;\tilde{\rho},\tilde{\mu})$ is a representation of the pre-Lie algebra $(\g,\cdot)$ such that for all $x\in \g, u\in V$, the following equalities are satisfied:
\begin{eqnarray}
\label{extension-rep-1}K(\tilde{\rho}(x)u)&=&\tilde{\rho}(x)K(u)+\tilde{\rho}(D(x))u,\\
\label{extension-rep-2}K(\tilde{\mu}(x)u)&=&\tilde{\mu}(x)K(u)+\tilde{\mu}(D(x))u.
\end{eqnarray}
\end{defi}
We denote a representation by $(V,K,\tilde{\rho},\tilde{\mu})$. Let $\tilde{L},\tilde{R}:\g\longrightarrow \gl(\g)$ be linear maps, where $\tilde{L}_xy=x\cdot y, \tilde{R}_xy=y\cdot x$. Then $(\g,D,\tilde{L},\tilde{R})$ is called a regular representation.

We define a bilinear operation $\cdot_\ltimes:\otimes^2(\g\oplus V)\lon(\g\oplus V)$ by
\begin{equation*}
(x+u)\cdot_\ltimes (y+v):=x\cdot y+\tilde{\rho}(x)(v)+\tilde{\mu}(y)(u),\quad \forall x,y \in \g,  u,v\in V.
\end{equation*}
and a linear map $D+K:\g\oplus V\longrightarrow \g\oplus V$ by
\begin{equation*}
(D+K)(x+u):=D(x)+K(u),\quad \forall x\in \g,  u\in V.
\end{equation*}

\begin{pro}\label{semi-direct}
 With the above notation, $(\g\oplus V,\cdot_\ltimes,D+K)$ is a regular pre-LieDer pair, which is denoted by $(\g\ltimes_{(\tilde{\rho},\tilde{\mu})}V,D+K)$ and called  the {\bf semi-direct product} of the regular pre-LieDer pair $(\g,D)$ and the representation $(V,K,\tilde{\rho},\tilde{\mu})$.
\end{pro}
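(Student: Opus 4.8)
The plan is to verify in turn the two defining conditions of a regular pre-LieDer pair: first that $(\g\oplus V,\cdot_\ltimes)$ is a pre-Lie algebra, and then that the linear map $D+K$ is a derivation of $\cdot_\ltimes$ with respect to its own regular representation, that is, that $(D+K)\big((x+u)\cdot_\ltimes(y+v)\big)=\big((D+K)(x+u)\big)\cdot_\ltimes(y+v)+(x+u)\cdot_\ltimes\big((D+K)(y+v)\big)$ for all $x,y\in\g$ and $u,v\in V$.

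For the pre-Lie part I would observe that $\cdot_\ltimes$ is exactly the multiplication on $\g\oplus V$ encoded by $\pi+\tilde{\rho}+\tilde{\mu}\in C^{1|0}(\g\oplus V,\g\oplus V)$ under the horizontal lift, since the $\g$-component of $(x+u)\cdot_\ltimes(y+v)$ is $x\cdot y=\pi(x,y)$ and its $V$-component is $\tilde{\rho}(x)v+\tilde{\mu}(y)u$. Because $(V,K,\tilde{\rho},\tilde{\mu})$ is in particular a representation $(V;\tilde{\rho},\tilde{\mu})$ of the pre-Lie algebra $(\g,\cdot)$, Proposition \ref{MC-prelierep} gives $[\pi+\tilde{\rho}+\tilde{\mu},\pi+\tilde{\rho}+\tilde{\mu}]^{MN}=0$, which is precisely the statement that $\cdot_\ltimes$ satisfies the pre-Lie identity. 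Thus no independent computation of the pre-Lie identity is required.

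For the derivation part I would expand both sides and compare the $\g$- and $V$-components separately. Using the definitions of $\cdot_\ltimes$ and of $D+K$, the left-hand side equals $D(x\cdot y)+K(\tilde{\rho}(x)v)+K(\tilde{\mu}(y)u)$, while the right-hand side equals $D(x)\cdot y+x\cdot D(y)+\tilde{\rho}(D(x))v+\tilde{\rho}(x)K(v)+\tilde{\mu}(y)K(u)+\tilde{\mu}(D(y))u$. On the $\g$-component this reduces to the identity $D(x\cdot y)=D(x)\cdot y+x\cdot D(y)$, which holds because $D$ is a derivation of the regular pre-LieDer pair $(\g,D)$, i.e.\ the derivation condition specialized to $\rho=L$, $\mu=R$. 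On the $V$-component the terms linear in $v$ match by \eqref{extension-rep-1} applied to $(x,v)$, and the terms linear in $u$ match by \eqref{extension-rep-2} applied to $(y,u)$.

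The argument is therefore a direct verification with no genuinely hard step; the only point demanding care is the bookkeeping in the $V$-component, namely keeping the terms carrying $\tilde{\rho}$ (which acts on the second slot $v$) cleanly separated from those carrying $\tilde{\mu}$ (which acts on the first slot $u$), so that each half is matched against exactly one of the two compatibility relations \eqref{extension-rep-1} and \eqref{extension-rep-2}. Once these matchings are confirmed, both defining conditions of a regular pre-LieDer pair hold, and $(\g\oplus V,\cdot_\ltimes,D+K)$ is the asserted semi-direct product.
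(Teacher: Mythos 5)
Your proposal is correct and follows essentially the same route as the paper: the pre-Lie structure on $\g\oplus V$ comes from $(V;\tilde{\rho},\tilde{\mu})$ being a representation (the paper simply asserts this is obvious, while you justify it via the lift $\pi+\tilde{\rho}+\tilde{\mu}$ and Proposition \ref{MC-prelierep} — a harmless formalization of the same fact), and the derivation property of $D+K$ is checked componentwise exactly as in the paper, using the derivation identity for $D$ together with \eqref{extension-rep-1} and \eqref{extension-rep-2}.
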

\begin{proof}
Since $(V;\tilde{\rho},\tilde{\mu})$ is a representation of the pre-Lie algebra $(\g,\cdot)$, it is obviously that $(\g\oplus V,\cdot_\ltimes)$ is a pre-Lie algebra. For all $x,y\in\g, u,v\in V$, by \eqref{extension-rep-1} and \eqref{extension-rep-2}, we have
\begin{eqnarray*}
(D+K)((x+u)\cdot_\ltimes(y+v))&=&D(x\cdot y)+K(\tilde{\rho}(x)v)+K(\tilde{\mu}(y)u)\\
&=&D(x)\cdot y+x\cdot D(y)+\tilde{\rho}(x)K(v)+\tilde{\rho}(D(x))v+\tilde{\mu}(y)K(u)+\tilde{\mu}(D(y))u\\
&=&(x+u)\cdot_\ltimes(D+K)(y+v)+(D+K)(x+u)\cdot_\ltimes (y+v),
\end{eqnarray*}
which implies that $D+K$ is a derivation of the pre-Lie algebra $(\g\oplus V,\cdot_\ltimes)$. Thus, $(\g\oplus V,\cdot_\ltimes,D+K)$ is a regular pre-LieDer pair.
\end{proof}
Let $(V,K,\tilde{\rho},\tilde{\mu})$ be a representation of $(\g,D)$. Define the set of $0$-cochains $C^0(\g,D;V,K,\tilde{\rho},\tilde{\mu})$ to be $0$ and define the set of $1$-cochains $C^1(\g,D;V,K,\tilde{\rho},\tilde{\mu})$ to be $\Hom(\g,V)$. For $n\geq 2$, define the set of $n$-cochains $C^n(\g,D;V,K,\tilde{\rho},\tilde{\mu})$ by
\begin{equation*}
C^n(\g,D;V,K,\tilde{\rho},\tilde{\mu}):=\Hom(\wedge^{n-1}\g\otimes \g,V)\oplus \Hom(\wedge^{n-2}\g\otimes \g,V).
\end{equation*}
For all $f\in \Hom(\wedge^{n-1}\g\otimes \g,V)$ and $\theta\in \Hom(\wedge^{n-2}\g\otimes \g,V)$, define the coboundary operator $\huaD_{(\tilde{\rho},\tilde{\mu})}:C^n(\g,D;V,K,\tilde{\rho},\tilde{\mu})\longrightarrow C^{n+1}(\g,D;V,K,\tilde{\rho},\tilde{\mu})$ by
\begin{equation*}
\huaD_{(\tilde{\rho},\tilde{\mu})}(f,\theta)=(\dM_{(\tilde{\rho},\tilde{\mu})}f,\dM_{(\tilde{\rho},\tilde{\mu})}\theta+\Omega_{(\tilde{\rho},\tilde{\mu})}f),
\end{equation*}
where $\dM_{(\tilde{\rho},\tilde{\mu})}$ is the coboundary operator of the pre-Lie algebra $(\g,\cdot)$ with coefficients in the representation $(V,\tilde{\rho},\tilde{\mu})$ and $\Omega_{(\tilde{\rho},\tilde{\mu})}:\Hom(\wedge^{n-1}\g\otimes \g,V)\longrightarrow \Hom(\wedge^{n-1}\g\otimes \g,V)$ is defined by
\begin{equation*}
\Omega(f)(x_1,\dots,x_n)=(-1)^{n-2}(\Sigma_{i=1}^nf(x_1,\dots,x_{i-1},D(x_i),x_{i+1},\dots,x_n)-K(f(x_1,\dots,x_n))).
\end{equation*}
\begin{thm}
With the above notation, $(\oplus_{n=0}^{+\infty}C^n(\g,D;V,K,\tilde{\rho},\tilde{\mu}),\huaD_{(\tilde{\rho},\tilde{\mu})})$ is a cochain complex, i.e. $\huaD_{(\tilde{\rho},\tilde{\mu})}\circ \huaD_{(\tilde{\rho},\tilde{\mu})}=0$.
\end{thm}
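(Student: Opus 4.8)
The plan is to reduce the identity $\huaD_{(\tilde{\rho},\tilde{\mu})}\circ\huaD_{(\tilde{\rho},\tilde{\mu})}=0$ to facts already established rather than expand everything by hand. First I would unwind the square. Writing $\dM$ for $\dM_{(\tilde{\rho},\tilde{\mu})}$ and $\Omega$ for $\Omega_{(\tilde{\rho},\tilde{\mu})}$, for a cochain $(f,\theta)$ one computes
$$\huaD_{(\tilde{\rho},\tilde{\mu})}^2(f,\theta)=\big(\dM^2 f,\ \dM^2\theta+\dM\Omega f+\Omega\dM f\big).$$
Since $\dM$ is the coboundary operator of the pre-Lie algebra $(\g,\cdot)$ with coefficients in $(V;\tilde{\rho},\tilde{\mu})$, it satisfies $\dM^2=0$, so the first slot and the $\dM^2\theta$ summand both vanish. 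Hence the whole statement collapses to the single Cartan-type identity $\dM\circ\Omega+\Omega\circ\dM=0$.

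For that identity the cleanest route is to recognize the present complex as a subcomplex of one we already control. By Proposition \ref{semi-direct} the semi-direct product $(\g\ltimes_{(\tilde{\rho},\tilde{\mu})}V,\,D+K)$ is a regular pre-LieDer pair, so by Theorem \ref{cochain complex} its cochain complex $(\oplus_n C^n(\g\ltimes_{(\tilde{\rho},\tilde{\mu})}V,D+K),\bar{\huaD})$ already satisfies $\bar{\huaD}\circ\bar{\huaD}=0$. I would then single out the subspace of those cochains $(F,\Theta)$ that vanish whenever any argument lies in $V$ and whose values lie in $V$; under the decomposition $\g\oplus V$ this subspace is exactly $C^n(\g,D;V,K,\tilde{\rho},\tilde{\mu})=\Hom(\wedge^{n-1}\g\otimes\g,V)\oplus\Hom(\wedge^{n-2}\g\otimes\g,V)$. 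The two key checks are $(i)$ that this subspace is $\bar{\huaD}$-closed, and $(ii)$ that $\bar{\huaD}$ restricted to it coincides with $\huaD_{(\tilde{\rho},\tilde{\mu})}$. For $(ii)$, on an all-$\g$ argument $x_i$ one has $(D+K)(x_i)=D(x_i)\in\g$, while on a value $f(\cdots)\in V$ one has $(D+K)(f(\cdots))=K(f(\cdots))$, so the $\Omega$ operator built from $D+K$ on the semi-direct product restricts precisely to $\Omega_{(\tilde{\rho},\tilde{\mu})}$; similarly $\tilde{L},\tilde{R}$ (i.e. $\cdot_\ltimes$) restrict to $\tilde{\rho},\tilde{\mu}$ on $V$-valued data, so $\bar{\huaD}$ restricts to $\huaD_{(\tilde{\rho},\tilde{\mu})}$. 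Once $(i)$ and $(ii)$ hold, $\huaD_{(\tilde{\rho},\tilde{\mu})}^2=0$ is inherited verbatim from $\bar{\huaD}^2=0$.

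The step I expect to be the main obstacle is verifying $(i)$: one must check that applying $\bar{\huaD}$ to a cochain supported on $\wedge^\bullet\g\otimes\g$ with values in $V$ produces no components with $V$-inputs or $\g$-values. This is exactly where the compatibility conditions \eqref{extension-rep-1} and \eqref{extension-rep-2} enter: they guarantee that $D+K$ is a derivation of $(\g\oplus V,\cdot_\ltimes)$ (as in Proposition \ref{semi-direct}) and, together with the vanishing $v\cdot_\ltimes w=0$ for $v,w\in V$, force every stray term in $\bar{\huaD}(F,\Theta)$ either to cancel or to land in the correct summand. If one prefers to avoid the embedding bookkeeping, the alternative is a direct proof of $\dM\Omega+\Omega\dM=0$: expand both composites on $(x_1,\dots,x_{n+1})$ using the explicit formula for $\dM_{(\tilde{\rho},\tilde{\mu})}$ and the definition of $\Omega_{(\tilde{\rho},\tilde{\mu})}$, then match terms via the derivation rule $D(x\cdot y)=D(x)\cdot y+x\cdot D(y)$ together with \eqref{extension-rep-1} and \eqref{extension-rep-2}, the opposite signs $(-1)^{n-2}$ and $(-1)^{n-1}$ producing the cancellations. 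I would present the semi-direct product argument as the main proof, since it reuses Theorem \ref{cochain complex} and isolates all representation-theoretic input into Proposition \ref{semi-direct}.
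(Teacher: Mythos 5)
Your proposal is correct and follows essentially the same route as the paper: the paper likewise forms the semi-direct product regular pre-LieDer pair $(\g\ltimes_{(\tilde{\rho},\tilde{\mu})}V,D+K)$ via Proposition \ref{semi-direct}, invokes Theorem \ref{cochain complex} for $\bar{\huaD}^2=0$, and identifies $(\oplus_n C^n(\g,D;V,K,\tilde{\rho},\tilde{\mu}),\huaD_{(\tilde{\rho},\tilde{\mu})})$ as a subcomplex. Your checks $(i)$ and $(ii)$ simply spell out what the paper dismisses as ``straightforward to deduce,'' and your preliminary reduction to $\dM\Omega+\Omega\dM=0$ is a harmless extra observation.
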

\begin{proof}
By Proposition \ref{semi-direct}, we obtain that $(\g\ltimes_{(\rho,\mu)}V,D+K)$ is a regular pre-LieDer pair. By Theorem \ref{cochain complex}, $(\oplus_{n=0}^{+\infty}C^n(\g\oplus V,D+K),\bar{\huaD})$ is a cochain complex. It is straightforward to deduce that $(\oplus_{n=0}^{+\infty}C^n(\g,D;V,K,\tilde{\rho},\tilde{\mu}),\huaD_{(\tilde{\rho},\tilde{\mu})})$ is a subcomplex of $(\oplus_{n=0}^{+\infty}C^n(\g\oplus V,D+K),\bar{\huaD})$. Thus, we obtain that $\huaD_{(\tilde{\rho},\tilde{\mu})}\circ \huaD_{(\tilde{\rho},\tilde{\mu})}=0$.
\end{proof}
\begin{defi}
The cohomology of the cochain complex $(\oplus_{n=1}^{+\infty}C^n(\g,D;V,K,\tilde{\rho},\tilde{\mu}),\huaD_{(\tilde{\rho},\tilde{\mu})})$ is called the cohomology of the regular pre-LieDer pair $(\g,D)$ with coefficients in the representation $(V,K,\tilde{\rho},\tilde{\mu})$. The corresponding cohomology group is denoted by $H^n(\g,D;V,K,\tilde{\rho},\tilde{\mu})$.
\end{defi}

\begin{defi}
Let $(\g,\cdot,D)$ and $(V,\cdot_V,K)$ be two regular pre-LieDer pairs. An {\bf  extension} of $(\g,D)$ by $(V,K)$ is a short exact sequence of pre-LieDer pair morphisms:
$$\xymatrix{
  0 \ar[r] &V \ar[d]_{K}\ar[r]^{\iota}& \hat{\g}\ar[d]_{\hat{D}}\ar[r]^{p}&\g\ar[d]_{D}\ar[r]&0\\
     0\ar[r] &V \ar[r]^{ \iota} &\hat{\g}\ar[r]^{p} &\g\ar[r]&0,              }$$
where $(\hat{\g},\cdot_{\hat{\g}},\hat{D})$ is a regular pre-LieDer pair.

It is called an {\bf abelian extension} if  $(V,\cdot_V)$ is an abelian pre-Lie algebra, i.e.  for all $u,v\in V, u\cdot_V v=0$.
\end{defi}
\begin{defi}
A {\bf section} of an extension $(\hat{\g},\hat{D})$ of a regular pre-LieDer pair $(\g,D)$ by $(V,K)$ is a linear map $s:\g\longrightarrow \hat{\g}$ such that $p\circ s=\Id_{\g}$.
\end{defi}
Let $(\hat{\g},\hat{D})$ be an abelian extension of a regular pre-LieDer pair $(\g,D)$ by $(V,K)$ and $s:\g\longrightarrow \hat{\g}$ a section. For all $x,y\in \g$, define linear maps $\theta:\g\otimes \g\longrightarrow V$ and $\xi:\g\longrightarrow V$ respectively by
\begin{eqnarray*}
\theta(x,y)&=&s(x)\cdot_{\hat{\g}}s(y)-s(x\cdot y),\\
\xi(x)&=&\hat{D}(s(x))-s(D(x)).
\end{eqnarray*}
And for all $x,y\in \g, u\in V$, define $\tilde{\rho},\tilde{\mu}:\g\longrightarrow\gl(V)$ respectively by
\begin{eqnarray*}
\tilde{\rho}(x)(u)&=&s(x)\cdot_{\hat{\g}} u,\\
\tilde{\mu}(x)(u)&=&u\cdot_{\hat{\g}} s(x).
\end{eqnarray*}
Obviously, $\hat{\g}$ is isomorphic to $\g\oplus V$ as vector spaces. Transfer the regular pre-LieDer pair structure on $\hat{\g}$ to that on $\g\oplus V$, we obtain a regular pre-LieDer pair $(\g\oplus V,\diamond,\phi)$, where $\diamond$ and $\phi$ are given by
\begin{eqnarray*}
(x+u)\diamond (y+v)&=&x\cdot y+\theta(x,y)+\tilde{\rho}(x)(v)+\tilde{\mu}(y)(u),\quad \forall ~x,y\in \g, u,v\in V,\\
\phi(x+u)&=&D(x)+\xi(x)+K(u), \quad \forall ~x\in \g, u\in V.
\end{eqnarray*}

\begin{thm}\label{thm:representation}
With the above notation, $(V,K,\tilde{\rho},\tilde{\mu})$ is a representation of the regular pre-LieDer pair $(\g,D)$. Moreover, this representation is independent of the choice of sections.
\end{thm}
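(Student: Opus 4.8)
The plan is to verify directly the three requirements hidden in the definition of a representation of a regular pre-LieDer pair: that $(V;\tilde{\rho},\tilde{\mu})$ is a representation of the pre-Lie algebra $(\g,\cdot)$, that the compatibility conditions \eqref{extension-rep-1} and \eqref{extension-rep-2} between $K$ and $(\tilde{\rho},\tilde{\mu})$ hold, and finally that none of this depends on the section $s$. The first step I would take is to record that $V=\iota(V)=\Ker(p)$ is an abelian ideal of $\hat{\g}$: since $p$ is a pre-Lie morphism, $p(s(x)\cdot_{\hat{\g}}u)=x\cdot p(u)=0$ and $p(u\cdot_{\hat{\g}}s(x))=0$ for $u\in V$, so both $\tilde{\rho}(x)u$ and $\tilde{\mu}(x)u$ genuinely land in $V$; and abelianness gives $w\cdot_{\hat{\g}}u=0$ whenever $w,u\in V$. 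This last fact is the engine of the whole proof, because every correction term $\theta$ and $\xi$ takes values in $V$ and is therefore annihilated whenever it is multiplied against another element of $V$.

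Next I would check the pre-Lie representation axioms. For $\tilde{\rho}$ I would expand $s([x,y]_C)=s(x)\cdot_{\hat{\g}}s(y)-s(y)\cdot_{\hat{\g}}s(x)-\theta(x,y)+\theta(y,x)$; multiplying on the right by $u\in V$ kills the two $\theta$-terms, and the pre-Lie identity in $\hat{\g}$ (in the form $[a,b]_C\cdot c=a\cdot(b\cdot c)-b\cdot(a\cdot c)$) turns the remainder into $\tilde{\rho}(x)\tilde{\rho}(y)u-\tilde{\rho}(y)\tilde{\rho}(x)u$, giving the representation property of the sub-adjacent Lie algebra. For $\tilde{\mu}$ I would compute the four terms $\tilde{\mu}(y)\tilde{\mu}(x)u$, $\tilde{\mu}(x\cdot y)u$, $\tilde{\mu}(y)\tilde{\rho}(x)u$ and $\tilde{\rho}(x)\tilde{\mu}(y)u$ directly as iterated products in $\hat{\g}$; the $\theta(x,y)$ appearing in $\tilde{\mu}(x\cdot y)u=u\cdot_{\hat{\g}}(s(x)\cdot_{\hat{\g}}s(y)-\theta(x,y))$ is again absorbed, and the required identity is precisely the pre-Lie relation applied to the triple $(u,s(x),s(y))$.

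For the $K$-compatibility I would use that the commutative diagram forces $\hat{D}\circ\iota=\iota\circ K$, that is $\hat{D}|_V=K$, together with $\hat{D}(s(x))=s(D(x))+\xi(x)$. Applying the derivation property of $\hat{D}$ to $\tilde{\rho}(x)u=s(x)\cdot_{\hat{\g}}u\in V$ gives $K(\tilde{\rho}(x)u)=\hat{D}(s(x))\cdot_{\hat{\g}}u+s(x)\cdot_{\hat{\g}}K(u)$, and the term $\xi(x)\cdot_{\hat{\g}}u$ vanishes by abelianness, yielding \eqref{extension-rep-1}; the same computation applied to $u\cdot_{\hat{\g}}s(x)$ yields \eqref{extension-rep-2}. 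Finally, independence of the section is immediate once everything above is in place: a second section $s'$ satisfies $s'(x)-s(x)\in\Ker(p)=V$, so $\tilde{\rho}'(x)u-\tilde{\rho}(x)u=(s'(x)-s(x))\cdot_{\hat{\g}}u=0$ and likewise for $\tilde{\mu}$, while $K$ is intrinsic to $\hat{D}|_V$. I do not expect a serious obstacle here; the only delicate point is the consistent bookkeeping of the vanishing of the $\theta$- and $\xi$-terms, which is exactly where the abelianness of the extension enters.
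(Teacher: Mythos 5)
Your proposal is correct and follows essentially the same route as the paper: the paper phrases the computation via the transferred structure $(\g\oplus V,\diamond,\phi)$ and applies the pre-Lie identity to the triples $(x,y,u)$ and $(u,x,y)$ and the derivation property of $\phi$ to $x\diamond u$, which is exactly your direct computation in $\hat{\g}$ with the $\theta$- and $\xi$-terms killed by abelianness. Your explicit justification that $\tilde{\rho}(x)u,\tilde{\mu}(x)u\in V$ and that $\hat{D}|_V=K$ is a small point the paper leaves implicit, but the argument is the same.
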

\begin{proof}
For all $x,y\in \g$, $u\in V$, by the definition of a pre-Lie algebra, we have
\begin{eqnarray*} 0&=&(x\diamond y)\diamond u-x\diamond (y\diamond u)-(y\diamond x)\diamond u+y\diamond (x\diamond u)\\
&=&(x\cdot y+\theta(x,y))\diamond u-x\diamond\tilde{\rho}(y)(u)-(y\cdot x+\theta(y,x))\diamond u+y\diamond\tilde{\rho}(x)(u)\\
&=&\tilde{\rho}(x\cdot y)u-\tilde{\rho}(x)\tilde{\rho}(y)u-\tilde{\rho}(y\cdot x)u+\tilde{\rho}(y)\tilde{\rho}(x)u,
\end{eqnarray*}
which implies that
\begin{equation}\label{representation-1}
\tilde{\rho}([x,y]_C)=\tilde{\rho}(x)\circ\tilde{\rho}(y)-\tilde{\rho}(y)\circ\tilde{\rho}(x).
\end{equation}
Similarly, we have
\begin{equation}\label{representation-2}
\tilde{\mu}(y)\circ\tilde{\mu}(x)-\tilde{\mu}(x\cdot y)=\tilde{\mu}(y)\circ\tilde{\rho}(x)-\tilde{\rho}(x)\circ\tilde{\mu}(y).
\end{equation}

For all $x \in \g$, $u\in V$, we have
\begin{eqnarray*}\nonumber 0&=&\phi(x\diamond u)-\phi(x)\diamond u-x\diamond\phi(u)\\
&=&\phi(\tilde{\rho}(x)u)-(D(x)+\xi(x))\diamond u-x\diamond K(u)\\
&=&K(\tilde{\rho}(x)u)-\tilde{\rho}(D(x))u-\tilde{\rho}(x)K(u),
\end{eqnarray*}
which implies that
\begin{equation}\label{representation-3}
K(\tilde{\rho}(x)u)=\tilde{\rho}(x)K(u)+\tilde{\rho}(D(x))u.
\end{equation}
Similarly, we have
\begin{equation}\label{representation-4}
K(\tilde{\mu}(x)u)=\tilde{\mu}(x)K(u)+\tilde{\mu}(D(x))u.
\end{equation}
Thus, By \eqref{representation-1}, \eqref{representation-2}, \eqref{representation-3} and \eqref{representation-4}, we obtain that $(V,K,\tilde{\rho},\tilde{\mu})$ is a representation.

Let $s'$ be another section and $(V,K,\tilde{\rho}',\tilde{\mu}')$ the corresponding representation of the regular pre-LieDer pair $(\g,D)$. Since $s(x)-s'(x)\in V$, then we have
\begin{equation*}
\tilde{\rho}(x)u-\tilde{\rho}'(x)u=(s(x)-s'(x))\cdot_{\hat{\g}}u=0,
\end{equation*}
which implies that $\tilde{\rho}=\tilde{\rho}'$. Similarly, we have $\tilde{\mu}=\tilde{\mu}'$.
Thus, this representation is independent of the choice of sections.
\end{proof}
\begin{thm}\label{thm:cocycle}
With the above notation, $(\theta,\xi)$ is a $2$-cocycle of the regular pre-LieDer pair $(\g,D)$ with coefficients in the representation $(V,K,\tilde{\rho},\tilde{\mu})$.
\end{thm}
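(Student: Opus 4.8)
The plan is to show directly that $\huaD_{(\tilde{\rho},\tilde{\mu})}(\theta,\xi)=0$. By the explicit formula for the coboundary operator, $\huaD_{(\tilde{\rho},\tilde{\mu})}(\theta,\xi)=(\dM_{(\tilde{\rho},\tilde{\mu})}\theta,\ \dM_{(\tilde{\rho},\tilde{\mu})}\xi+\Omega_{(\tilde{\rho},\tilde{\mu})}\theta)$, so it suffices to establish the two identities $\dM_{(\tilde{\rho},\tilde{\mu})}\theta=0$ and $\dM_{(\tilde{\rho},\tilde{\mu})}\xi+\Omega_{(\tilde{\rho},\tilde{\mu})}\theta=0$. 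The key input is that, by the construction preceding Theorem \ref{thm:representation}, the transported structure $(\g\oplus V,\diamond,\phi)$ is a genuine regular pre-LieDer pair, being isomorphic to $(\hat{\g},\cdot_{\hat{\g}},\hat{D})$. Hence both the pre-Lie identity for $\diamond$ and the derivation compatibility $\phi(a\diamond b)=\phi(a)\diamond b+a\diamond\phi(b)$ hold on all of $\g\oplus V$, and I will extract the two cocycle conditions by evaluating these on elements of $\g$ and reading off the $V$-component.

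First I would prove $\dM_{(\tilde{\rho},\tilde{\mu})}\theta=0$. I evaluate the pre-Lie identity $(x\diamond y)\diamond z-x\diamond(y\diamond z)=(y\diamond x)\diamond z-y\diamond(x\diamond z)$ on $x,y,z\in\g$. Using $x\diamond y=x\cdot y+\theta(x,y)$ together with the mixed products $(g+v)\diamond z=g\cdot z+\theta(g,z)+\tilde{\mu}(z)v$ and $x\diamond(h+w)=x\cdot h+\theta(x,h)+\tilde{\rho}(x)w$, the $\g$-component merely reproduces the pre-Lie identity of $(\g,\cdot)$, which already holds, while the $V$-component yields exactly
$$
\tilde{\rho}(x)\theta(y,z)-\tilde{\rho}(y)\theta(x,z)+\tilde{\mu}(z)\theta(y,x)-\tilde{\mu}(z)\theta(x,y)+\theta(x,y\cdot z)-\theta(y,x\cdot z)-\theta(x\cdot y,z)+\theta(y\cdot x,z)=0.
$$
This is precisely $(\dM_{(\tilde{\rho},\tilde{\mu})}\theta)(x,y,z)=0$ after comparing with the degree-two instance of the pre-Lie coboundary formula, using $[x,y]_C=x\cdot y-y\cdot x$ to recombine the last four terms.

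Next I would prove $\dM_{(\tilde{\rho},\tilde{\mu})}\xi+\Omega_{(\tilde{\rho},\tilde{\mu})}\theta=0$. I evaluate the derivation condition $\phi(x\diamond y)=\phi(x)\diamond y+x\diamond\phi(y)$ on $x,y\in\g$, with $\phi(x+u)=D(x)+\xi(x)+K(u)$. The $\g$-component gives the derivation relation $D(x\cdot y)=D(x)\cdot y+x\cdot D(y)$ on $\g$, and the $V$-component gives
$$
\xi(x\cdot y)+K(\theta(x,y))=\theta(D(x),y)+\theta(x,D(y))+\tilde{\mu}(y)\xi(x)+\tilde{\rho}(x)\xi(y).
$$
Comparing the terms $\theta(D(x),y)+\theta(x,D(y))-K(\theta(x,y))$ with the definition of $\Omega_{(\tilde{\rho},\tilde{\mu})}\theta$ in the case $n=2$, where the prefactor $(-1)^{n-2}$ equals $+1$, and comparing $\tilde{\rho}(x)\xi(y)+\tilde{\mu}(y)\xi(x)-\xi(x\cdot y)$ with the degree-one pre-Lie coboundary $(\dM_{(\tilde{\rho},\tilde{\mu})}\xi)(x,y)$, this rearranges into exactly the claimed identity.

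Combining the two computations gives $\huaD_{(\tilde{\rho},\tilde{\mu})}(\theta,\xi)=(0,0)$, so $(\theta,\xi)$ is a $2$-cocycle. The main obstacle is purely organizational rather than conceptual: one must carefully expand $\diamond$ on the mixed $\g\oplus V$ arguments and track all signs so that the raw output of the pre-Lie identity and of the derivation condition lines up term-by-term with the intrinsic formulas for $\dM_{(\tilde{\rho},\tilde{\mu})}$ and $\Omega_{(\tilde{\rho},\tilde{\mu})}$. Once this dictionary is fixed, no deeper idea is required, and the independence of the choice of section established in Theorem \ref{thm:representation} ensures the resulting cohomology class is well posed.
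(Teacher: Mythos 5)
Your proposal is correct and follows essentially the same route as the paper: both extract $\dM_{(\tilde{\rho},\tilde{\mu})}\theta=0$ from the pre-Lie identity of $\diamond$ evaluated on $x,y,z\in\g$, and $\dM_{(\tilde{\rho},\tilde{\mu})}\xi+\Omega_{(\tilde{\rho},\tilde{\mu})}\theta=0$ from the derivation property of $\phi$, reading off the $V$-components. The only cosmetic difference is that the paper evaluates the derivation condition on general elements $x+u$, $y+v$ rather than restricting to $\g$, which changes nothing in the resulting identities.
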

\begin{proof}
For all $x,y,z\in \g$, by the definition of a pre-Lie algebra, we have
\begin{eqnarray*}0&=&(x\diamond y)\diamond z-x\diamond (y\diamond z)-(y\diamond x)\diamond z+y\diamond (x\diamond z)\\
&=&(x\cdot y+\theta(x,y)\big)\diamond z-x\diamond\big(y\cdot z +\theta(y,z)\big)-\big(y\cdot x+\theta(y,x)\big)\diamond z+y\diamond\big(x\cdot z +\theta(x,z))\\
&=&\theta(x\cdot y,z)+\tilde{\mu}(z)\theta(x,y)-\theta(x,y\cdot z)-\tilde{\rho}(x)\theta(y,z)\\
&&-\theta(y\cdot x,z)-\tilde{\mu}(z)\theta(y,x)+\theta(y,x\cdot z)+\tilde{\rho}(y)\theta(x,z)\\
&=&-\dM_{(\tilde{\rho},\tilde{\mu})}\theta(x,y)
\end{eqnarray*}
which implies that $\dM_{(\tilde{\rho},\tilde{\mu})}\theta=0.$
For all $x,y\in \g, u,v \in V$, we have
\begin{eqnarray*}0&=&\phi((x+u)\diamond (y+v))-\phi(x+u)\diamond (y+v)-(x+u)\diamond \phi(y+v)\\
&=&\phi(x\cdot y+\theta(x,y)+\tilde{\rho}(x)v+\tilde{\mu}(y)u)-(D(x)+\xi(x)+K(u))\diamond (y+v)-(x+u)\diamond (D(y)+\xi(y)+K(v))\\
&=&K(\theta(x,y))+\xi(x\cdot y)-\tilde{\mu}(y)\xi(x)-\theta(D(x),y)-\tilde{\rho}(x)\xi(y)-\theta(x,D(y))\\
&=&-(\dM_{(\tilde{\rho},\tilde{\mu})}\xi+\Omega_{(\tilde{\rho},\tilde{\mu})}\theta)(x,y),
\end{eqnarray*}
which implies that $\dM_{(\tilde{\rho},\tilde{\mu})}\xi+\Omega_{(\tilde{\rho},\tilde{\mu})}\theta=0.$

Thus, we obtain that $\huaD_{(\tilde{\rho},\tilde{\mu})}(\theta,\xi)=0$, which implies that $(\theta,\xi)$ is a $2$-cocycle of the the regular pre-LieDer pair $(\g,D)$ with coefficients in the representation $(V,K,\tilde{\rho},\tilde{\mu})$. The proof is finished.
\end{proof}
\begin{defi}\label{defi:isomorphic}
Let $(\hat{\g}_1,\cdot_{\hat{\g}_1},D_{\hat{\g}_1})$ and $(\hat{\g}_2,\cdot_{\hat{\g}_2},D_{\hat{\g}_2})$ be two abelian extensions of a regular pre-LieDer pair $(\g,D)$ by  $(V,K)$. They are said to be {\bf isomorphic} if there exists a regular pre-LieDer pair isomorphism $\zeta:(\hat{\g}_1,\cdot_{\hat{\g}_1},D_{\hat{\g}_1})\longrightarrow (\hat{\g}_2,\cdot_{\hat{\g}_2},D_{\hat{\g}_2})$, such that the following diagram is commutative:
$$\xymatrix{
  0 \ar[r] &V\ar @{=}[d]\ar[r]^{\iota_1}& \hat{\g}_1\ar[d]_{\zeta}\ar[r]^{p_1}&\g\ar @{=}[d]\ar[r]&0\\
     0\ar[r] &V\ar[r]^{ \iota_2} &\hat{\g}_2\ar[r]^{p_2} &\g\ar[r]&0.              }$$
\end{defi}

\begin{lem}
Let $(\hat{\g}_1,\cdot_{\hat{\g}_1},D_{\hat{\g}_1})$ and $(\hat{\g}_2,\cdot_{\hat{\g}_2},D_{\hat{\g}_2})$ be two isomorphic abelian extensions of a regular pre-LieDer pair $(\g,D)$ by $(V,K)$. Then they are give rise to the same representation of $(\g,D)$.
\end{lem}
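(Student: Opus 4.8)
The plan is to exploit the freedom in the choice of section together with the independence statement in Theorem \ref{thm:representation}. Since the representation attached to an abelian extension does not depend on the chosen section, I am free to pick on each side a section that is adapted to the isomorphism $\zeta$. Concretely, I would first fix an arbitrary section $s_1\colon\g\longrightarrow\hat{\g}_1$ of $p_1$ and then set $s_2:=\zeta\circ s_1$. Using the commutativity $p_2\circ\zeta=p_1$ of the right-hand square of the defining diagram, one checks $p_2\circ s_2=p_2\circ\zeta\circ s_1=p_1\circ s_1=\Id_{\g}$, so that $s_2$ is a genuine section of $p_2$. This reduces the statement to comparing the two representations computed from these two matched sections.

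The second step is to transport the products across $\zeta$. Commutativity of the left-hand square gives $\zeta\circ\iota_1=\iota_2$, so $\zeta$ carries the copy $\iota_1(V)\subset\hat{\g}_1$ identically onto $\iota_2(V)\subset\hat{\g}_2$; that is, $\zeta(\iota_1(u))=\iota_2(u)$ for all $u\in V$. As $\zeta$ is in particular a pre-Lie algebra isomorphism, it intertwines the two products, $\zeta(a\cdot_{\hat{\g}_1}b)=\zeta(a)\cdot_{\hat{\g}_2}\zeta(b)$. Applying this with $a=s_1(x)$ and $b=\iota_1(u)$, and recalling the defining relation $\iota_1(\tilde{\rho}_1(x)u)=s_1(x)\cdot_{\hat{\g}_1}\iota_1(u)$, I would compute $\iota_2(\tilde{\rho}_2(x)u)=s_2(x)\cdot_{\hat{\g}_2}\iota_2(u)=\zeta\big(s_1(x)\cdot_{\hat{\g}_1}\iota_1(u)\big)=\zeta(\iota_1(\tilde{\rho}_1(x)u))=\iota_2(\tilde{\rho}_1(x)u)$. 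Injectivity of $\iota_2$ then yields $\tilde{\rho}_2=\tilde{\rho}_1$, and the identical computation with the arguments reversed, using $\tilde{\mu}(x)u=u\cdot_{\hat{\g}}s(x)$, gives $\tilde{\mu}_2=\tilde{\mu}_1$. Since both extensions are extensions by the same pair $(V,K)$, the map $K$ is common to both, so the representations $(V,K,\tilde{\rho}_1,\tilde{\mu}_1)$ and $(V,K,\tilde{\rho}_2,\tilde{\mu}_2)$ coincide.

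Finally, I would invoke Theorem \ref{thm:representation} once more to remove the dependence on the particular sections: the representation arising from each extension is independent of the section, so the equality obtained for the matched pair $(s_1,\,s_2=\zeta\circ s_1)$ is an equality of the canonically associated representations. I do not anticipate a real obstacle in this argument; the only point requiring care is the bookkeeping of the two embeddings $\iota_1,\iota_2$ and the induced identification $\zeta|_{V}=\Id_V$, so that the phrase ``the same representation'' is read correctly as equality of $\tilde{\rho}$, $\tilde{\mu}$ and $K$ on $V$ after these identifications.
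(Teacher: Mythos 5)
Your argument is correct and is essentially the paper's own proof in mirror image: the paper pulls a section of $\hat{\g}_2$ back to $\hat{\g}_1$ via $s_1'=\zeta^{-1}\circ s_2$ and computes $\tilde{\rho}_1$ there, while you push $s_1$ forward via $s_2=\zeta\circ s_1$ and compute $\tilde{\rho}_2$; both versions rest on the same two ingredients, namely that $\zeta$ intertwines the products and restricts to the identity on $V$, and that Theorem \ref{thm:representation} makes the representation independent of the chosen section. No gap.
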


\begin{proof}
Let $s_1:{\g}_1\longrightarrow \hat{\g}_1$ and $s_2:{\g}_2\longrightarrow \hat{\g}_2$ be two sections of $(\hat{\g}_1,\cdot_{\hat{\g}_1},D_{\hat{\g}_1})$ and $(\hat{\g}_2,\cdot_{\hat{\g}_2},D_{\hat{\g}_2})$ respectively. By Theorem \ref{thm:representation}, we obtain that $(V,K,\tilde{\rho}_1,\tilde{\mu}_1)$ and $(V,K,\tilde{\rho}_2,\tilde{\mu}_2)$ are their representations respectively. Define $s'_1:{\g}_1\longrightarrow \hat{\g}_1$ by $s'_1=\zeta^{-1}\circ s_2$. Since $\zeta:(\hat{\g}_1,\cdot_{\hat{\g}_1},D_{\hat{\g}_1})\longrightarrow (\hat{\g}_2,\cdot_{\hat{\g}_2},D_{\hat{\g}_2})$ is a pre-LieDer pair isomorphism satisfying the commutative diagram in Definition \ref{defi:isomorphic}, by $p_2\circ \zeta=p_1$, we have
\begin{equation*}
p_1\circ s'_1=p_2\circ \zeta \circ \zeta^{-1}\circ s_2=\Id_{\g}.
\end{equation*}
Thus, we obtain that $s'_1$ is a section of $(\hat{\g}_1,\cdot_{\hat{\g}_1},D_{\hat{\g}_1})$. For all $x\in \g, u\in V$, we have
\begin{equation*}
\tilde{\rho}_1(x)(u)=s'_1(x) \cdot_{\hat{\g}_1} u=(\zeta^{-1}\circ s_2)(x)\cdot_{\hat{\g}_1} u=\zeta^{-1}(s_2(x) \cdot_{\hat{\g}_2} u)=\tilde{\rho}_2(x)(u),
\end{equation*}
which implies that $\tilde{\rho}_1=\tilde{\rho}_2$.
Similarly, we have $\tilde{\mu}_1=\tilde{\mu}_2$. This finishes the proof.
\end{proof}
So in the sequel, we fixed a representation $(V,K,\tilde{\rho},\tilde{\mu})$ of a regular pre-LieDer pair $(\g,D)$ and consider abelian extensions that induce the given representation.

\begin{thm}
Abelian extensions of a regular pre-LieDer pair $(\g,D)$ by $(V,K)$ are classified by the second cohomology group $H^2(\g,D;V,K,\tilde{\rho},\tilde{\mu})$.
\end{thm}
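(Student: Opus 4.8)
The plan is to set up a bijection between the set of isomorphism classes of abelian extensions of $(\g,D)$ by $(V,K)$ that induce the fixed representation $(V,K,\tilde{\rho},\tilde{\mu})$ and the group $H^2(\g,D;V,K,\tilde{\rho},\tilde{\mu})$. I would construct maps in both directions and then check that the two composites are identities; the only subtle point, saved for last, is that cohomologous cocycles correspond to isomorphic extensions.

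For the forward map, begin with an abelian extension $(\hat{\g},\hat{D})$ and choose a section $s$. Theorem \ref{thm:cocycle} already shows that the associated pair $(\theta,\xi)$ is a $2$-cocycle, hence determines a class in $H^2(\g,D;V,K,\tilde{\rho},\tilde{\mu})$. First I would verify independence of the section: if $s'$ is a second section, then $s(x)-s'(x)=b(x)$ for a linear map $b\colon\g\longrightarrow V$, and unwinding the transported structure on $\g\oplus V$ should give $(\theta,\xi)-(\theta',\xi')=\huaD_{(\tilde{\rho},\tilde{\mu})}(b)$, so the two cocycles are cohomologous. Next, isomorphic extensions yield the same class: an isomorphism $\zeta$ as in Definition \ref{defi:isomorphic} sends a section $s_2$ of the second extension to a section $\zeta^{-1}\circ s_2$ of the first, and since $\zeta$ restricts to the identity on $V$ and covers the identity on $\g$, the two cocycles coincide. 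Together these produce a well-defined map from isomorphism classes to $H^2(\g,D;V,K,\tilde{\rho},\tilde{\mu})$.

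For the backward map, given a $2$-cocycle $(\theta,\xi)$ I would equip $\g\oplus V$ with the product $\diamond$ and the linear map $\phi$ defined by
\begin{eqnarray*}
(x+u)\diamond (y+v)&=&x\cdot y+\theta(x,y)+\tilde{\rho}(x)(v)+\tilde{\mu}(y)(u),\\
\phi(x+u)&=&D(x)+\xi(x)+K(u),
\end{eqnarray*}
and check that $(\g\oplus V,\diamond,\phi)$ is a regular pre-LieDer pair. Here the pre-Lie identity for $\diamond$ unwinds precisely into $\dM_{(\tilde{\rho},\tilde{\mu})}\theta=0$ (together with the representation axioms for $(\tilde{\rho},\tilde{\mu})$), while the derivation property of $\phi$ unwinds into $\dM_{(\tilde{\rho},\tilde{\mu})}\xi+\Omega_{(\tilde{\rho},\tilde{\mu})}\theta=0$; these are exactly the two components of $\huaD_{(\tilde{\rho},\tilde{\mu})}(\theta,\xi)=0$. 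The inclusion of $V$ and the projection onto $\g$ then exhibit this as an abelian extension inducing $(V,K,\tilde{\rho},\tilde{\mu})$.

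Finally I would check that the two maps are mutually inverse. Starting from an extension, the structure equations above recover the transported structure on $\g\oplus V$ from its cocycle, so that composite is the identity on isomorphism classes; starting from a cocycle, reading off $(\theta,\xi)$ from the canonical section $x\mapsto x+0$ returns the same data. The remaining point is that \emph{cohomologous} cocycles give \emph{isomorphic} extensions: if $(\theta,\xi)-(\theta',\xi')=\huaD_{(\tilde{\rho},\tilde{\mu})}(b)$ for some $b\colon\g\longrightarrow V$, then $\zeta(x+u)=x+b(x)+u$ should be an isomorphism of extensions between the two constructed pre-LieDer pairs. I expect this last verification to be the main obstacle, since one must show simultaneously that $\zeta$ intertwines the two products $\diamond,\diamond'$ and the two maps $\phi,\phi'$: the $\dM_{(\tilde{\rho},\tilde{\mu})}$-component of the coboundary governs the product and the $\Omega_{(\tilde{\rho},\tilde{\mu})}$-component governs the derivation, and both must be used in tandem to close the argument.
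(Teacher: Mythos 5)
Your proposal is correct and follows essentially the same route as the paper: extract a $2$-cocycle from a section, check independence of the section and invariance under isomorphism, construct the extension $(\g\oplus V,\diamond,\phi)$ from a cocycle, and show cohomologous cocycles yield isomorphic extensions via $\zeta(x+u)=x+u+\varphi(x)$. The only difference is that you explicitly flag the verification that $(\g\oplus V,\diamond,\phi)$ is a pre-LieDer pair (the converse of Theorem \ref{thm:cocycle}), which the paper leaves implicit.
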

\begin{proof}
Let $(\hat{\g},\hat{D})$ be an abelian extension of a regular pre-LieDer pair $(\g,D)$ by $(V,K)$. Choosing a section $s:\g\longrightarrow \hat{\g}$, by Theorem \ref{thm:cocycle}, we obtain that $(\theta,\xi)\in Z^2(\g,D;V,K,\tilde{\rho},\tilde{\mu})$. Now we show that the cohomological class of $(\theta,\xi)$ does not depend on the choice of sections. In fact, let $s$ and $s'$ be two different sections. Define $\varphi:\g\longrightarrow V$ by $\varphi(x)=s(x)-s'(x)$. Then for all $x,y\in \g$, we have
\begin{eqnarray*}
 \theta(x,y)&=&s(x)\cdot_{\hat{\g}} s(y)-s(x\cdot y)\\
  &=&\big(s'(x)+\varphi(x)\big)\cdot_{\hat{\g}} \big(s'(y)+\varphi(y)\big)-s'(x\cdot y)-\varphi(x\cdot y)\\
  &=&s'(x)\cdot_{\hat{\g}} s'(y)+\tilde{\rho}(x)\varphi(y)+\tilde{\mu}(y)\varphi(x)-s'(x\cdot y)-\varphi(x\cdot y)\\
  &=&\theta'(x,y)+\dM_{(\tilde{\rho},\tilde{\mu})}\varphi(x,y),
\end{eqnarray*}
which implies that $\theta-\theta'=\dM_{(\tilde{\rho},\tilde{\mu})}\varphi$.
Similarly, we have $\xi-\xi'=\Omega_{(\tilde{\rho},\tilde{\mu})}\varphi$.

Therefore, we obtain that $(\theta-\theta',\xi-\xi')=(\dM_{(\tilde{\rho},\tilde{\mu})}\varphi,\Omega_{(\tilde{\rho},\tilde{\mu})}\varphi)=\huaD_{(\tilde{\rho},\tilde{\mu})}\varphi$, $(\theta,\xi)$ and $(\theta',\xi')$ are in the same cohomological class.

Now we go on to prove that isomorphic abelian extensions give rise to the same element in  $H^2(\g,D;V,K,\tilde{\rho},\tilde{\mu})$. Assume that $(\hat{\g}_1,\cdot_{\hat{\g}_1},D_{\hat{\g}_1})$ and $(\hat{\g}_2,\cdot_{\hat{\g}_2},D_{\hat{\g}_2})$ are two isomorphic abelian extensions of a regular pre-LieDer pair $(\g,D)$ by $(V,K)$, and $\zeta:(\hat{\g}_1,\cdot_{\hat{\g}_1},D_{\hat{\g}_1})\longrightarrow (\hat{\g}_2,\cdot_{\hat{\g}_2},D_{\hat{\g}_2})$ is a pre-LieDer pair isomorphism satisfying the commutative diagram in Definition \ref{defi:isomorphic}. Assume that $s_1:\g\longrightarrow \hat{\g}_1$ is a section of $\hat{\g}_1$. By $p_2\circ \zeta=p_1$, we have
\begin{equation*}
p_2\circ (\zeta\circ s_1)=p_1\circ s_1=\Id_{\g}.
\end{equation*}
Thus, we obtain that $\zeta\circ s_1$ is a section of $\hat{\g}_2$. Define $s_2=\zeta\circ s_1$. Since $\zeta$ is an isomorphism of pre-LieDer pair and $\zeta\mid_V=\Id_V$, for all $x,y\in \g$, we have
\begin{eqnarray*}
 \theta_2(x,y)&=&s_2(x)\cdot_{\hat{\g}_2} s_2(y)-s_2(x\cdot y)\\
  &=&(\zeta\circ s_1)(x)\cdot_{\hat{\g}_2}(\zeta\circ s_1)(y)-(\zeta\circ s_1)(x\cdot y)\\
  &=&\zeta\big(s_1(x)\cdot_{\hat{\g}_1} s_1(y)-s_1(x\cdot y)\big)\\
  &=&\theta_1(x,y),
\end{eqnarray*}
Similarly, we have $\xi_1=\xi_2$.
Thus, isomorphic abelian extensions gives rise to the same element in $H^2(\g,D;V,K,\tilde{\rho},\tilde{\mu})$.

Conversely, given two 2-cocycles $(\theta_1,\xi_1)$ and $(\theta_2,\xi_2)$, we can construct two abelian extensions $(\g\oplus V,\diamond_1,\phi_1)$ and $(\g\oplus V,\diamond_2,\phi_2)$. If  $(\theta_1,\xi_1), (\theta_2,\xi_2)\in H^2(\g,D;V,K,\tilde{\rho},\tilde{\mu})$, then there exists $\varphi:\g\longrightarrow V$, such that $\theta_1=\theta_2+\dM_{(\tilde{\rho},\tilde{\mu})}\varphi$ and $\xi_1=\xi_2+\Omega_{(\tilde{\rho},\tilde{\mu})}\varphi$. We define $\zeta:\g\oplus V\longrightarrow \g\oplus V$ by
\begin{equation*}
\zeta(x+u)=x+u+\varphi(x),\quad \forall ~x\in \g, u\in V.
\end{equation*}
For all $x,y\in \g, u,v\in V$, by $\theta_1=\theta_2+\dM_{(\tilde{\rho},\tilde{\mu})}\varphi$, we have
\begin{eqnarray*}&&\zeta\big((x+u)\diamond_1(y+v)\big)- \zeta(x+u)\diamond_2\zeta(y+v)\\
&=&\zeta\big(x\cdot y+\theta_1(x,y)+\tilde{\rho}(x)(v)+\tilde{\mu}(y)(u)\big)-\big(x+u+\varphi(x)\big)\diamond_2 \big(y+v+\varphi(y)\big)\\
&=&\theta_1(x,y)+\varphi(x\cdot y)-\theta_2(x,y)-\tilde{\rho}(x)\varphi(y)-\tilde{\mu}(y)\varphi(x)\\
&=&\theta_1(x,y)-\theta_2(x,y)-\dM_{(\tilde{\rho},\tilde{\mu})}\varphi(x,y)\\
&=&0,
\end{eqnarray*}
and for all $x\in \g, u\in V$, by $\xi_1=\xi_2+\Omega_{(\tilde{\rho},\tilde{\mu})}\varphi$, we have
\begin{eqnarray*}&&\zeta \circ\phi_1(x+u)-\phi_2\circ \zeta(x+u)\\
&=&\zeta\big(D(x)+\xi_1(x)+K(u)\big)-\phi_2\big(x+u+\varphi(x)\big)\\
&=&\xi_1(x)+\varphi(D(x))-\xi_2(x)-K(\varphi(x))\\
&=&\xi_1(x)-\xi_2(x)-\Omega_{(\tilde{\rho},\tilde{\mu})}\varphi(x)\\
&=&0,
\end{eqnarray*}
Thus, $\zeta$ is a pre-LieDer pair isomorphism from $(\g\oplus V,\diamond_1,\phi_1)$ to $(\g\oplus V,\diamond_2,\phi_2)$. Moreover, it is obvious that the diagram in Definition \ref{defi:isomorphic} is commutative. This finishes the proof.
\end{proof}

\noindent{\bf Acknowledgement:}  This work is  supported by  NSF of Jilin Province (No. YDZJ202201ZYTS589), NNSF of China (Nos. 12271085, 12071405) and the Fundamental Research Funds for the Central Universities.


\begin{thebibliography}{999}
\bibitem{VA1} V. Ayala, E. Kizil and I. de Azevedo Tribuzy, On an algorithm for finding derivations of Lie algebras. \emph{ Proyecciones.} 31 (2012), 81-90.

\bibitem{VA2} V. Ayala and J. Tirao, Linear control systems on Lie groups and controllability, in: Differential Geometry and Control. Boulder, CO, 1997, in: Proc. Sympos. Pure Math. vol. 64, Amer. Math. Soc., Providence, RI, 1999, pp. 47-64.

\bibitem{BD} D. Burde, Left-symmetric algebras and pre-Lie algebras in geometry and physics. {\em Cent. Eur. J. Math.} 4 (2006), 323-357.

\bibitem{Bai}C. Bai, Left-symmetric Bialgebras and Analogue of the Classical Yang-Baxter Equation. \emph{ Commun. Contemp. Math. } 10 (2008), no. 2, 221-260.

\bibitem{Bai2}C. Bai, An introduction to pre-Lie algebras. {in Algebra and Applications 1, coordinated by A. Makhlouf}, 243-273,  ISTE-Wiley, 2020.

\bibitem{AFB1} A. F. Ber, V. I. Chilin and F. A. Sukochev, Continuous derivations on algebras of locally measurable operators are inner. \emph{ Proc. Lond. Math. Soc.} 109 (2014), 65-89.

\bibitem{AFB2} A. F. Ber, B. de Pagter and F. A. Sukochev, Derivations in algebras of operator-valued functions.  \emph{ J. Operator Theory.} 66 (2011), 261-300.

\bibitem{IAB} I.A. Batalin and G.A. Vilkovisky, Gauge algebra and quantization.  \emph{ Phys. Lett. B.} 102 (1981) 27-31.

\bibitem{CK} A. Connes and D. Kreimer, Hopf algebras, Renormalization and Noncommutative Geometry. \emph{Comm. Math. Phys.} 199 (1998), 203-242.

\bibitem{VEC} V.E. Coll, M. Gerstenhaber and A. Giaquinto, An explicit deformation formula with noncommuting derivations. \emph{ Ring theory 1989 (Ramat Gan and Jerusalem, 1988/1989),} 396-403.

\bibitem{FC} F. Chapoton and M. Livernet, Pre-Lie algebras and the rooted trees operad. \emph{ Int. Math. Res. Not.} 8 (2001), 395-408.

\bibitem{AD1} A. Das, Extensions, deformation and categorification of AssDer pairs, arXiv:2002.11415.

\bibitem{AD2} A. Das, Leibniz algebras with derivations.  \emph{ J. Homotopy Relat. Struct.} 16 (2021), 245-274.

\bibitem{MD} M. Doubek, and T. Lada, Homotopy derivations.  \emph{ J. Homotopy Relat. Struc.} 11 (2016), 599-630.

\bibitem{DA} A. Dzhumadil'daev, Cohomologies and deformations of right-symmetric algebras. \emph{J. Math. Sci.} 93 (1999), no. 6, 836-876.

\bibitem{DSV} V. Dotsenko, S. Shadrin and B. Vallette, The twisting procedure. arXiv:1810.02941. To appear in the \emph{London Mathematical Society.} Lecture Note Series (Cambridge University Press).

\bibitem{VAD} V. A. Dolgushev and C. L. Rogers, A version of the Goldman-Millson Theorem for filtered $l_{\infty}$-algebras.  \emph{J. Algebra.} 430 (2015), 260-302.

\bibitem{YF} Y. Fr$\acute{e}$gier and M. Zambon, Simultaneous deformations and Poisson geometry. \emph{Compos. Math.} 151 (2015), 1763-1790.

\bibitem{Get} E. Getzler, Lie theory for nilpotent $L_{\infty}$-algebras. \emph{Ann. Math. (2)} 170 (2009), 271-301.

\bibitem{Gerstenhaber63}M. Gerstenhaber, The cohomology structure of an associative ring. \emph{ Ann. of Math.} 78 (1963), 267-288.

\bibitem{Jiang1} J. Jiang and Y. Sheng, Deformations, cohomologies and integrations of relative difference Lie algebras. \emph{ J. Algebra.} 614 (2023), 535-563.

\bibitem{Jiang2} J. Jiang and Y. Sheng, Representations and cohomologies of relative Rota-Baxter Lie algebras and applications. \emph{ J. Algebra.} 602 (2022), 637-670.

\bibitem{JLK} J. L. Koszul, Domaines born$\acute{e}$s homog$\grave{e}$nes et orbites de groupes de transformations affines. \emph{ Ann. of Math.} 78 (1963), 267-288.

\bibitem{LA} A. Lazarev, Y. Sheng and R. Tang, Deformations and homotopy theory of relative Rota-Baxter Lie algebras. \emph{ Comm. Math. Phys.} 383 (2021), no. 1, 595-631.

\bibitem{LT} T. Lada and J. Stasheff, Introduction to sh Lie algebras for physicists. \emph{ Internat. J. Theoret. Phys.} 32 (1993), 1087-1103.

\bibitem{LT1} T. Lada and M. Markl, Strongly homotopy Lie algebras. \emph{ Commun. Algebra.} 23 (1995), 2147-2161.

\bibitem{Liu} J. Liu, Twisting on pre-Lie algebras and quasi-pre-Lie bialgebras. arXiv:2003.11926v2.

\bibitem{Liu1} J. Liu and Q. Wang, Pre-Lie analogues of Poisson-Nijenhuis structures and Maurer-Cartan Equations.  \emph{ J. Algebra Appl.} 21 (2022), no. 6, Paper No. 2250120, 34 pp.

\bibitem{LJ} L. J. Loday, On the operad of associative algebras with derivation. \emph{ Georgian Math. J.} 17 (2010), 347-372.

\bibitem{ARM} A.R. Magid, Lectures on differential Galois theory, University Lecture Series. \emph{ Amer. Math. Soc.} 7 (1994), xiii+105 pp.

\bibitem{AN} A. Nijenhuis, Sur une classe de proprits communes quelques types differents d'algebres. (French) \emph{ En-seignement Math.} (2) 14 (1968), 225-277.

\bibitem{JS} J. Stasheff, Differential graded Lie algebras, quasi-Hopf algebras and higher homotopy algebras. \emph{ Quantum groups.} (Leningrad, 1990), 120-137.
Lecture Notes in Math, 1510. Springer, Berlin, 1992.

\bibitem{TR} R. Tang, Y. Fr$\acute{e}$gier and Y. Sheng, Cohomologies of a Lie algebra with a derivation and applications. \emph{ J. Algebra.} 534 (2019), 65-99.

\bibitem{Vo} T. T. Voronov, Higher derived brackets and homotopy algebras. \emph{ J. Pure Appl. Algebra} 202 (2005), 133-153.

\bibitem{TTV} T. T. Voronov, Higher derived brackets for arbitrary derivations, in: Travaux math$\acute{e}$matiques. Fasc. XVI, in: Trav. Math., vol. 16, Univ. Luxemb, Luxembourg, 2005, pp. 163-186.

\bibitem{Vinberg}  E. B. Vinberg, The theory of homogeneous convex cones. \emph{ Bull. soc. Math. France.} 89 (1961), 515-533.

\bibitem{QW} Q. Wang, Y. Sheng, C. Bai and J. Liu, Nijenhuis operators on pre-Lie algebras. \emph{ Commun. Contemp. Math.} 7 (2019), 1850050.

\end{thebibliography}
 \end{document}